\numberwithin{equation}{section}
\newtheorem{theorem}{Theorem}[section]
\newtheorem{corollary}[theorem]{Corollary}
\newtheorem{lemma}[theorem]{Lemma}
\newtheorem{proposition}[theorem]{Proposition}
\theoremstyle{definition}
\newtheorem{definition}[theorem]{Definition}
\newtheorem{remark}[theorem]{Remark}
\newtheorem{example}[theorem]{Example}
\newtheorem*{notation}{Notation}
\newcommand{\Mod}{\mathsf{Mod}}
\newcommand{\U}[1]{\underline{#1}}
\newcommand{\add}{\operatorname{\mathsf{add}}\nolimits}
\newcommand{\End}{\operatorname{End}\nolimits}
\newcommand{\Ext}{\operatorname{Ext}\nolimits}
\newcommand{\Hom}{\operatorname{Hom}\nolimits}
\newcommand{\Ho}{{\rm H}}
\renewcommand{\Im}{\operatorname{Im}\nolimits}
\newcommand{\kD}{\mathrm{D}}
\newcommand{\Ker}{\operatorname{Ker}\nolimits}
\newcommand{\Cok}{\operatorname{Cok}\nolimits}
\renewcommand{\mod}{\mathsf{mod}}
\newcommand{\smod}{\operatorname{\underline{\mathsf{mod}}}\nolimits}
\newcommand{\thick}{\operatorname{\mathsf{thick}}\nolimits}
\newcommand{\pd}{\operatorname{proj.dim}\nolimits}
\newcommand{\gl}{\operatorname{gl.dim}\nolimits}
\newcommand{\supp}{\operatorname{Supp}\nolimits}
\newcommand{\proj}{\operatorname{\mathsf{proj}}\nolimits}
\newcommand{\xto}[1]{\xrightarrow{#1}}
\newcommand{\mb}[1]{\mathbb{#1}}
\newcommand{\ms}[1]{\mathsf{#1}}
\newcommand{\mc}[1]{\mathcal{#1}}
\begin{document}
\title[Singularity categories of derived categories of hereditary algebras]{Singularity categories of derived categories of hereditary algebras are derived categories}
\author[Y. Kimura]{Yuta Kimura}
\address{Graduate School of Mathematics, Nagoya University, Frocho, Chikusaku, Nagoya, 464-8602, Japan}
\email{m13025a@math.nagoya-u.ac.jp}
\date{\today}
\begin{abstract}
We show that for the path algebra $A$ of an acyclic quiver, the singularity category of the derived category $\ms{D}^{\rm b}(\mod A)$ is triangle equivalent to the derived category of the functor category of $\smod A$, that is, $\ms{D}_{\rm sg}(\ms{D}^{\rm b}(\mod A))\simeq \ms{D}^{\rm b}(\mod(\smod A))$.
This extends a result in \cite{IO} for the path algebra $A$ of a Dynkin quiver.
An important step is to establish a functor category analog of Happel's triangle equivalence for repetitive algebras.
\end{abstract}
\maketitle
\section{introduction}
Let $k$ be a field and $A$ be a finite dimensional $k$-algebra.
In \cite{IO}, it was shown that if $A$ is a representation finite hereditary algebra, then there exists a triangle equivalence
\begin{align}\label{intro-equ-IO}
\smod\ms{D}^{\rm b}(\mod A)\simeq \ms{D}^{\rm b}(\mod\,B),
\end{align}
where $B$ is the stable Auslander algebra of $A$, $\mod\ms{D}^{\rm b}(\mod A)$ is the Frobenius category of finitely presented functors from $\ms{D}^{\rm b}(\mod A)$ to $\mc{A}b$, and $\smod\ms{D}^{\rm b}(\mod A)$ is its stable category.

In this paper, we extend a triangle equivalence (\ref{intro-equ-IO}) to the case when $A$ is a representation infinite hereditary algebra.
In this case, the role of the stable Auslander algebra is played by the category $\mod(\smod A)$ of finitely presented functors from the stable category $\smod A$ to $\mc{A}b$.
Our main result is the following.

\begin{theorem}[Theorem \ref{thm-hered-stable-derived}]\label{intro-thm-aim}
Let $A$ be a hereditary algebra.
We have a triangle equivalence
\begin{align}\label{intro-equ-main}
\smod\ms{D}^{\rm b}(\mod A)\simeq\ms{D}^{\rm b}(\mod(\smod A)).
\end{align}
\end{theorem}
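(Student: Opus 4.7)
The plan is to follow the strategy indicated by the abstract: establish a functor-category analog of Happel's equivalence for repetitive algebras and apply it to $\mc{C}:=\smod A$. Write $\mc{T}:=\ms{D}^{\rm b}(\mod A)$ throughout. Since $A$ is hereditary, every object of $\mc{T}$ decomposes as $\bigoplus_{n\in\mb{Z}} M_n[n]$ with $M_n\in\mod A$, and $\Hom_{\mc{T}}(M[n],N[m])$ vanishes unless $m-n\in\{0,1\}$, equalling $\Hom_A(M,N)$ in the first case and $\Ext^1_A(M,N)$ in the second. Thus $\mc{T}$ is a ``$\mb{Z}$-graded'' enrichment of $\mod A$ with $\Ext^1$-links between adjacent degrees, which is precisely the shape of input one needs for a repetitive construction at the functor-category level.

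Since $\mc{T}$ has Auslander--Reiten triangles, $\mod\mc{T}$ is a Frobenius category whose projective-injective objects are the representables $\mc{T}(-,X)$. Using the grading above, I would identify $\mod\mc{T}$ with $\mod\hat{\mc{C}}$ for a suitable repetitive category $\hat{\mc{C}}$ built from $\mc{C}=\smod A$: its indecomposables should be $\mb{Z}$-indexed copies of those of $\mc{C}$, with $\mc{C}$-morphisms within each degree and $\Ext^1$-induced morphisms linking adjacent degrees. A finitely presented functor on $\mc{T}$ is determined by its restriction to each piece $\mod A[n]\simeq\mod A$ together with coherent $\Ext^1$-action data; killing the representables on each $\mod A[n]$ should turn this into a finitely presented $\hat{\mc{C}}$-module, and the identification should match the Frobenius structures on both sides.

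The central step is the functor-category analog of Happel's theorem:
\begin{align}
\smod\hat{\mc{C}}\simeq\ms{D}^{\rm b}(\mod\mc{C}).
\end{align}
I would mimic Happel's argument: using projective resolutions in $\mod\mc{C}$ and the Nakayama-type shift of $\hat{\mc{C}}$, construct a candidate triangle functor $\ms{D}^{\rm b}(\mod\mc{C})\to\smod\hat{\mc{C}}$, check that it is triangulated, and prove full faithfulness by reducing through truncation to objects concentrated in a single degree; denseness then follows by verifying that standard generators of $\smod\hat{\mc{C}}$ lie in the image. Combining with the previous step yields
\begin{align}
\smod\mc{T}\simeq\smod\hat{\mc{C}}\simeq\ms{D}^{\rm b}(\mod\mc{C})=\ms{D}^{\rm b}(\mod(\smod A)),
\end{align}
which is the required equivalence.

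The main obstacle I expect is this functor-categorical Happel equivalence. The classical argument assumes the base to be a finite-dimensional algebra of finite global dimension, whereas $\mc{C}=\smod A$ is merely a Hom-finite $k$-linear category of potentially infinite type, and its global dimension has to be carefully controlled so that projective resolutions in $\mod\mc{C}$ interact correctly with the repetitive structure. Getting the precise definition of $\hat{\mc{C}}$ right so that the identification $\mod\mc{T}\simeq\mod\hat{\mc{C}}$ respects projective-injectives is also delicate, but once that definition is in hand it becomes a structural bookkeeping check rather than a deep obstacle; the remaining ingredients---existence of AR triangles in $\mc{T}$, the Frobenius property of $\mod\mc{T}$, and the passage between $\mod A$ and $\smod A$---should follow from standard Auslander--Reiten theory for hereditary algebras.
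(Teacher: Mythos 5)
Your high-level strategy --- realize $\smod\ms{D}^{\rm b}(\mod A)$ as the stable module category of a repetitive-type category built from $\smod A$, then apply a functor-category Happel equivalence --- matches the paper's route exactly. But the concrete identification you sketch for the first step would not go through as written, and it is precisely there that the paper's essential new idea lies.

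The issue is the decomposition. You decompose $\mc{T}=\ms{D}^{\rm b}(\mod A)$ by cohomological degree, $\bigvee_n(\mod A)[n]$, and note that the linking morphisms are $\Ext^1_A(M,N)$. But the repetitive category $\ms{R}(\smod A)$ has linking space $\kD\,\underline{\Hom}_A(Y,X)$ from $(X,i)$ to $(Y,i+1)$, which by Auslander--Reiten duality equals $\Ext^1_A(X,\tau Y)$ --- not $\Ext^1_A$ on the nose. The twist by $\tau$ means the naive shift decomposition does not produce the repetitive structure; repairing this amounts to reconstructing the Serre functor, and is the real content, not ``structural bookkeeping''. The paper cuts through this by decomposing $\mc{T}$ along Serre-functor powers instead: $\Phi(X,i):=\mb{S}^i(X)$ for $X\in\mod_{\rm p}A\simeq\smod A$. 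Since $\mb{S}$ is a Serre functor, $\Hom_{\mc{T}}(\mb{S}^iX,\mb{S}^jY)$ is $\Hom_A(X,Y)$, $\kD\Hom_A(Y,X)$, or $0$ according as $j-i=0,1$, or else --- literally the defining Hom-spaces of $\ms{R}(\smod A)$ --- so $\Phi$ is fully faithful with no further argument, and what remains is density, which the paper establishes using the preprojective/regular/preinjective trichotomy. That is where the representation-infinite hypothesis enters (the representation-finite case being Iyama--Oppermann's earlier result, which your proposal does not distinguish). Note also that the paper proves the stronger statement $\mc{T}\simeq\ms{R}(\smod A)$ as additive categories, from which $\mod\mc{T}\simeq\mod\ms{R}(\smod A)$ with matching Frobenius structures follows automatically; your proposal works one level up, at the module categories, via ``coherent $\Ext^1$-action data,'' which is both vaguer and harder.

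You correctly flag that the functor-category Happel equivalence needs hypotheses on $\mc{C}=\smod A$, but a proof must actually discharge them: $\mc{C}$ must be a dualizing $k$-variety (so $\ms{R}\mc{C}$ is one and $\mod\ms{R}\mc{C}$ is Frobenius) and every object of $\mod\mc{C}$ and $\mod\mc{C}^{\rm op}$ must have finite projective dimension. Both follow from Auslander--Reiten's results: $\smod A$ is a dualizing $k$-variety, and $\gl(\mod(\smod A))\le 3\cdot\gl(\mod A)-1=2$. Without these inputs your ``central step'' remains a conjecture rather than a theorem.
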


Note that for a triangulated category $\mc{T}$, the stable category $\smod\mc{T}$ is triangle equivalent to the singularity category $\ms{D}_{\rm sg}(\mc{T})=\ms{D}^{\rm b}(\mod\mc{T})/\ms{K}^{\rm b}(\proj\mc{T})$ \cite{Buchweitz, Orlov} (see Theorem \ref{rickard-theorem}).
Thus (\ref{intro-equ-main}) can be rewritten as $\ms{D}_{\rm sg}(\ms{D}^{\rm b}(\mod A))\simeq\ms{D}^{\rm b}(\mod(\smod A))$.

To prove Theorem \ref{intro-thm-aim}, we need to give general preliminary results on functor categories and repetitive categories.
The functor category $\mod(\smod A)$ is an abelian category with enough projectives and enough injectives since the category $\smod A$ forms a dualizing $k$-variety, which is a distinguished class of $k$-linear categories introduced by Auslander and Reiten \cite{AR74}.
A key role is played by the repetitive category $\ms{R}(\smod A)$ of $\smod A$.
The following our first result implies that $\ms{R}(\smod A)$ is a dualizing $k$-variety.

\begin{theorem}[Theorem \ref{repetitive-dualizing}]\label{into-thm-repetitive-dualizng}
Let $\mc{A}$ be a dualizing $k$-variety.
Then $\ms{R}\mc{A}$ is a dualizing $k$-variety.
\end{theorem}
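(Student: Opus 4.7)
The plan is to reduce the dualizing property of $\ms{R}\mc{A}$ to that of $\mc{A}$ by analyzing everything degree-by-degree, mimicking the passage from a finite-dimensional algebra $A$ to its Happel repetitive algebra. First I would record the elementary properties of $\ms{R}\mc{A}$ that do not use the dualizing hypothesis. Indecomposables are pairs $(X,i)$ with $X$ indecomposable in $\mc{A}$ and $i\in\mathbb{Z}$, and the only nonzero morphism spaces are
\[
\Hom_{\ms{R}\mc{A}}((X,i),(Y,i))=\Hom_{\mc{A}}(X,Y),\qquad \Hom_{\ms{R}\mc{A}}((X,i),(Y,i+1))=\kD\Hom_{\mc{A}}(Y,X).
\]
Since $\mc{A}$ is $\Hom$-finite and Krull-Schmidt and since $k$-duality preserves finite dimension, $\ms{R}\mc{A}$ inherits both properties. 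A symmetric feature of the construction gives a canonical identification $(\ms{R}\mc{A})^{\rm op}\cong\ms{R}(\mc{A}^{\rm op})$, which will be the target of the duality.

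Second, I would describe $\mod\ms{R}\mc{A}$ degreewise. A representable $\Hom_{\ms{R}\mc{A}}(-,(X,i))$ is supported only in degrees $i-1$ and $i$, hence every finitely presented module $M$ is concentrated in a bounded range of degrees, and in each degree $j$ its restriction $M_j$ to the copy of $\mc{A}$ at degree $j$ is a finitely presented $\mc{A}$-module. The datum $M$ is equivalent to a finite sequence $(M_j)$ of finitely presented $\mc{A}$-modules glued by structure maps dictated by the bimodule $\kD\mc{A}$.

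Third, and this is the main step, I construct the duality $\kD\colon\mod\ms{R}\mc{A}\to\mod(\ms{R}\mc{A})^{\rm op}$. Pointwise $(\kD M)(x):=\kD(M(x))$ is manifestly contravariant and $k$-linear, and $\kD\circ\kD\cong\mathrm{id}$ pointwise, so the essential content is that $\kD M$ is finitely presented over $(\ms{R}\mc{A})^{\rm op}\cong\ms{R}(\mc{A}^{\rm op})$. This is the main obstacle and the only place where the dualizing hypothesis on $\mc{A}$ genuinely enters. I would obtain finite presentability by restricting a finite projective presentation of $M$ to each degree $j$, applying the dualizing equivalence of $\mc{A}$ to each of the finitely many nonzero $M_j$, and then reassembling the resulting presentations with the opposite grading into a finite presentation of $\kD M$ over $\ms{R}(\mc{A}^{\rm op})$. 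The analogous construction applied to $(\ms{R}\mc{A})^{\rm op}$ yields the inverse duality, completing the proof.
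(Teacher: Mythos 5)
Your plan is a different organization of essentially the same degreewise analysis, and it can be made to work, but the description of the key step hides exactly where the real work lies. The paper does not construct the duality on $\mod\ms{R}\mc{A}$ directly: it first proves that $\ms{R}\mc{A}$ and $(\ms{R}\mc{A})^{\rm op}$ have weak kernels (using the degreewise restriction functors $\rho_j$, the dualizing property of each $\mc{A}_j$, and a filtration of a bounded-support module by its degreewise pieces), and then invokes Proposition~\ref{dualizing-Serre-Frobenius}: an idempotent complete, $\Hom$-finite category with weak kernels on both sides and a Serre functor is automatically a dualizing $k$-variety. The Serre functor of $\ms{R}\mc{A}$ (the shift $(X,i)\mapsto(X,i+1)$) is what makes $\kD$ carry $\proj\ms{R}\mc{A}$ to $\proj(\ms{R}\mc{A})^{\rm op}$, so that $\kD M$ is a kernel of projectives and is finitely presented once weak kernels are known. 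Your proposal instead attacks the duality head-on, which is reasonable, but the two issues below need addressing.

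First, restricting a projective presentation $P_1\to P_0\to M\to 0$ of $M$ to degree $j$ does \emph{not} give a projective presentation of $M_j=\rho_j(M)$: by the analogue of Lemma~\ref{embedding-composition}(b), $\rho_j(\ms{R}\mc{A}(-,(X,i)))$ is $\mc{A}_j(-,X)$ when $j=i$ and $\kD\mc{A}_j(X,-)$ when $j=i-1$, and the latter is not projective over $\mc{A}_j$. One must first argue that $\rho_j(P_i)$ is finitely presented (this uses the dualizing hypothesis, which gives $\kD\mc{A}(X,-)\in\mod_1\mc{A}$) and that $\mc{A}_j$ has weak kernels (again from the dualizing hypothesis), to conclude $M_j\in\mod_1\mc{A}_j$. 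Second, and more seriously, ``reassembling the resulting presentations with the opposite grading'' is not a matter of splicing: a representable projective over $\ms{R}(\mc{A}^{\rm op})$ is supported in two adjacent degrees, so degreewise presentations of the $\kD M_j$ cannot simply be stacked. The correct mechanism is the one the paper isolates: $\kD M$ has bounded support, hence admits a finite filtration whose subquotients are the degreewise pieces $\kD M_j$ (Lemma~\ref{filtration-module}(b)); each $\kD M_j$ is finitely presented over $\ms{R}(\mc{A}^{\rm op})$ because finitely presented $\mc{A}_j^{\rm op}$-modules remain finitely presented over the repetitive category (Lemma~\ref{modA-to-modRA}); and $\mod_1$ is closed under extensions (Lemma~\ref{cap-modnA-modinfA}(a)). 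Without spelling out this filtration argument your ``reassembling'' is a gap, since it is precisely the point where the theorem could fail. With it, your approach works and is a legitimate alternative to the paper's Serre-functor criterion.
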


In particular, $\mod\ms{R}\mc{A}$ is a Frobenius abelian category for any dualizing $k$-variety $\mc{A}$.
We denote by $\smod\ms{R}\mc{A}$ the stable category of $\mod\ms{R}\mc{A}$, which is triangulated.

In the case where $A$ is a representation finite hereditary algebra,
the following Happel's theorem \cite{H} played an important role in the proof of a triangle equivalence (\ref{intro-equ-IO}):
for a finite dimensional $k$-algebra $A$ of finite global dimension, the bounded derived category of $A$ is triangle equivalent to the stable category of the repetitive algebra of $A$.
In Section \ref{section-repetitive}, we show a categorical analog of this triangle equivalence for dualizing $k$-varieties.
In fact, we deal with the following more general class of categories including dualizing $k$-varieties.
For a $k$-linear additive category $\mc{A}$, we denote by $\proj\mc{A}$ the category of finitely generated projective $\mc{A}$-modules and by $\mod\mc{A}$ the category of $\mc{A}$-modules having resolutions by $\proj\mc{A}$.
We consider the following conditions: 
\begin{itemize}
\setlength{\itemsep}{-10pt}
\item[(IFP)]$\kD\mc{A}(X,-)$ is in $\mod\mc{A}$ for each $X\in\mc{A}$, where $\kD=\Hom_{k}(-,k)$.\\
\item[(G)]
$\kD\mc{A}(X,-)$ has finite projective dimension over $\mc{A}$ for each $X\in\mc{A}$.
\end{itemize}
For example, if $\mc{A}$ is a dualizing $k$-variety, then $\mc{A}$ satisfies the condition (IFP).
On the other hand, the condition (G) is a categorical version of Gorensteinness.
Gorenstein-projective modules (also known as Cohen-Macaulay modules, totally reflexive modules) are important class of modules.
We denote by $\ms{GP}(\ms{R}\mc{A},\mc{A})$ the category of Gorenstein-projective $\ms{R}\mc{A}$-modules of finite projective dimension as $\mc{A}$-modules.
We prove the following.

\begin{theorem}[Corollaries \ref{cor-happel-thm}, \ref{happel-thm-dualizing}]\label{intro-thm-happel}
Let $\mc{A}$ be a $k$-linear, Hom-finite additive category.
\begin{itemize}
\item[{\rm (a)}]
Assume that  $\mc{A}$ and $\mc{A}^{\rm op}$ satisfy (IFP) and (G).
Then we have a triangle equivalence \[\ms{K}^{\rm b}(\proj\mc{A})\simeq\U{\ms{GP}}(\ms{R}\mc{A},\mc{A}).\]
\item[{\rm (b)}]
Assume that $\mc{A}$ is a dualizing $k$-variety.
If each object of $\mod\mc{A}$ and $\mod\mc{A}^{\rm op}$ has finite projective dimension,
then we have a triangle equivalence
\begin{align*}
\ms{D}^{\rm b}(\mod\mc{A})\simeq\smod\ms{R}\mc{A}.
\end{align*}
\end{itemize}
\end{theorem}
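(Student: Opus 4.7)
The plan is to prove part (a) by an explicit construction generalizing Happel's classical equivalence, and then deduce (b) as a corollary. For (b), the additional hypotheses imply that $\mc{A}$ and $\mc{A}^{\rm op}$ satisfy both (IFP) (since $\mc{A}$ is dualizing) and (G) (since every object of $\mod\mc{A}$ has finite projective dimension), so (a) applies. By Theorem \ref{into-thm-repetitive-dualizng}, $\ms{R}\mc{A}$ is a dualizing $k$-variety, hence $\mod\ms{R}\mc{A}$ is Frobenius, so every $\ms{R}\mc{A}$-module is Gorenstein-projective and the $\mc{A}$-projective dimension condition is automatic; this yields $\U{\ms{GP}}(\ms{R}\mc{A},\mc{A}) = \smod\ms{R}\mc{A}$. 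On the source side, the finite global dimension assumption gives $\ms{K}^{\rm b}(\proj\mc{A}) \simeq \ms{D}^{\rm b}(\mod\mc{A})$ by the standard argument, so (a) delivers (b).

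For (a), I would construct a triangle functor $F\colon \ms{K}^{\rm b}(\proj\mc{A}) \to \U{\ms{GP}}(\ms{R}\mc{A},\mc{A})$ by a ``folding'' procedure. An $\ms{R}\mc{A}$-module is naturally described as a $\mathbb{Z}$-indexed family $(M^n)$ of $\mc{A}$-modules together with structure maps encoding both the $\mc{A}$-action in each degree and the duality $\kD\mc{A}(-,-)$ shifting between adjacent degrees. Given a bounded complex $P^\bullet$ of finitely generated projectives over $\mc{A}$, I would package its components and differentials into such an $\ms{R}\mc{A}$-module $FP^\bullet$, and then verify: (i) $FP^\bullet$ is Gorenstein-projective, using condition (G) for both $\mc{A}$ and $\mc{A}^{\rm op}$ to build a complete projective resolution on the right via the duality structure of $\ms{R}\mc{A}$; (ii) it has finite projective dimension as an $\mc{A}$-module, directly from the boundedness of $P^\bullet$; (iii) $F$ sends contractible complexes to projective $\ms{R}\mc{A}$-modules and mapping cones to triangles, so it descends to a well-defined triangle functor on the homotopy category.

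Full faithfulness would be proved by computing stable $\Hom$-groups in $\U{\ms{GP}}(\ms{R}\mc{A},\mc{A})$ via projective $\ms{R}\mc{A}$-resolutions of $FP^\bullet$, and then using the adjunction between restriction along the inclusion $\mc{A}\hookrightarrow \ms{R}\mc{A}$ and induction to identify the outcome with homotopy classes of chain maps between bounded complexes of projectives; here (IFP) is essential to keep all the relevant functor-category data finitely presented. The main obstacle I expect is essential surjectivity: given $M \in \ms{GP}(\ms{R}\mc{A}, \mc{A})$, one must extract from the graded components $M^n$ a single bounded complex $P^\bullet$ of $\mc{A}$-projectives with $FP^\bullet \simeq M$ stably. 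The classical Happel--Keller--Vossieck argument for finite-dimensional algebras proceeds by induction on $\mc{A}$-projective dimension, using projective covers over the repetitive algebra to successively ``split off'' a summand of the form $FP^\bullet$ for a short complex; adapting this without chain conditions on $\mc{A}$ requires verifying that the needed covers exist in $\mod\ms{R}\mc{A}$ and that the inductive reduction terminates in bounded length, which is precisely where the finite $\mc{A}$-projective dimension hypothesis on $M$ enters.
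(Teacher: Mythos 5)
Your deduction of (b) from (a) matches the paper: you correctly observe that the dualizing and finite-projective-dimension hypotheses yield (IFP) and (G), that $\ms{GP}(\ms{R}\mc{A})=\mod\ms{R}\mc{A}$ (so the $\mc{A}$-projective-dimension constraint is automatic), and that $\ms{K}^{\rm b}(\proj\mc{A})\simeq\ms{D}^{\rm b}(\mod\mc{A})$ when $\mod\mc{A}$ has finite global dimension. This is exactly how the paper passes from Corollary \ref{cor-happel-thm} to Corollary \ref{happel-thm-dualizing}.

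For part (a), however, you take a genuinely different route. You propose the classical Happel strategy: explicitly construct a folding functor $F$ on bounded complexes of projectives, verify it is a well-defined triangle functor landing in $\ms{GP}(\ms{R}\mc{A},\mc{A})$, prove full faithfulness by direct computation of stable Hom-groups, and then fight for essential surjectivity by an induction on the $\mc{A}$-projective dimension of a Gorenstein-projective $\ms{R}\mc{A}$-module, splitting off summands via projective covers. The paper instead avoids constructing $F$ by hand: it builds $F$ abstractly from the inclusion $\proj\mc{A}\simeq\proj\mc{A}_0\hookrightarrow\ms{GP}(\ms{R}\mc{A},\mc{A})$ and Theorem \ref{rickard-theorem}, and then reduces the equivalence to two checkable conditions via Lemma \ref{basic-triangulated-functor}: (i) a Hom-computation on the generating objects (Lemma \ref{hom-of-tilting-subcategory}), and (ii) the statement that $\mc{M}=\add\{\mc{A}_0(-,X)\}$ is a tilting subcategory, i.e.\ $\thick\mc{M}=\U{\ms{GP}}(\ms{R}\mc{A},\mc{A})$ (Theorem \ref{thm-tilting-subcategory}, Propositions \ref{condition-orthogonal} and \ref{condition-thick}). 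Your essential-surjectivity induction and the paper's Proposition \ref{condition-thick} are morally the same claim, but the paper's induction is on the degree index $i$ of $\mc{A}_i(-,X)$ (with the two directions handled differently, and condition (G) entering in the negative direction via $\kD\mc{A}_{-i}(-,X)$ having finite projective dimension), rather than on $\pd_{\mc{A}}\rho(M)$. This is where your sketch is thinnest: an induction purely on $\mc{A}$-projective dimension would need a simultaneous control on the support range $\supp M$, and the splitting-off step is delicate in the functor-category setting without a direct replacement for the projective-cover-and-peel argument. The tilting-subcategory reformulation buys you exactly this: essential surjectivity and full faithfulness become package deals once $\mc{M}$ is shown to be tilting, so no explicit folding functor and no termination argument are needed.
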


We refer to \cite{BGG, IO, Kimura1, Kimura2, Lu, Minamoto-Yamaura, Mori-Ueyama, Y} for recent results which realize stable categories as derived categories in different settings.

In Section \ref{section-application-hereditary}, we show the following theorem, which together with Theorem \ref{intro-thm-happel} implies Theorem \ref{intro-thm-aim}.

\begin{theorem}[Theorem \ref{thm-rep-derived}]\label{intro-thm-rep-derived}
Let $A$ be a representation infinite hereditary algebra.
Then we have an equivalence of additive categories 
\begin{align*}
\ms{R}(\smod A) \simeq \ms{D}^{\rm b}(\mod A).
\end{align*}
\end{theorem}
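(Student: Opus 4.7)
The plan is to construct an explicit additive functor $F\colon\ms{R}(\smod A)\to\ms{D}^{\rm b}(\mod A)$ using the Auslander--Reiten translate $\tau$ as an autoequivalence of $\ms{D}^{\rm b}(\mod A)$, and to verify that it is fully faithful and essentially surjective. The key preliminary observation I would establish is that for representation infinite hereditary $A$, the stable Hom $\underline{\Hom}_A(X,Y)$ coincides with $\Hom_A(X,Y)$ for all $X,Y\in\smod A$. This reduces to showing $\Hom_A(X,P)=0$ for every non-projective indecomposable $X$ and every projective $P$; by Serre duality in $\ms{D}^{\rm b}(\mod A)$, $\Hom_A(X,P)\cong\kD\Hom_{\ms{D}^{\rm b}}(P,\nu X)$, and since $\nu X\cong\tau X[1]$ for $X$ non-projective, this equals $\kD\Ext^1_A(P,\tau X)=0$ as $P$ is projective. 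Consequently $\smod A$ is $k$-linearly equivalent to the full subcategory of $\mod A$ on non-projective objects.

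Next I would define $F(X,n):=\tau^n X[n]$ on objects. On morphisms within the same level $n$, the autoequivalence $X\mapsto\tau^n X[n]$ of $\ms{D}^{\rm b}(\mod A)$ gives a natural identification $\Hom_A(X,Y)\cong\Hom_{\ms{D}^{\rm b}}(F(X,n),F(Y,n))$. For morphisms across adjacent levels, Serre duality yields
\[ \Hom_{\ms{D}^{\rm b}}(F(X,n),F(Y,n+1))=\Ext^1(\tau^n X,\tau^{n+1}Y)\cong\kD\Hom_{\ms{D}^{\rm b}}(\tau^{n+1}Y,\tau^{n+1}X)\cong\kD\Hom_A(Y,X), \]
matching $\Hom_{\ms{R}(\smod A)}((X,n),(Y,n+1))=\kD\underline{\Hom}_A(Y,X)=\kD\Hom_A(Y,X)$. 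Naturality of Serre duality ensures compatibility with composition, so $F$ is a well-defined additive functor. Hom spaces between non-adjacent levels both vanish: in $\ms{R}(\smod A)$ by construction, and in $\ms{D}^{\rm b}(\mod A)$ because hereditariness forces $\Ext^i_A=0$ for $i\geq 2$, with the boundary cases from internal shifts of $\tau^{m-n}Y$ handled by $\Ext^1_A(-,I)=0$ for $I$ injective and the preliminary vanishing $\Hom_A(-,P)=0$.

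For essential surjectivity, I would classify indecomposables of $\ms{D}^{\rm b}(\mod A)$ as $M[k]$ with $M$ indecomposable in $\mod A$ and $k\in\mathbb{Z}$, and exhibit a unique preimage in $\ms{R}(\smod A)$ in each case. The identity $\tau P_j\cong I_j[-1]$ in $\ms{D}^{\rm b}(\mod A)$ shows that $\tau^n X$ moves a preprojective non-projective $X\in\smod A$ into a shifted preinjective once $n$ becomes large, and symmetrically moves a preinjective into a shifted preprojective when $n$ becomes sufficiently negative. Tracking these transitions, every $M[k]$ acquires exactly one preimage, coming from either a preprojective-source or a preinjective-source input in $\ms{R}(\smod A)$, governed by complementary arithmetic conditions on $(M,k)$. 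The main obstacle is the bookkeeping needed to verify that these two branches of source objects together cover $\ind(\ms{D}^{\rm b}(\mod A))$ without overlap; this amounts to checking that the defining inequalities on the source parameters (stemming from the boundary where $\tau$ crosses projective or injective indecomposables) partition the integers, completing the proof that $F$ is an equivalence of additive categories.
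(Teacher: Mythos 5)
Your proposal matches the paper's proof: since the derived Auslander--Reiten translate satisfies $\tau=\mb{S}\circ[-1]$, your functor $F(X,n)=\tau^{n}X[n]$ is exactly the paper's $\Phi(X,i)=\mb{S}^{i}(X)$, and both the Serre-duality computation of Hom-spaces and the essential-surjectivity plan via the preprojective/regular/preinjective trichotomy are the same as in the paper. The only difference is that you flag the essential-surjectivity bookkeeping (tracking when $\tau$-orbits cross projectives or injectives) as remaining, whereas the paper carries out those case-by-case formulas explicitly; your plan for it is correct.
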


\begin{notation}
In this paper, we denote by $k$ a field.
All subcategories are full and closed under isomorphisms.
Let $\mc{C}$ be an additive category and $\mc{S}$ be a subclass of objects of $\mc{C}$ or a subcategory of $\mc{C}$.
We denote by $\add\mc{S}$ the subcategory of $\mc{C}$ whose objects are direct summands of finite direct sums of objects in $\mc{S}$.
For subcategories $\mc{C}_{i}$ ($i\in I$) of $\mc{C}$, we denote by $\bigvee_{i\in I}\mc{C}_{i}$ the smallest additive subcategory of $\mc{C}$ containing all $\mc{C}_{i}$ and closed under direct summands.
For objects $X,Y\in\mc{C}$, we denote by $\mc{C}(X,Y)$ the set of morphisms from $X$ to $Y$ in $\mc{C}$.
We call a category \emph{skeletally small} if the class of isomorphism class of objects is a set.
We assume that all categories in this paper are skeletally small.
\end{notation}
\section{Preliminaries}
\subsection{Functor categories}
In this subsection, we recall the definition of modules over categories.
Let $\mc{A}$ be an additive category.
An \emph{$\mc{A}$-module} is a contravariant additive functor from $\mc{A}$ to $\mc{A}b$, where $\mc{A}b$ is the category of abelian groups.
We denote by $\Mod\mc{A}$ the category of $\mc{A}$-modules, where morphisms of $\Mod\mc{A}$ are morphisms of functors.
Since $\mc{A}$ is skeletally small, $\Mod\mc{A}$ is a category.
It is well known that $\Mod\mc{A}$ is abelian.

For two morphisms $f : L\to M$ and $g : M \to N$ of $\Mod\mc{A}$, the sequence $L \to M \to N$ is exact in $\Mod\mc{A}$ if and only if the induced sequence $L(X) \to M(X) \to N(X)$ is exact in $\mc{A}b$ for any $X\in\mc{A}$.
\begin{example}
For each $X\in\mc{A}$, we have an $\mc{A}$-module $\mc{A}(-,X)$.
By Yoneda's lemma, $\mc{A}(-,X)$ is projective in $\Mod\mc{A}$.
\end{example}
The following notation is basic and used throughout this paper.
We call an $\mc{A}$-module $M$ \emph{finitely generated} if there exists an epimorphism $\mc{A}(-,X)\to M$ in $\Mod\mc{A}$ for some $X\in\mc{A}$.
We denote by $\proj\mc{A}$ the subcategory of $\Mod\mc{A}$ consisting of all finitely generated projective $\mc{A}$-modules.
Note that finitely generated projective modules are precisely direct summands of representable functors.
We need the following notation which is called $FP_{n}$ in some literatures (e.g. \cite{SGA6, Brown}).
\begin{definition}\label{def-FP-n}
Let $\mc{A}$ be an additive category and $n\geq 0$ be an integer.
\begin{itemize}
\item[(1)]
We denote by $\mod_{n}\mc{A}$ the subcategory of $\Mod\mc{A}$ consisting of all $\mc{A}$-modules $M$ such that  there exists an exact sequence 
\begin{align*}
P_{n} \to \cdots \to P_{1} \to P_{0}\to M\to0
\end{align*} 
in $\Mod\mc{A}$, where $P_{i}$ is in $\proj\mc{A}$ for each $0\leq i \leq n$.
\item[(2)]
We denote by $\mod\mc{A}$ the subcategory of $\Mod\mc{A}$ consisting of all $\mc{A}$-modules $M$ such that there exists an exact sequence 
\begin{align*}
\cdots \to P_{2} \to P_{1}\to P_{0}\to M\to0
\end{align*} 
in $\Mod\mc{A}$, where $P_{i}$ is in $\proj\mc{A}$ for each $i\geq 0$.
\end{itemize}
\end{definition}
The following lemma is a basic observation on $\mod_{n}\mc{A}$.
\begin{lemma}\label{lem-Schanuel}
The following statements hold for an additive category $\mc{A}$.
\begin{itemize}
\item[{\rm (a)}]
Let $M\in\mod_{n}\mc{A}$.
Assume that there exists an exact sequence $P_{l}\to P_{l-1}\to\cdots \to P_{0} \to M \to 0$ with $P_{i}\in\proj\mc{A}$ and $l\leq n$.
Then there exist $P_{l+1},\ldots,P_{n}\in\proj\mc{A}$ and an exact sequence $P_{n}\to P_{n-1}\to\cdots \to P_{0} \to M \to 0$.
\item[{\rm (b)}]
Let $M\in\Mod\mc{A}$.
Assume that there exist the following two exact sequences
\begin{align*}
0 \to K \to P_{n} \to P_{n-1} \to \cdots \to P_{0} \to M \to 0,\\
0 \to L \to Q_{n} \to Q_{n-1} \to \cdots \to Q_{0} \to M \to 0,
\end{align*}
where $P_{i}, Q_{i}\in\proj\mc{A}$ for each $i\geq 0$.
Then there exist $P,Q\in\proj\mc{A}$ such that $K\oplus P\simeq L\oplus Q$.
\end{itemize}
\end{lemma}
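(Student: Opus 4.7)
The plan is to first establish the Schanuel-type isomorphism in part (b), and then deduce the extension property (a) from it together with an inductive argument on the difference $n - l$.

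For part (b), I would proceed by induction on $n$. For the base case $n = 0$, we have two surjections $P_{0} \twoheadrightarrow M$ and $Q_{0} \twoheadrightarrow M$ with kernels $K$ and $L$. Forming the pullback $E = P_{0} \times_{M} Q_{0}$ in $\Mod\mc{A}$, the two projections $E \to P_{0}$ and $E \to Q_{0}$ are surjective with kernels $L$ and $K$ respectively, and since $P_{0}$ and $Q_{0}$ are projective these short exact sequences split, giving $L \oplus P_{0} \simeq E \simeq K \oplus Q_{0}$. For the inductive step, apply the base case to the epimorphisms $P_{0} \twoheadrightarrow M$ and $Q_{0} \twoheadrightarrow M$ to obtain $K_{0} \oplus Q_{0} \simeq L_{0} \oplus P_{0}$, where $K_{0}$ and $L_{0}$ are the respective first syzygies. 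Splicing this isomorphism into the truncated resolutions of $K_{0}$ and $L_{0}$ (after augmenting $P_{1}$ with $Q_{0}$ and $Q_{1}$ with $P_{0}$ so that both sequences end with a common module), one may apply the induction hypothesis at length $n - 1$ to conclude.

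For part (a), I would induct on $n - l$. The case $n = l$ is trivial. For the inductive step, let $K_{l} = \ker(P_{l} \to P_{l-1})$ (with $P_{-1} := M$ when $l = 0$) and $L_{l} = \ker(Q_{l} \to Q_{l-1})$, where $Q_{n} \to \cdots \to Q_{0} \to M \to 0$ is the resolution coming from $M \in \mod_{n}\mc{A}$. Applying part (b) at length $l$ to the sequences $0 \to K_{l} \to P_{l} \to \cdots \to P_{0} \to M \to 0$ and $0 \to L_{l} \to Q_{l} \to \cdots \to Q_{0} \to M \to 0$ yields projectives $R, S \in \proj\mc{A}$ with $K_{l} \oplus R \simeq L_{l} \oplus S$. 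Since exactness of the $Q$-resolution at $Q_{l}$ gives a surjection $Q_{l+1} \twoheadrightarrow L_{l}$, setting $P_{l+1} := Q_{l+1} \oplus S$ produces a composite surjection $P_{l+1} \twoheadrightarrow L_{l} \oplus S \simeq K_{l} \oplus R \twoheadrightarrow K_{l}$, and composing with the inclusion $K_{l} \hookrightarrow P_{l}$ gives a map $P_{l+1} \to P_{l}$ that extends the partial resolution by one step. Invoking the induction hypothesis on $(n - l) - 1$ completes the argument.

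The main obstacle, though it is more of a bookkeeping issue than a conceptual one, is orchestrating the direct-summand augmentations in the inductive step of (b) so that both truncated sequences terminate in a common module before the induction hypothesis can be applied; beyond this, everything reduces to the base-case pullback argument together with projectivity.
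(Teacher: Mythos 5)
Your proof is correct and follows the same route as the paper: part (b) is the generalized Schanuel's lemma, established by a pullback argument in the base case $n=0$ and then by induction on $n$ after augmenting the two truncated resolutions to terminate at a common module; part (a) is then deduced from (b). The paper's own proof is simply terser (it cites Schanuel's lemma and induction without spelling out the details you supply).
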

\begin{proof}
(a)
This follows from (b).

(b)
The case where $n=0$ is well known as Schanuel's Lemma.
The case where $n>0$ is shown by an induction on $n$ and by using the case where $n=0$.
\end{proof}
The following lemma gives a sufficient condition when an $\mc{A}$-module is in $\mod_{n}\mc{A}$.
For simplicity, we use the notation $\mod_{-1}\mc{A}:=\Mod\mc{A}$, $\mod_{\infty}\mc{A}:=\mod\mc{A}$ and $\infty-1:=\infty$.
\begin{lemma}\label{M-in-mod-n-A}
Let $\mc{A}$ be an additive category and $M$ be an $\mc{A}$-module.
Then we have the following properties.
\begin{itemize}
\item[{\rm (a)}]
Let $n\geq0$ be an integer.
If there exists an exact sequence
$X_{n}\to X_{n-1} \to \cdots \to X_{0} \to M \to 0$
in $\Mod\mc{A}$ with $X_{i}\in\mod_{n-i}\mc{A}$ for any $0\leq i \leq n$,
then we have $M\in\mod_{n}\mc{A}$.
\item[{\rm (b)}]
If there exists an exact sequence
$\cdots \to X_{2} \to X_{1} \to X_{0} \to M \to 0$
in $\Mod\mc{A}$ with $X_{i}\in\mod\mc{A}$ for any $i\geq 0$,
then we have $M\in\mod\mc{A}$.
\item[{\rm (c)}]
Let $n\in\mb{Z}_{\geq 0}\cup\{\infty\}$.
For an exact sequence $0 \to L \to M \to N\to 0$ in $\Mod\mc{A}$ with $L\in\mod_{n-1}\mc{A}$ and $M\in\mod_{n}\mc{A}$, we have $N\in\mod_{n}\mc{A}$.
\end{itemize}
\end{lemma}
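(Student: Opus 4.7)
The plan is to prove (c) by induction on $n$, then deduce (a) from (c), and finally derive (b) from (a) together with Lemma \ref{lem-Schanuel}(a). For (c), the base case $n = 0$ is immediate: $N$ is an epimorphic image of the finitely generated module $M$, hence finitely generated. For the inductive step, given $0 \to L \to M \to N \to 0$ with $L \in \mod_{n-1}\mc{A}$ and $M \in \mod_n\mc{A}$, I would fix a length-$n$ projective resolution $P_n \to \cdots \to P_0 \to M \to 0$. Composing $P_0 \to M$ with $M \to N$ yields an epimorphism $P_0 \to N$. Writing $K = \Ker(P_0 \to M)$ and $K' = \Ker(P_0 \to N)$, the truncated sequence $P_n \to \cdots \to P_1 \to K \to 0$ places $K$ in $\mod_{n-1}\mc{A}$, while the snake lemma yields a short exact sequence $0 \to K \to K' \to L \to 0$. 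Granted that $\mod_{n-1}\mc{A}$ is closed under extensions, this forces $K' \in \mod_{n-1}\mc{A}$, and splicing a length $n-1$ projective resolution of $K'$ with $0 \to K' \to P_0 \to N \to 0$ produces a length-$n$ projective resolution of $N$, as desired. The case $n = \infty$ goes through verbatim using an infinite resolution of $M$, and can alternatively be recovered from the finite cases via Lemma \ref{lem-Schanuel}(a).

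For (a) I would induct on $n$, leaning on (c). The case $n = 0$ is trivial: composing an epimorphism $P_0 \to X_0$ (coming from $X_0 \in \mod_0\mc{A}$) with $X_0 \to M$ gives $P_0 \to M \to 0$ exact. For the inductive step, setting $K = \Ker(X_0 \to M)$, the sequence $X_n \to \cdots \to X_1 \to K \to 0$ is exact and, after reindexing, satisfies the hypothesis of (a) at level $n-1$. The induction hypothesis gives $K \in \mod_{n-1}\mc{A}$, and then (c) applied to $0 \to K \to X_0 \to M \to 0$ produces $M \in \mod_n\mc{A}$. Part (b) follows from (a) by truncation: for each $n$, the truncated sequence $X_n \to \cdots \to X_0 \to M \to 0$ has $X_i \in \mod\mc{A} \subseteq \mod_{n-i}\mc{A}$, so (a) gives $M \in \mod_n\mc{A}$ for every $n \geq 0$. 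Lemma \ref{lem-Schanuel}(a) then allows one to extend any chosen finite projective resolution of $M$ by one term at a time, assembling an infinite projective resolution that witnesses $M \in \mod\mc{A}$.

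The main obstacle is the extension-closure of $\mod_{n-1}\mc{A}$ used in the inductive step of (c). I would establish this as a separate lemma, proved by induction on the subscript via a horseshoe construction: given $0 \to K \to K' \to L \to 0$ with $K, L \in \mod_j\mc{A}$, fix epimorphisms $P_0 \to K$ and $Q_0 \to L$ from projectives, lift the latter to $Q_0 \to K'$, and obtain an epimorphism $P_0 \oplus Q_0 \to K'$ whose kernel sits in an exact sequence of the same shape but with outer terms in $\mod_{j-1}\mc{A}$; the inductive hypothesis then yields $K' \in \mod_j\mc{A}$ after splicing. This auxiliary induction is independent of (c), so the overall argument is free of circularity.
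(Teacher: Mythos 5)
Your proof is correct, but it inverts the logical structure used in the paper. The paper proves (a) and (b) directly by building a Cartan--Eilenberg style double complex over the given exact sequence: one resolves each $X_i$ by projectives to depth $n-i$, fills in the commutative ladder, and totalizes to get $\overline{P}_i = \bigoplus_{j=0}^{i} P_{j,i-j}$, giving a length-$n$ (resp.\ infinite) projective presentation of $M$ in one stroke; (c) then falls out by padding $0 \to L \to M \to N \to 0$ with zeros and invoking (a) or (b). You instead prove (c) first by a snake-lemma dimension shift, hinging on extension-closure of $\mod_{n-1}\mc{A}$, and then bootstrap (a) by induction and (b) via truncation plus Lemma \ref{lem-Schanuel}(a). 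Both routes are sound; the key divergence is that the paper postpones extension-closure to Lemma \ref{cap-modnA-modinfA}(a) (where it is cited as the Horseshoe Lemma) precisely \emph{because} its totalization argument does not need it, whereas your approach requires promoting that fact to a preliminary lemma and carefully checking it is proved independently of (a)--(c) --- which you correctly flag and which your auxiliary Horseshoe-style induction on $j$ does accomplish. One small stylistic note: your argument for the inductive step of (c) never actually invokes the inductive hypothesis of (c); once extension-closure of $\mod_{n-1}\mc{A}$ is in hand, (c) is a direct (non-inductive) argument, so the framing as an induction on $n$ is unnecessary though harmless.
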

\begin{proof}
(a)
We have the following commutative diagram
$$
\xymatrix@=12pt{
X_{n} \ar[r]& X_{n-1} \ar[r]& \cdots \ar[r]& X_{0} \ar[r]& M \ar[r]& 0 \\
P_{n, 0} \ar[r]\ar[u]& P_{n-1, 0} \ar[r]\ar[u]& \cdots \ar[r]& P_{0,0} \ar[u]& & \\
	& P_{n-1,1} \ar[r]\ar[u]& \cdots \ar[r]& P_{0,1} \ar[u]& & \\
	&	&	& \cdots \ar[u]& & \\
	&	&	& P_{0,n} \ar[u]& &\\
}
$$
in $\Mod\mc{A}$, where each $P_{i,0} \to X_{i}$ is epimorphism for $0\leq i \leq n$, each vertical sequence is exact and each $P_{i,j}$ is in $\proj\mc{A}$.
Thus we have an exact sequence 
\begin{align*}
\overline{P}_{n} \to \cdots \to \overline{P}_{1}\to \overline{P}_{0}\to M\to0
\end{align*} 
in $\Mod\mc{A}$, where $\overline{P}_{i}=\bigoplus_{j=0}^{i}P_{j,i-j}$ for $0\leq i \leq n$.
Since $\overline{P}_{i}$ is in $\proj\mc{A}$ for $0\leq i \leq n$, $M$ is an object of $\mod_{n}\mc{A}$.

(b) This comes from the same argument as (a).

(c) This follows from (a) for $n\in\mb{Z}_{\geq 0}$ and (b) for $n=\infty$.
\end{proof}
Let $\mc{A}$ be an abelian category and $\mc{B}$ be a subcategory of $\mc{A}$.
We say that $\mc{B}$ is a \emph{thick} subcategory of $\mc{A}$ if $\mc{B}$ is closed under direct summands and for any exact sequence $0\to X \to Y \to Z \to 0$ in $\mc{A}$, if two of $X,Y,Z$ are in $\mc{A}$, then so is the third.
We have the following observation of the categories $\mod_{n}\mc{A}$.
\begin{lemma}\label{cap-modnA-modinfA}
Let $\mc{A}$ be an additive category.
Then we have the following statements.
\begin{itemize}
\item[{\rm (a)}]
$\mod_{n}\mc{A}$ is closed under extensions and direct summands in $\Mod\mc{A}$ for each $n\geq 0$.
\item[{\rm (b)}]
$\mod\mc{A}=\bigcap_{n\geq 0}\mod_{n}\mc{A}$ holds.
\item[{\rm (c)}]
{\rm (e.g. \cite[Proposition 2.6]{Enomoto})} $\mod\mc{A}$ is a thick subcategory of $\Mod\mc{A}$.
\end{itemize}
\end{lemma}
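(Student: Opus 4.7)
The approach is to handle (a) by induction on $n$, then derive (b) from (a) together with Schanuel's lemma, and finally deduce (c) from (a), (b), and Lemma \ref{M-in-mod-n-A}(c).

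For (a), I would prove closure under extensions and under direct summands simultaneously by induction on $n$. The base case $n=0$ is easy: $\proj\mc{A}$ is closed under summands by definition, and any short exact sequence $0\to L\to M\to N\to 0$ with $N$ projective splits. For the inductive step, assume the claim for $n-1$. For extensions, given $0\to L\to M\to N\to 0$ with $L,N\in\mod_n\mc{A}$, pick epimorphisms $P_L\to L$ and $P_N\to N$ from finitely generated projectives with kernels $K_L,K_N\in\mod_{n-1}\mc{A}$, then lift $P_N\to N$ along $M\to N$ (using projectivity) and combine with $P_L\to L\to M$ to form a horseshoe-type epimorphism $P_L\oplus P_N\twoheadrightarrow M$; the $3\times 3$-lemma produces an exact sequence $0\to K_L\to K\to K_N\to 0$ for the kernel $K$, and the inductive hypothesis gives $K\in\mod_{n-1}\mc{A}$. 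For direct summands, given $M_1\oplus M_2\in\mod_n\mc{A}$, pick epimorphisms $P_i\twoheadrightarrow M_i$ with $P_i\in\proj\mc{A}$, apply Lemma \ref{lem-Schanuel}(b) to compare the resulting kernel $K_1\oplus K_2$ against the top syzygy of a chosen resolution of $M_1\oplus M_2$, and conclude $K_1\oplus K_2\in\mod_{n-1}\mc{A}$; by induction each $K_i$ is in $\mod_{n-1}\mc{A}$, hence $M_i\in\mod_n\mc{A}$.

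For (b), the inclusion $\mod\mc{A}\subseteq\bigcap_{n}\mod_n\mc{A}$ is clear from the definition. Conversely, if $M\in\mod_n\mc{A}$ for every $n$, I would build an infinite projective resolution inductively: pick any epimorphism $P_0\twoheadrightarrow M$ with $P_0\in\proj\mc{A}$ and set $K_0:=\ker(P_0\to M)$. For each $n$, the assumption $M\in\mod_{n+1}\mc{A}$ furnishes a length $n+1$ resolution with top syzygy $K_0'\in\mod_n\mc{A}$, and Lemma \ref{lem-Schanuel}(b) plus part (a) force $K_0\in\mod_n\mc{A}$ as well. Thus $K_0\in\bigcap_n\mod_n\mc{A}$ and the construction iterates.

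For (c), closure under direct summands is immediate from (a) and (b), and the case $L,N\in\mod\mc{A}\Rightarrow M\in\mod\mc{A}$ comes directly from (a) applied at every $n$. The case $L,M\in\mod\mc{A}\Rightarrow N\in\mod\mc{A}$ is Lemma \ref{M-in-mod-n-A}(c) with $n=\infty$. The remaining case $M,N\in\mod\mc{A}\Rightarrow L\in\mod\mc{A}$ I would handle by showing $L\in\mod_n\mc{A}$ for every $n$: for each $n$, use $N\in\mod_{n+1}\mc{A}$ to pick an epimorphism $P\twoheadrightarrow N$ with $P\in\proj\mc{A}$ and kernel $K\in\mod_n\mc{A}$, lift to $P\to M$, and verify that $L\oplus P\to M$ defined by $(l,p)\mapsto l+(P\to M)(p)$ is surjective with kernel isomorphic to $K$; the resulting exact sequence $0\to K\to L\oplus P\to M\to 0$ together with $M\in\mod_n\mc{A}$ and (a) yields $L\oplus P\in\mod_n\mc{A}$, and summand-closure from (a) gives $L\in\mod_n\mc{A}$.

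The main obstacle I expect is arranging the induction in (a) cleanly, in particular verifying that the Schanuel-based argument for direct-summand closure really reduces to the inductive hypothesis (since the a priori resolutions of $M_1\oplus M_2$ and of $P_1\oplus P_2\twoheadrightarrow M_1\oplus M_2$ have different lengths, one must be careful to apply Lemma \ref{lem-Schanuel}(a) first to align their lengths before invoking Lemma \ref{lem-Schanuel}(b)). Everything else is a routine diagram chase once (a) is in place.
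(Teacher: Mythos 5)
Your proposal is correct, and the overall structure (induction for (a), Schanuel for (b), pullback for (c)) matches the paper's. The one place worth flagging is the direct-summand closure in (a): the paper handles it by a slicker argument that avoids Schanuel entirely. Given $X\oplus Y\in\mod_n\mc{A}$ with $n>0$, one has $X\oplus Y\in\mod_{n-1}\mc{A}$, so the inductive hypothesis gives $X,Y\in\mod_{n-1}\mc{A}$; then the split exact sequence $0\to X\to X\oplus Y\to Y\to 0$ has left term in $\mod_{n-1}\mc{A}$ and middle term in $\mod_n\mc{A}$, so Lemma \ref{M-in-mod-n-A}(c) immediately yields $Y\in\mod_n\mc{A}$ (and symmetrically $X$). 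Your Schanuel-based route also works, and the obstacle you anticipate (aligning resolution lengths before comparing syzygies) is real, but in fact once you extend the one-step resolution $P_1\oplus P_2\twoheadrightarrow M_1\oplus M_2$ to length $n$ via Lemma \ref{lem-Schanuel}(a), the first syzygy $K_1\oplus K_2$ already has a visible length-$(n-1)$ resolution by truncation, so Lemma \ref{lem-Schanuel}(b) is not actually needed there; this simplifies your plan slightly. Parts (b) and (c) are fine as stated (the paper's (b) iterates Lemma \ref{lem-Schanuel}(a) exactly as you suggest, and its (c) uses the same pullback $0\to\Omega N\to P\oplus L\to M\to 0$, just stated once at $n=\infty$ rather than level by level).
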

\begin{proof}
(a)
By Horseshoe Lemma, $\mod_{n}\mc{A}$ is closed under extensions in $\Mod\mc{A}$.
Let $X\oplus Y\in\mod_{n}\mc{A}$.
We show that $X,Y\in\mod_{n}\mc{A}$ by an induction on $n$.
If $n=0$, then the claim is clear.
Assume $n>0$.
Since $X\oplus Y\in\mod_{n}\mc{A}\subset\mod_{n-1}\mc{A}$ holds,
by the inductive hypothesis, we have $X,Y\in\mod_{n-1}\mc{A}$.
Then by Lemma \ref{M-in-mod-n-A} (c), we have $X,Y\in\mod_{n}\mc{A}$.

(b)
In general $\mod\mc{A}\subset\mod_{n}\mc{A}$ holds for each $n\geq 0$.
The converse follows from Lemma \ref{lem-Schanuel} (a).

(c)
By (a) and (b), $\mod\mc{A}$ is closed under extensions and direct summands.
Let $0\to L \to M \to N \to 0$ be an exact sequence in $\Mod\mc{A}$.
By Lemma \ref{M-in-mod-n-A} (c), if $L, M \in \mod\mc{A}$, then $N\in\mod\mc{A}$ holds.
Assume that $M,N\in\mod\mc{A}$.
There exists an exact sequence $0 \to \Omega N \to P \to N \to 0$ such that $P\in\proj\mc{A}$ and $\Omega N\in\mod\mc{A}$.
By taking a pull-back diagram of $M\to N \leftarrow P$, we have an exact sequence $0 \to \Omega N \to P\oplus L \to M \to 0$.
Since $\mod\mc{A}$ is closed under extensions and direct summands, we have $L\in\mod\mc{A}$.
\end{proof}
\subsection{Gorenstein-projective modules}
We define Gorenstein-projective modules.
Let $\mc{A}$ be an additive category.
We first define a contravariant functor
\begin{align*}
(-)^{\ast} : \Mod\mc{A}\to\Mod\mc{A}^{\rm op}
\end{align*}
as follows: for $M\in\Mod\mc{A}$ and $X\in\mc{A}$, let $(M)^{\ast}(X):=(\Mod\mc{A})(M,\mc{A}(-,X))$.
By the same way, we define a contravariant functor $(-)^{\ast} : \Mod\mc{A}^{\rm op}\to\Mod\mc{A}$.
Let $P_{\bullet}:=(P_{i}, d_{i}: P_{i}\to P_{i+1})_{i\in\mb{Z}}$ be a complex of finitely generated projective $\mc{A}$-modules.
We say that $P_{\bullet}$ is \emph{totally acyclic} if complexes $P_{\bullet}$ and $\cdots \to (P_{i+1})^{\ast} \to (P_{i})^{\ast} \to (P_{i-1})^{\ast} \to \cdots$ are acyclic.
\begin{definition}\label{def-goren-proj}
Let $\mc{A}$ be an additive category.
An $\mc{A}$-module $M$ is said to be \emph{Gorenstein-projective} if there exists a totally acyclic complex $P_{\bullet}$ such that $\Im d_{0}$ is isomorphic to $M$.
We denote by $\ms{GP}\mc{A}$ the full subcategory of $\Mod\mc{A}$ consisting of all Gorenstein-projective $\mc{A}$-modules.
\end{definition}
For instance, a finitely generated projective $\mc{A}$-module is Gorenstein-projective.
In general, $\ms{GP}\mc{A}\subset\mod\mc{A}$ holds.
We see a fundamental properties of Gorenstein-projective modules.

Let $\mc{W}$ be a subcategory of $\Mod\mc{A}$.
We denote by ${}^{\perp}\mc{W}$ the subcategory of $\Mod\mc{A}$ consisting of $\mc{A}$-modules $M$ satisfying $\Ext_{\Mod\mc{A}}^{i}(M,W)=0$ for any $W\in\mc{W}$ and any $i>0$.
We denote by $\mc{X}_{\mc{W}}$ the subcategory of ${}^{\perp}\mc{W}$ consisting of $\mc{A}$-modules $M$ such that there exists an exact sequence $0\to M \to W_{0} \xto{f_{0}} W_{1} \xto{f_{1}} \cdots$ with $W_{i}\in\mc{W}$ and $\Im f_{i}\in{}^{\perp}\mc{W}$ for any $i\geq 0$.
By \cite[Proposition 5.1]{AR91}, $\mc{X}_{\proj\mc{A}}$ is closed under extensions, direct summands and kernels of epimorphisms in $\Mod\mc{A}$.
\begin{lemma}\label{lem-GP-dual-thick}
Let $\mc{A}$ be an additive category.
Then the following holds.
\begin{itemize}
\item[{\rm (a)}]
The functor $(-)^{\ast} : \Mod\mc{A}\to\Mod\mc{A}^{\rm op}$ induces a duality $(-)^{\ast} : \ms{GP}\mc{A}\to\ms{GP}\mc{A}^{\rm op}$.
\item[{\rm (b)}]
$\mc{X}_{\proj\mc{A}}\cap\mod\mc{A}=\ms{GP}\mc{A}$ holds.
In particular, $\ms{GP}\mc{A}$ is closed under extensions, direct summands and kernels of epimorphisms in $\Mod\mc{A}$.
\end{itemize}
\end{lemma}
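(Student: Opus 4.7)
My plan is to handle (a) via the symmetric nature of total acyclicity, and (b) by splicing resolutions and checking $\Ext$-vanishings. The basic input for both parts is Yoneda: the functor $(-)^{*}$ sends $\mc{A}(-,X)$ to the representable $\mc{A}^{\rm op}$-module $\mc{A}(X,-)$, so it carries $\proj\mc{A}$ into $\proj\mc{A}^{\rm op}$ and the canonical biduality morphism is an isomorphism on finitely generated projectives.

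For (a), I would take $M\in\ms{GP}\mc{A}$ with a totally acyclic complex $P_{\bullet}$ such that $M\simeq\Im d_{0}$. Dualizing yields a complex $(P_{\bullet})^{*}$ in $\proj\mc{A}^{\rm op}$; since its own dual recovers $P_{\bullet}$ up to natural isomorphism, the condition in Definition \ref{def-goren-proj} is symmetric under $(-)^{*}$, and so $(P_{\bullet})^{*}$ is itself totally acyclic. A short kernel computation, using left-exactness of $\Hom(-,\mc{A}(-,X))$, identifies the image of the corresponding differential of $(P_{\bullet})^{*}$ with $M^{*}$, so $M^{*}\in\ms{GP}\mc{A}^{\rm op}$. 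That $(-)^{*}$ is a duality on $\ms{GP}\mc{A}$ then follows from a five-lemma argument applied to a projective presentation, using biduality for projectives.

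For (b), the inclusion $\ms{GP}\mc{A}\subset\mc{X}_{\proj\mc{A}}\cap\mod\mc{A}$ is immediate from the definitions: the two halves of a totally acyclic complex resolving $M$ furnish a projective resolution and a $\proj\mc{A}$-coresolution of $M$, and acyclicity of the dual complex supplies the required $\Ext^{>0}(-,\mc{A}(-,X))$-vanishings for $M$ and for each of its right-hand syzygies. For the reverse inclusion, given $M$ in the intersection, I would splice a projective resolution $\cdots\to P_{1}\to P_{0}\to M\to 0$ (existing since $M\in\mod\mc{A}$) with a coresolution $0\to M\to W_{0}\xto{f_{0}}W_{1}\to\cdots$ (existing since $M\in\mc{X}_{\proj\mc{A}}$) to produce a complex $P'_{\bullet}$ in $\proj\mc{A}$. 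Acyclicity is immediate from the construction, and total acyclicity reduces to checking exactness of $(P'_{\bullet})^{*}$ on each half, which in turn follows from the vanishings $\Ext^{i}(M,\mc{A}(-,X))=0$ and $\Ext^{i}(\Im f_{j},\mc{A}(-,X))=0$ for $i>0$.

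I expect the main subtlety to be exactness of the dualized complex at the splicing degree, where the two halves meet. The key observation will be that $P_{0}\twoheadrightarrow M\hookrightarrow W_{0}$ dualizes, via left-exactness of $\Hom$, to a factorization through $M^{*}$, so that $\Ker(W_{0}^{*}\to P_{0}^{*})=\Ker(W_{0}^{*}\to M^{*})$; this kernel equals $\Im(W_{1}^{*}\to W_{0}^{*})$ by exactness of the dualized coresolution, which is guaranteed by the $\Ext$-vanishing hypotheses. The "in particular" statement then drops out of the cited closure properties of $\mc{X}_{\proj\mc{A}}$ together with the corresponding closure properties of $\mod\mc{A}$ from Lemma \ref{cap-modnA-modinfA}.
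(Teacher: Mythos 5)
Your proposal is correct and follows essentially the same route as the paper: part (a) by dualizing a totally acyclic complex and using that $(-)^{\ast}$ is a duality on $\proj\mc{A}$, and part (b) by splicing a projective resolution (from $M\in\mod\mc{A}$) with a $\proj\mc{A}$-coresolution (from $M\in\mc{X}_{\proj\mc{A}}$) and verifying total acyclicity via the $\Ext^{>0}(-,\proj\mc{A})$-vanishings built into $\mc{X}_{\proj\mc{A}}$. The paper states the splicing argument in two sentences, whereas you unpack the splicing-degree check explicitly; your analysis is sound, and the only additional remark worth making is that on the left half the vanishing $\Ext^{1}(\Omega^{k}M,\proj\mc{A})=0$ is obtained from $\Ext^{>0}(M,\proj\mc{A})=0$ by dimension shift, which you correctly subsume under the vanishing hypotheses on $M$.
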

\begin{proof}
(a)
This follows from the definition of $\ms{GP}\mc{A}$ and the fact that $(-)^{\ast}$ induces a duality between $\proj\mc{A}$ and $\proj\mc{A}^{\rm op}$.

(b)
In general $\mc{X}_{\proj\mc{A}}\cap\mod\mc{A}\supset\ms{GP}\mc{A}$ holds.
If $M\in\mc{X}_{\proj\mc{A}}\cap\mod\mc{A}$, then there exists an exact sequence $P_{\bullet}=(P_{i}, d_{i}: P_{i}\to P_{i+1})_{i\in\mb{Z}}$,
where $M\simeq \Im d_{0}$, $P_{i}\in\proj\mc{A}$ for any $i\in\mb{Z}$ and $\Im d_{i}\in{}^{\perp}(\proj\mc{A})$ for any $i\geq 1$.
Then this sequence is totally acyclic, since $\Im d_{i}\in{}^{\perp}(\proj\mc{A})$ holds for any $i\geq 1$.
\end{proof}
Let $\mc{B}$ be an extension closed subcategory of an abelian category $\mc{A}$.
An exact sequence in $\mc{A}$ is called an exact sequence in $\mc{B}$ if each term of it is an object of $\mc{B}$.
We say that an object $Z$ in $\mc{B}$ is \emph{relative-projective} if any exact sequence $0\to X\to Y \to Z \to 0$ in $\mc{B}$ splits.
Dually, we define \emph{relative-injective} objects.
We say that $\mc{B}$ has \emph{enough projectives} if for any $X\in\mc{B}$, there exists an exact sequence $0\to Z \to P \to X \to 0$ in $\mc{B}$ such that $P$ is relative-projective.
Dually, we define a subcategory of $\mc{A}$ which has \emph{enough injectives}.
An extension closed subcategory $\mc{B}$ of $\mc{A}$ is said to be \emph{Frobenius} if $\mc{B}$ has enough projectives, enough injectives and the relative-projective objects coincide with the relative-injective objects.

The following observation is immediate (cf. \cite{XWChen}).
\begin{proposition}\label{GP-Frobenius}
Let $\mc{A}$ be an additive category.
Then $\ms{GP}\mc{A}$ is a Frobenius category, where the relative-projective objects are precisely finitely generated $\mc{A}$-modules.
\end{proposition}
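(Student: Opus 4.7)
The plan is to work directly from the definition of $\ms{GP}\mc{A}$, together with the characterization $\ms{GP}\mc{A}=\mc{X}_{\proj\mc{A}}\cap\mod\mc{A}$ established in Lemma \ref{lem-GP-dual-thick} (b). Closure under extensions already follows from that lemma, so $\ms{GP}\mc{A}$ inherits a natural exact structure from $\Mod\mc{A}$ (a conflation being an exact sequence $0\to X\to Y\to Z\to 0$ in $\Mod\mc{A}$ with all three terms in $\ms{GP}\mc{A}$). It remains to identify the relative-projectives and relative-injectives, to exhibit enough of each, and to show they coincide. The statement we aim for is that both classes equal $\proj\mc{A}$.

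The key mechanism is the totally acyclic complex itself. Given $M\in\ms{GP}\mc{A}$, choose $P_{\bullet}=(P_{i},d_{i})_{i\in\mb{Z}}$ totally acyclic with $M\simeq\Im d_{0}$. From acyclicity I extract the two short exact sequences
\[
0\to\Im d_{-1}\to P_{0}\to M\to 0,\qquad 0\to M\to P_{1}\to\Im d_{1}\to 0,
\]
and by shifting $P_{\bullet}$ the modules $\Im d_{-1}$ and $\Im d_{1}$ again lie in $\ms{GP}\mc{A}$. The first sequence shows that $\ms{GP}\mc{A}$ has enough relative-projectives taken from $\proj\mc{A}$, and the second does the same for relative-injectives.

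Next I verify that every $P\in\proj\mc{A}$ is simultaneously relative-projective and relative-injective in $\ms{GP}\mc{A}$. Relative-projectivity is trivial since $P$ is projective already in $\Mod\mc{A}$, so any conflation ending at $P$ splits. For relative-injectivity, let $0\to P\to Y\to Z\to 0$ be a conflation with $Z\in\ms{GP}\mc{A}$; by Lemma \ref{lem-GP-dual-thick} (b), $Z\in\mc{X}_{\proj\mc{A}}\subset{}^{\perp}\proj\mc{A}$, hence $\Ext^{1}_{\Mod\mc{A}}(Z,P)=0$ and the conflation splits. This use of the $\Ext$-vanishing built into the definition of $\mc{X}_{\proj\mc{A}}$ is the one non-formal ingredient and is where I expect the main obstacle to lie if one tried to bypass Lemma \ref{lem-GP-dual-thick} (b).

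Finally, the converse directions close the argument. If $M\in\ms{GP}\mc{A}$ is relative-projective, the conflation $0\to\Im d_{-1}\to P_{0}\to M\to 0$ splits, so $M$ is a direct summand of $P_{0}\in\proj\mc{A}$, hence $M\in\proj\mc{A}$; the dual argument with $0\to M\to P_{1}\to\Im d_{1}\to 0$ shows that every relative-injective lies in $\proj\mc{A}$. Combining these identifications yields that $\ms{GP}\mc{A}$ has enough projectives and enough injectives, that both classes coincide with $\proj\mc{A}$, and therefore that $\ms{GP}\mc{A}$ is a Frobenius category with the stated projective-injectives.
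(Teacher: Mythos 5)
Your proof is correct and follows essentially the same outline as the paper's: extension closure from Lemma~\ref{lem-GP-dual-thick}\,(b), enough projectives and injectives read off the two short exact sequences $0\to\Im d_{-1}\to P_0\to M\to 0$ and $0\to M\to P_1\to\Im d_1\to 0$ extracted from a totally acyclic complex, and identification of the relative-projective-injectives with $\proj\mc{A}$ via the $\Ext^1$-vanishing implicit in $\ms{GP}\mc{A}\subset\mc{X}_{\proj\mc{A}}\subset{}^{\perp}\proj\mc{A}$. The paper's version is terser and also invokes the duality $(-)^{\ast}$ to emphasize the symmetry, whereas you obtain both directions directly from the totally acyclic complex; this is a cosmetic rather than structural difference.
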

\begin{proof}
$\ms{GP}\mc{A}$ is extension closed in $\Mod\mc{A}$ by Lemma \ref{lem-GP-dual-thick} (b).
By the definition of $\ms{GP}\mc{A}$ and the duality $(-)^{\ast} : \ms{GP}\mc{A} \to \ms{GP}\mc{A}^{\rm op}$, $\ms{GP}\mc{A}$ has enough projectives and enough injectives.
Again by the definition of $\ms{GP}\mc{A}$, the relative-projective objects coincide with the relative-injective objects, which coincide with finitely generated projective $\mc{A}$-modules.
\end{proof}
\subsection{Dualizing $k$-varieties and Serre dualities}
In this subsection, we recall the definition of dualizing $k$-varieties.
Let $\mc{A}$ be an additive category.
We call an object of $\mod_{1}\mc{A}$ a \emph{finitely presented} $\mc{A}$-module.

A morphism $X\to Y$ in $\mc{A}$ is a \emph{weak kernel} of a morphism $Y \to Z$ if the induced sequence $\mc{A}(-,X)\to\mc{A}(-,Y)\to\mc{A}(-,Z)$ is exact in $\Mod\mc{A}$.
We say that $\mc{A}$ has weak kernels if each morphism in $\mc{A}$ has a weak kernel.
The following lemma says when an additive category has weak kernels.
\begin{lemma}\label{lem-weak-mod-modinf}
Let $\mc{A}$ be an additive category.
The following statements are equivalent.
\begin{itemize}
\item[{\rm (i)}]
$\mc{A}$ has weak kernels.
\item[{\rm (ii)}]
$\mod_{1}\mc{A}$ is abelian.
\item[{\rm (iii)}]
$\mod_{1}\mc{A}=\mod\mc{A}$ holds.
\end{itemize}
\end{lemma}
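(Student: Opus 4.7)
The plan is to prove the cycle (i) $\Rightarrow$ (iii) $\Rightarrow$ (i) together with the equivalence (i) $\Leftrightarrow$ (ii), all by direct manipulation of two-term projective presentations via the Yoneda embedding. Each implication amounts to translating a statement about morphisms in $\mc{A}$ (existence of weak kernels) into a statement about projective presentations in $\Mod\mc{A}$ and back.

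For (i) $\Rightarrow$ (iii), I would start with $M \in \mod_{1}\mc{A}$ and a presentation $\mc{A}(-, X_{1}) \xto{\mc{A}(-, f)} \mc{A}(-, X_{0}) \to M \to 0$, take a weak kernel $g \colon X_{2} \to X_{1}$ of $f$, and append $\mc{A}(-, g)$ on the left, which extends the sequence one step by the very definition of weak kernel. Iterating produces a resolution of $M$ by finitely generated projectives, so $M \in \mod\mc{A}$. Conversely, for (iii) $\Rightarrow$ (i), given $f \colon Y \to Z$ in $\mc{A}$, I would set $M := \Cok(\mc{A}(-, f))$, which lies in $\mod_{1}\mc{A} = \mod\mc{A} \subset \mod_{2}\mc{A}$. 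Lemma \ref{lem-Schanuel}(a) applied to the length-one presentation of $M$ supplies an $X \in \mc{A}$ and an exact sequence $\mc{A}(-, X) \to \mc{A}(-, Y) \to \mc{A}(-, Z) \to M \to 0$, whose leftmost arrow lifts via Yoneda's lemma to a morphism $X \to Y$ that is a weak kernel of $f$.

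For (i) $\Leftrightarrow$ (ii), which is the classical observation of Freyd, I would argue as follows. Assuming (ii), the kernel $K$ of $\mc{A}(-, f) \colon \mc{A}(-, Y) \to \mc{A}(-, Z)$ is a finitely generated object of the abelian category $\mod_{1}\mc{A}$, so any epimorphism $\mc{A}(-, X) \to K$ composed with $K \hookrightarrow \mc{A}(-, Y)$ corresponds under Yoneda to a weak kernel of $f$. Conversely, assuming (i), $\mod_{1}\mc{A}$ is closed under cokernels in $\Mod\mc{A}$ by concatenating presentations; for kernels, I would lift a morphism $M \to N$ between finitely presented modules to a chain map of two-term projective presentations and then use weak kernels in $\mc{A}$ to assemble an explicit two-term presentation of $\Ker(M \to N)$. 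The main technical hurdle is this last step: combining weak kernels in $\mc{A}$ to exhibit a finite projective presentation of the kernel of a morphism in $\mod_{1}\mc{A}$. The remaining implications are essentially bookkeeping with Yoneda and Lemma \ref{lem-Schanuel}(a).
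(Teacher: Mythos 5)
Your proposal is correct and supplies the explicit arguments that the paper leaves to citation: the paper records (i) $\Leftrightarrow$ (ii) as classical and cites \cite[Proposition 2.7]{Enomoto} for (i) $\Leftrightarrow$ (iii), so what you have done is fill in the references rather than take a different route. Your splicing of weak kernels for (i) $\Rightarrow$ (iii), and the appeal to Lemma \ref{lem-Schanuel}(a) plus Yoneda for (iii) $\Rightarrow$ (i), are exactly the standard arguments; just make explicit the routine reduction from an arbitrary $P_{i}\in\proj\mc{A}$ to a representable $\mc{A}(-,X_{i})$ by composing with the split epimorphism $\mc{A}(-,X_{i})\to P_{i}$. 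For (ii) $\Rightarrow$ (i) you should say a word about why the kernel $K$ of $\mc{A}(-,f)$ taken in $\Mod\mc{A}$ lies in $\mod_{1}\mc{A}$: abelianity of $\mod_{1}\mc{A}$ gives a kernel object $K'$ there, and since all representables $\mc{A}(-,W)$ lie in $\mod_{1}\mc{A}$, the Yoneda evaluations $K'(W)\to K(W)$ are bijections, whence $K'\simeq K$. Lastly, the technical hurdle you flag in (i) $\Rightarrow$ (ii) can be avoided inside this paper: having shown (i) $\Rightarrow$ (iii), split the exact sequence $0\to\Ker\phi\to M\to N\to\Cok\phi\to 0$ for a morphism $\phi$ of $\mod_{1}\mc{A}=\mod\mc{A}$ into two short exact sequences and invoke thickness of $\mod\mc{A}$ in $\Mod\mc{A}$ (Lemma \ref{cap-modnA-modinfA}(c)); this yields $\Ker\phi\in\mod\mc{A}=\mod_{1}\mc{A}$ without assembling an explicit two-term presentation.
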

\begin{proof}
It is well known that the statements (i) and (ii) are equivalent.
The statements (i) and (iii) are equivalent by \cite[Proposition 2.7]{Enomoto}. 
\end{proof}
Let $\mc{A}$ be an additive category and $X\in\mc{A}$.
A morphism $e : X\to X$ in $\mc{A}$ is called an \emph{idempotent} if $e^{2}=e$.
We call $\mc{A}$ \emph{idempotent complete} if each idempotent of $\mc{A}$ has a kernel.

Let $k$ be a field.
A \emph{$k$-linear category} $\mc{A}$ is a category such that $\mc{A}(X,Y)$ admits a structure of $k$-modules and the composition of morphisms of $\mc{A}$ is $k$-bilinear.
A contravariant functor $F : \mc{A} \to \mc{B}$ between $k$-linear categories are called \emph{$k$-functor} if $F_{X,Y} : \mc{A}(X,Y)\to\mc{B}(FY,FX)$ is $k$-linear for any $X,Y\in\mc{A}$. 
If $\mc{A}$ is an additive $k$-linear category, then any $\mc{A}$-module can be regarded as a contravariant additive $k$-functor from $\mc{A}$ to $\Mod k$, where $\Mod k$ is the category of $k$-modules.

Let $\mc{A}$ be a $k$-linear additive category.
We call $\mc{A}$ \emph{Hom-finite} if $\mc{A}(X,Y)$ is finitely generated over $k$ for any $X,Y\in\mc{A}$.
We recall one proposition about the Krull-Schmidt property of $k$-linear additive categories.
\begin{proposition}\label{prop-Krull-Schmidt}
Let $\mc{A}$ be a $k$-linear, Hom-finite additive category.
Then the following properties are equivalent.
\begin{itemize}
\item[{\rm (i)}]
$\mc{A}$ is idempotent complete.
\item[{\rm (ii)}]
The endomorphism algebra of each indecomposable object in $\mc{A}$ is local.
\item[{\rm (iii)}]
$\mc{A}$ is Krull-Schmidt, that is, each object of $\mc{A}$ is a finite direct sum of objects whose endomorphism algebras are local.
\end{itemize}
Moreover the decomposition of {\rm (iii)} is unique up to isomorphism.
\end{proposition}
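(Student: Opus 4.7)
The plan is to prove the chain of implications (i) $\Rightarrow$ (iii) $\Rightarrow$ (ii) $\Rightarrow$ (iii) $\Rightarrow$ (i), and then derive the uniqueness statement from the classical exchange argument for objects with local endomorphism rings.

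The implication (iii) $\Rightarrow$ (ii) is essentially tautological: any Krull-Schmidt decomposition of an indecomposable $X$ must be trivial, so $\End(X)$ is local. For (ii) $\Rightarrow$ (iii), the essential point is that Hom-finiteness already forces every object of $\mc{A}$ to admit \emph{some} finite decomposition into indecomposables. I would argue by induction on $\dim_{k}\End(X)$: a nontrivial decomposition $X\simeq Y\oplus Z$ yields $\dim_{k}\End(X)\geq\dim_{k}\End(Y)+\dim_{k}\End(Z)$, and since both $\End(Y)$ and $\End(Z)$ contain the nonzero identity, the strict inequality $\dim_{k}\End(Y)<\dim_{k}\End(X)$ follows (and symmetrically for $Z$). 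Hypothesis (ii) then upgrades indecomposability of each summand to locality of its endomorphism algebra, producing the Krull-Schmidt form.

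The two implications involving (i) rest on the semiperfect structure of endomorphism algebras. For (i) $\Rightarrow$ (iii), the finite-dimensional, hence Artinian, algebra $\End(X)$ admits a complete orthogonal decomposition $1_{X}=e_{1}+\cdots+e_{n}$ into primitive idempotents; idempotent completeness splits each $e_{i}$, yielding $X\simeq X_{1}\oplus\cdots\oplus X_{n}$ with $\End(X_{i})\simeq e_{i}\End(X)e_{i}$ local (by primitivity in the Artinian setting). Conversely, for (iii) $\Rightarrow$ (i), given an idempotent $e\in\End(X)$ and a Krull-Schmidt decomposition $X\simeq\bigoplus_{i}X_{i}$, the algebra $\End(X)$ is semiperfect: its Jacobson radical consists of morphisms whose diagonal components lie in $\rad\End(X_{i})$, and the quotient is the semisimple algebra $\prod_{i}\End(X_{i})/\rad\End(X_{i})$. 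Standard lifting of idempotents in semiperfect rings then shows that $e$ is conjugate by an automorphism of $X$ to a subsum of the canonical projectors $\pi_{i}\colon X\twoheadrightarrow X_{i}\hookrightarrow X$, which manifestly splits in $\mc{A}$; hence so does $e$.

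Uniqueness follows from the standard exchange argument: given two Krull-Schmidt decompositions $X\simeq\bigoplus_{i=1}^{m}X_{i}\simeq\bigoplus_{j=1}^{n}Y_{j}$, locality of $\End(X_{1})$ forces some composite $X_{1}\hookrightarrow X\twoheadrightarrow Y_{j}\hookrightarrow X\twoheadrightarrow X_{1}$ to be an automorphism, yielding $X_{1}\simeq Y_{j}$ as a direct summand, after which one iterates on the remaining summands. The most delicate step is (iii) $\Rightarrow$ (i), where one must carefully identify the Jacobson radical of the ``generalized matrix algebra'' $\End(X)$ in terms of the $\Hom(X_{i},X_{j})$'s and the individual $\rad\End(X_{i})$, and then apply semiperfect-lifting of idempotents to $e$; modulo that identification, the rest reduces to standard ring-theoretic bookkeeping.
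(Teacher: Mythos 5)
The paper states this proposition without proof, recalling it as a standard fact, so there is no in-paper argument to compare against; I will therefore assess your proof on its own merits. The overall structure is sound and follows the standard route: (iii) $\Rightarrow$ (ii) is tautological, (ii) $\Rightarrow$ (iii) by induction on $\dim_{k}\End(X)$ is clean, (i) $\Rightarrow$ (iii) via decomposition of $1_{X}$ into primitive orthogonal idempotents in the Artinian algebra $\End(X)$ is correct, and uniqueness by the exchange lemma is standard.

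For the step you yourself flag as delicate, (iii) $\Rightarrow$ (i), the argument works, but your inline description of the Jacobson radical of $\End(X)$ is imprecise: if some of the $X_{i}$ are isomorphic to one another, $J(\End X)$ is \emph{not} simply the set of morphisms with radical diagonal components (off-diagonal entries between isomorphic summands also matter), and the semisimple quotient is a product of matrix algebras over the division rings $\End(X_{i})/\rad\End(X_{i})$ indexed by isomorphism classes, not literally $\prod_{i}\End(X_{i})/\rad\End(X_{i})$. This slip is harmless for the argument: the facts actually used---that $\End(X)$ is semiperfect (which is automatic from finite-dimensionality, independent of the decomposition), that any idempotent $e$ decomposes into primitive orthogonal idempotents, and that any such decomposition is, up to conjugation by a unit of $\End(X)$, a subsum of the canonical projectors $\pi_{i}$---all hold. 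It is also worth making explicit why splitting a primitive idempotent $f$ in $\End(X)$ produces a splitting in $\mc{A}$: an isomorphism $(\End X)f\simeq(\End X)\pi_{i}$ of projective left modules yields $a\in f(\End X)\pi_{i}$ and $b\in \pi_{i}(\End X)f$ with $ab=f$, $ba=\pi_{i}$, and restricting along the inclusion and projection of $X_{i}$ gives $u:X_{i}\to X$, $v:X\to X_{i}$ with $vu=1_{X_{i}}$ and $uv=f$. Your conjugation formulation packages the same content; just be aware the conjugacy statement is a theorem about semiperfect rings rather than the bare lifting-of-idempotents lemma.
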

\begin{proposition}\label{ex-Krull-Schmidt}
Let $\mc{A}$ be a $k$-linear, Hom-finite additive category.
Then $\mod\mc{A}$ is Krull-Schmidt.
In particular, each object of $\mod\mc{A}$ has a minimal projective resolution.
\end{proposition}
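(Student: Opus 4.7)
The plan is to reduce the Krull--Schmidt claim to Proposition \ref{prop-Krull-Schmidt} by verifying that $\mod\mc{A}$ is a $k$-linear, Hom-finite, idempotent complete additive category, and then to deduce the existence of minimal projective resolutions from the Krull--Schmidt property of $\proj\mc{A}$. The $k$-linear additive structure on $\mod\mc{A}$ is inherited from $\mc{A}$ in the usual way, and idempotent completeness is immediate from Lemma \ref{cap-modnA-modinfA}(c): $\mod\mc{A}$ is thick in the abelian (hence idempotent complete) category $\Mod\mc{A}$, so in particular closed under direct summands.

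The main step is Hom-finiteness. For $M, N \in \mod\mc{A}$, the inclusion $\mod\mc{A} \subseteq \mod_{1}\mc{A}$ gives projective presentations $P_{1} \to P_{0} \to M \to 0$ and $Q_{1} \to Q_{0} \to N \to 0$ with $P_{i}, Q_{j} \in \proj\mc{A}$. By projectivity of $P_{0}$ and $P_{1}$, any $f \colon M \to N$ lifts to a chain map of presentations, and two such lifts differ by a homotopy $P_{0} \to Q_{1}$, presenting $\Hom_{\Mod\mc{A}}(M, N)$ as a subquotient of $\Hom_{\Mod\mc{A}}(P_{0}, Q_{0}) \oplus \Hom_{\Mod\mc{A}}(P_{1}, Q_{1})$. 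Each summand is finite dimensional: every $P_{i}, Q_{j}$ is a direct summand of some representable $\mc{A}(-, X)$, so the relevant Hom space embeds via Yoneda into some $\mc{A}(X, Y)$, which is finite dimensional by hypothesis. Proposition \ref{prop-Krull-Schmidt} then yields that $\mod\mc{A}$ is Krull--Schmidt.

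For the minimal projective resolution, I apply the same verification to $\proj\mc{A}$, which is $k$-linear, Hom-finite (again by Yoneda) and idempotent complete by definition, hence Krull--Schmidt. Given $M \in \mod\mc{A}$, I take any epimorphism $P \to M$ with $P \in \proj\mc{A}$ and, using the standard theory of right-minimal morphisms in a Krull--Schmidt category, factor it as $P = P_{0} \oplus P' \to M$ where the map on $P'$ vanishes and $P_{0} \to M$ is right minimal; this right-minimal epimorphism is a projective cover of $M$. The kernel lies in $\mod\mc{A}$ by thickness, so iterating produces the desired minimal projective resolution. The main technical obstacle in the whole argument is the Hom-finiteness computation; the remaining ingredients are either earlier lemmas or standard facts about Krull--Schmidt categories.
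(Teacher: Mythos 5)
Your proposal is correct and follows essentially the same route as the paper: both reduce to Proposition \ref{prop-Krull-Schmidt} by checking that $\mod\mc{A}$ is a $k$-linear, Hom-finite, idempotent complete additive category, with idempotent completeness coming from closure under direct summands in $\Mod\mc{A}$. The paper's proof is terse (it states Hom-finiteness without argument and leaves the minimal projective resolution clause as a remark), while you fill in those details via the subquotient argument through projective presentations and the standard right-minimal decomposition in a Krull--Schmidt category; both are sound.
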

\begin{proof}
Since $\mod\mc{A}$ is closed under direct summands in $\Mod\mc{A}$, $\mod\mc{A}$ is idempotent complete.
$\mod\mc{A}$ is Hom-finite, since $\mc{A}$ is Hom-finite.
\end{proof}
We recall the definition of dualizing $k$-varieties.
Let $\mc{A}$ be a $k$-linear additive category.
We have contravariant functors $\kD : \Mod\mc{A}\to\Mod\mc{A}^{\rm op}$ and $\kD : \Mod\mc{A}^{\rm op}\to\Mod\mc{A}$ given by $(\kD M)(X):=\kD(M(X))$.
\begin{definition}
Let $\mc{A}$ be a $k$-linear, Hom-finite, idempotent complete additive category.
We call $\mc{A}$ a \emph{dualizing $k$-variety} if the functor $\kD : \Mod\mc{A}\to\Mod\mc{A}^{\rm op}$ induces a duality between $\mod_{1}\mc{A}$ and $\mod_{1}\mc{A}^{\rm op}$.
\end{definition}
The following is typical examples of dualizing $k$-varieties.
\begin{example}\cite{AR74}\label{example-dualizing}
\begin{itemize}
\item[{\rm (a)}]
If $\mc{A}$ is a dualizing $k$-variety, then $\mc{A}^{\rm op}$ is a dualizing $k$-variety.
\item[{\rm (b)}]
Let $A$ be a finite dimensional $k$-algebra and $\mod A$ be the category of finitely generated $A$-modules. Let $\proj A$ be the full subcategory of $\mod A$ consisting of all finitely generated projective $A$-modules.
Then $\mod A$ and $\proj A$ are dualizing $k$-varieties.
\end{itemize}
\end{example}
We  state some properties of dualizing $k$-varieties.
\begin{lemma}\cite{AR74}\label{lem-properties-dualizing}
Let $\mc{A}$ be a dualizing $k$-variety, then we have the following properties.
\begin{itemize}
\item[{\rm (a)}]
$\mc{A}$ and $\mc{A}^{\rm op}$ have weak kernels.
\item[{\rm (b)}]
$\mod\mc{A}$ is a dualizing $k$-variety.
\item[{\rm (c)}]
Each object in $\mod\mc{A}$ has a projective cover and an injective hull.
\end{itemize}
\end{lemma}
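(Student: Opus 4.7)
The plan is to prove (a), (b), (c) in that order, each building on the previous and on the dualizing hypothesis.

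For (a), pick $f : Y \to Z$ in $\mc{A}$ and set $I := \Im \mc{A}(-, f) \subset \mc{A}(-, Z)$ and $C := \Cok \mc{A}(-, f) \in \mod_{1}\mc{A}$ in $\Mod\mc{A}$. Apply the exact functor $\kD$ to the short exact sequence $0 \to I \to \mc{A}(-, Z) \to C \to 0$: the dualizing hypothesis places $\kD C$ and $\kD\mc{A}(-, Z)$ in $\mod_{1}\mc{A}^{\rm op}$, and since cokernels of morphisms between finitely presented modules over any additive category are finitely presented, also $\kD I \in \mod_{1}\mc{A}^{\rm op}$. Dualizing once more gives $\kD\kD I \in \mod_{1}\mc{A}$, and since $I \hookrightarrow \mc{A}(-, Z)$ is pointwise finite-dimensional, the biduality map $I \to \kD\kD I$ is an isomorphism, so $I \in \mod_{1}\mc{A}$. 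The kernel $K$ of $\mc{A}(-, Y) \twoheadrightarrow I$ is then finitely generated, and any epi $\mc{A}(-, X) \twoheadrightarrow K$ yields, via Yoneda, a weak kernel $X \to Y$ of $f$. Since $\mc{A}^{\rm op}$ is also dualizing (Example \ref{example-dualizing}(a)), the same argument delivers weak kernels in $\mc{A}^{\rm op}$.

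For (b), by (a) and Lemma \ref{lem-weak-mod-modinf}, $\mod\mc{A} = \mod_{1}\mc{A}$ is abelian; it is $k$-linear, Hom-finite, and Krull-Schmidt (hence idempotent complete) by Proposition \ref{ex-Krull-Schmidt}. The decisive step is to show that $\kD$ restricts to a duality $\mod_{1}(\mod\mc{A}) \leftrightarrow \mod_{1}((\mod\mc{A})^{\rm op})$. First, each $M \in \mod\mc{A}$ admits an injective copresentation $0 \to M \to \kD\mc{A}(Y_{0},-) \to \kD\mc{A}(Y_{1},-)$ in $\mod\mc{A}$, obtained by dualizing a projective presentation $\mc{A}(Y_{1},-) \to \mc{A}(Y_{0},-) \to \kD M \to 0$ of $\kD M \in \mod_{1}\mc{A}^{\rm op}$ (using $\kD\kD M \simeq M$ pointwise). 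Combined with the co-Yoneda identification $\Hom_{\mod\mc{A}}(N, \kD\mc{A}(Y,-)) \simeq \kD N(Y)$, applying $\Hom_{\mod\mc{A}}(N, -)$ and then $\kD$ to this copresentation produces the natural exact sequence
\begin{align*}
(\mod\mc{A})^{\rm op}(-, \mc{A}(-, Y_{1})) \to (\mod\mc{A})^{\rm op}(-, \mc{A}(-, Y_{0})) \to \kD(\mod\mc{A})(-, M) \to 0
\end{align*}
in $\Mod((\mod\mc{A})^{\rm op})$, showing $\kD(\mod\mc{A})(-, M) \in \mod_{1}((\mod\mc{A})^{\rm op})$. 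For a general $F \in \mod_{1}(\mod\mc{A})$ with a presentation $(\mod\mc{A})(-, M_{1}) \to (\mod\mc{A})(-, M_{0}) \to F \to 0$, the dual $\kD F$ sits as a kernel in $\Mod((\mod\mc{A})^{\rm op})$ of a map between two such finitely presented modules. Since $(\mod\mc{A})^{\rm op}$ is abelian, it has weak kernels, so Lemma \ref{lem-weak-mod-modinf} yields that $\mod_{1}((\mod\mc{A})^{\rm op})$ is closed under kernels in $\Mod((\mod\mc{A})^{\rm op})$; hence $\kD F \in \mod_{1}((\mod\mc{A})^{\rm op})$. Biduality follows from pointwise finite-dimensionality, and the symmetric argument completes the duality.

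For (c), projective covers in $\mod\mc{A}$ exist by Proposition \ref{ex-Krull-Schmidt} (minimal projective resolutions). For injective hulls, apply Proposition \ref{ex-Krull-Schmidt} to $\mc{A}^{\rm op}$ to obtain minimal projective resolutions in $\mod(\mc{A}^{\rm op})$; the duality $\kD : \mod\mc{A} \leftrightarrow \mod(\mc{A}^{\rm op})$ furnished by the dualizing hypothesis and (a) transports these to minimal injective coresolutions in $\mod\mc{A}$, whose first terms are the required injective hulls.

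The main obstacle is the computation in (b) establishing that $\kD(\mod\mc{A})(-, M)$ is finitely presented as a $(\mod\mc{A})^{\rm op}$-module: it requires promoting the dualizing hypothesis for $\mc{A}$ to an injective copresentation inside $\mod\mc{A}$ and combining it with the co-Yoneda identification $\Hom_{\mod\mc{A}}(N, \kD\mc{A}(Y,-)) \simeq \kD N(Y)$, with pointwise finite-dimensionality used repeatedly to pass between $\kD$ and biduality.
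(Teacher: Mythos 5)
The paper cites this lemma to \cite{AR74} without giving a proof, so there is no internal argument to compare against; your reconstruction is a valid proof and is essentially the Auslander--Reiten argument. Parts (a) and (c) are correct as written: in (a) the mechanism of showing $I = \Im\mc{A}(-,f)$ is finitely presented by passing through $\kD I = \Cok(\kD C \to \kD\mc{A}(-,Z))$, using that cokernels of maps between finitely presented modules are finitely presented and that biduality is an isomorphism on the pointwise finite-dimensional module $I \hookrightarrow \mc{A}(-,Z)$, is exactly right, and then a Schanuel-type argument gives that $\Ker(\mc{A}(-,Y)\twoheadrightarrow I)$ is finitely generated.

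In (b), the computation via the co-Yoneda identification $\Hom_{\mod\mc{A}}(N,\kD\mc{A}(Y,-)) \simeq \kD N(Y)$ and the injective copresentation of $M$ obtained by dualizing a projective presentation of $\kD M$ is correct and is the crux of the argument. The one imprecise step is the appeal to Lemma~\ref{lem-weak-mod-modinf} for the claim that $\mod_{1}((\mod\mc{A})^{\rm op})$ is closed under kernels computed in $\Mod((\mod\mc{A})^{\rm op})$: that lemma gives abelianness and $\mod_{1}=\mod$, but not directly that a kernel taken in $\Mod$ of a map of finitely presented modules is again finitely presented. The clean route is via the thickness of $\mod\mc{B}$ in $\Mod\mc{B}$ (Lemma~\ref{cap-modnA-modinfA}(c), valid for any additive $\mc{B}$): given $f : X\to Y$ in $\mod\mc{B}$, first $\Cok f\in\mod\mc{B}$ by Lemma~\ref{M-in-mod-n-A}(b) (splice a projective resolution of $X$ onto $X\to Y\to\Cok f\to 0$), then $\Im f\in\mod\mc{B}$ by thickness applied to $0\to\Im f\to Y\to\Cok f\to 0$, and finally $\Ker f\in\mod\mc{B}$ by thickness applied to $0\to\Ker f\to X\to\Im f\to 0$. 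With this citation repaired, part (b) is complete.
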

Let $\mc{A}$ be a $k$-linear, $\Hom$-finite additive category.
A \emph{Serre functor} on $\mc{A}$ is an auto-equivalence $\mb{S} : \mc{A} \to \mc{A}$ such that there exists a bifunctorial isomorphism 
\begin{align*}
\Hom_{\mc{A}}(X,Y)\simeq\kD\Hom_{\mc{A}}(Y,\mb{S}(X))
\end{align*}
for any $X,Y\in\mc{A}$.
We denote by $\mb{S}^{-1}$ a quasi-inverse of $\mb{S}$.
It is easy to see that if $\mc{A}$ has a Serre functor $\mb{S}$, then $\mc{A}^{\rm op}$ has a Serre functor $\mb{S}^{-1}$.

If $\mc{A}$ has a Serre functor $\mb{S}$, then $(-)^{\ast}$ is described as in the following lemma.
Since $\mb{S}$ is an auto-equivalence, we have an equivalence $\Mod\mc{A}\to\Mod\mc{A}$ given by $M \mapsto M\circ\mb{S}^{-1}$.
By composing the functor $\kD : \Mod\mc{A} \to \Mod\mc{A}^{\rm op}$, we have a contravariant functor $\Mod\mc{A} \to \Mod\mc{A}^{\rm op}$ given by $M \mapsto \kD(M\circ\mb{S}^{-1})$.
We denote by $\Mod_{\rm fg}\mc{A}$ the subcategory of $\Mod\mc{A}$ consisting of $\mc{A}$-modules $M$ such that $M(X)$ is finitely generated over $k$ for any $X\in\mc{A}$.
Note that $\kD$ induces a duality $\Mod_{\rm fg}\mc{A}\to\Mod_{\rm fg}\mc{A}^{\rm op}$ and the categories $\mod_{0}\mc{A}$ and $\ms{GP}\mc{A}$ are contained in $\Mod_{\rm fg}\mc{A}$.
\begin{lemma}\label{lem-Serre-Adual}
Let $\mc{A}$ be a $k$-linear, Hom-finite additive category with a Serre functor $\mb{S}$.
Then the following statements hold.
\begin{itemize}
\item[{\rm (a)}]
We have an isomorphism of functors $(-)^{\ast}\simeq \kD(-\circ\mb{S}^{-1}) : \Mod_{\rm fg}\mc{A} \to \Mod_{\rm fg}\mc{A}^{\rm op}$, and this functor is a duality.
\item[{\rm (b)}]
Let $M\in\Mod\mc{A}$.
The following statements are equivalent.
\begin{itemize}
\item[{\rm (i)}]
$M\in\ms{GP}\mc{A}$.
\item[{\rm (ii)}]
$M\in\mod\mc{A}$ and $M^{\ast}\in\mod\mc{A}^{\rm op}$.
\end{itemize}
\end{itemize}
\end{lemma}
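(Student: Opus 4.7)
The plan is to prove (a) first, extract from it that $(-)^{\ast}$ restricts to an \emph{exact} duality on $\Mod_{\rm fg}\mc{A}$, and then use this exactness to deduce (b) by a splicing argument on projective resolutions.

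For (a), I will start from the Serre duality isomorphism $\mc{A}(Y,X)\simeq\kD\mc{A}(X,\mb{S}Y)$, read naturally in $Y$ as an isomorphism $\mc{A}(-,X)\simeq\kD\mc{A}(X,\mb{S}-)$ in $\Mod\mc{A}$. Substituting into the definition of $M^{\ast}$ and applying the standard adjunction
\[
\Hom_{\Mod\mc{A}}(M,\kD F)\simeq\kD\bigl(F\otimes_{\mc{A}}M\bigr),
\]
together with $\mc{A}(X,\mb{S}-)\simeq\mc{A}(\mb{S}^{-1}X,-)$ (since $\mb{S}$ is an auto-equivalence) and the coYoneda identity $\mc{A}(Y,-)\otimes_{\mc{A}}M\simeq M(Y)$, yields $M^{\ast}(X)\simeq\kD M(\mb{S}^{-1}X)$, naturally in both $M$ and $X$. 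To confirm the duality claim, apply the same formula to $\mc{A}^{\rm op}$ (whose Serre functor is $\mb{S}^{-1}$): for any $M\in\Mod_{\rm fg}\mc{A}$ one computes $(M^{\ast})^{\ast}\simeq\kD(M^{\ast}\circ\mb{S})\simeq\kD\kD M\simeq M$, using that finite-dimensional vector spaces are reflexive.

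The consequence of (a) that will drive (b) is that on $\Mod_{\rm fg}\mc{A}$ the functor $(-)^{\ast}$ factors as $\kD\circ(-\circ\mb{S}^{-1})$, each factor being exact; hence $(-)^{\ast}$ is contravariantly exact there, even though on all of $\Mod\mc{A}$ it is only left exact. The implication (i)$\Rightarrow$(ii) is then immediate from Lemma~\ref{lem-GP-dual-thick}(a). For (ii)$\Rightarrow$(i), choose a projective resolution $\cdots\to P_{1}\to P_{0}\to M\to 0$ of $M$ in $\mod\mc{A}$ and a projective resolution $\cdots\to Q_{1}\to Q_{0}\to M^{\ast}\to 0$ of $M^{\ast}$ in $\mod\mc{A}^{\rm op}$. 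Applying the exact duality $(-)^{\ast}$ to the second, using that $(-)^{\ast}$ swaps $\proj\mc{A}^{\rm op}$ with $\proj\mc{A}$ and that $M^{\ast\ast}\simeq M$, I obtain an exact coresolution $0\to M\to Q_{0}^{\ast}\to Q_{1}^{\ast}\to\cdots$. Splicing with the first resolution produces an acyclic complex $P_{\bullet}$ of finitely generated projective $\mc{A}$-modules with $\Im(P_{0}\to Q_{0}^{\ast})\simeq M$. Under biduality, $(P_{\bullet})^{\ast}$ is the splice of $\cdots\to Q_{1}\to Q_{0}\to M^{\ast}\to 0$ with the coresolution $0\to M^{\ast}\to P_{0}^{\ast}\to P_{1}^{\ast}\to\cdots$ (the latter obtained by applying the exact $(-)^{\ast}$ to the resolution of $M$), and this splice is again acyclic. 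Hence $P_{\bullet}$ is totally acyclic and $M\in\ms{GP}\mc{A}$.

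The main obstacle is really part (a): a priori $(-)^{\ast}=\Hom(-,\mc{A}(-,?))$ is only left exact, and the entire splicing argument for (b) hinges on it being two-sided exact after restricting to $\Mod_{\rm fg}\mc{A}$, a property that is invisible without the Serre-functor identification. Once (a) is in hand, (b) is a routine dualize-and-splice argument.
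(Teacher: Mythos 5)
Your proof is correct and follows essentially the same strategy as the paper: establish the explicit formula $M^{\ast}(X)\simeq\kD M(\mb{S}^{-1}X)$ for (a), deduce that $(-)^{\ast}$ is an exact duality on $\Mod_{\rm fg}\mc{A}$, and prove (ii)$\Rightarrow$(i) in (b) by dualizing a projective resolution of $M^{\ast}$ into a coresolution of $M$, splicing with a projective resolution of $M$, and invoking exactness of $(-)^{\ast}$ for total acyclicity. The only cosmetic difference is in part (a), where you route through the $\Hom$-tensor adjunction and coYoneda, whereas the paper applies the duality $\kD$ directly to $\Hom$-spaces and then Yoneda; both yield the same identification.
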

\begin{proof}
(a)
Let $M\in\Mod_{\rm fg}\mc{A}$ and $X\in\mc{A}$.
We have the following equalities.
\begin{align*}
(M)^{\ast}(X) & =(\Mod\mc{A})(M,\mc{A}(-,X))\\
	& \simeq (\Mod\mc{A}^{\rm op})(\kD\mc{A}(-,X),\kD M)\\
	& \simeq (\Mod\mc{A}^{\rm op})(\mc{A}(\mb{S}^{-1}(X),-),\kD M)\\
	& \simeq \kD(M(\mb{S}^{-1}(X))),
\end{align*}
which functorial on $X$.
Thus we have an isomorphism of functors $(-)^{\ast}\simeq \kD(-\circ\mb{S}^{-1})$.
This functor is a duality, since $\kD$ is a duality and $\mb{S}$ is an equivalence.

(b)
Assume that $M\in\ms{GP}\mc{A}$.
By Lemma \ref{lem-GP-dual-thick} (a), we have $M^{\ast}\in\ms{GP}\mc{A}^{\rm op}$.
In general $\ms{GP}\mc{A}\subset\mod\mc{A}$ holds, thus (i) implies (ii).
Assume that (ii) holds.
There exists an exact sequence $\cdots \to Q_{2} \to Q_{1} \to M^{\ast} \to 0$, where $Q_{i}\in\proj\mc{A}^{\rm op}$.
By (a), $(-)^{\ast}$ is an exact functor.
Therefore we have an exact sequence \[\cdots \to P_{2} \to P_{1} \to P_{0} \xto{d} Q_{1}^{\ast} \to Q_{2}^{\ast} \to \cdots,\]
where $P_{i}, Q_{i}^{\ast}\in\proj\mc{A}$ and $\Im d\simeq M$.
This exact sequence is totally acyclic, since $(-)^{\ast}$ is exact.
We have $M\in\ms{GP}\mc{A}$.
\end{proof}
Later we use the following characterization of dualizing $k$-varieties with Serre functors.
\begin{proposition}\label{dualizing-Serre-Frobenius}
Let $\mc{A}$ be a $k$-linear, Hom-finite, idempotent complete additive category.
Then the following statements are equivalent.
\begin{itemize}
\item[{\rm (i)}]
$\mc{A}$ is a dualizing $k$-variety and has a Serre functor.
\item[{\rm (ii)}]
$\mc{A}$ and $\mc{A}^{\rm op}$ have weak kernels and $\mc{A}$ has a Serre functor.
\item[{\rm (iii)}]
$\ms{GP}\mc{A}=\mod_{1}\mc{A}$, $\ms{GP}\mc{A}^{\rm op}=\mod_{1}\mc{A}^{\rm op}$ hold and $\kD\mc{A}(X,-)\in\mod_{1}\mc{A}$, $\kD\mc{A}(-,X)\in\mod_{1}\mc{A}^{\rm op}$ hold for any $X\in\mc{A}$.
\end{itemize}
\end{proposition}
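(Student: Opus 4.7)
The plan is to prove the equivalences by establishing $(i)\Rightarrow(ii)\Rightarrow(i)$ and $(i)\Rightarrow(iii)\Rightarrow(ii)$. The first implication $(i)\Rightarrow(ii)$ is immediate from Lemma \ref{lem-properties-dualizing}(a), which gives weak kernels in $\mc{A}$ and $\mc{A}^{\rm op}$ from the dualizing property.

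For $(ii)\Rightarrow(i)$, I use the Serre functor $\mb{S}$ to produce bifunctorial isomorphisms $\kD\mc{A}(X,-)\simeq\mc{A}(-,\mb{S} X)$ in $\Mod\mc{A}$ and $\kD\mc{A}(-,X)\simeq\mc{A}(\mb{S}^{-1}X,-)$ in $\Mod\mc{A}^{\rm op}$, so $\kD$ sends representables to representables. By Lemma \ref{lem-weak-mod-modinf} the weak kernel hypotheses make both $\mod_1\mc{A}$ and $\mod_1\mc{A}^{\rm op}$ abelian. Given $M\in\mod_1\mc{A}$ with presentation $P_1\to P_0\to M\to 0$, applying $\kD$ exhibits $\kD M$ as the kernel of a morphism between projective $\mc{A}^{\rm op}$-modules, so $\kD M\in\mod_1\mc{A}^{\rm op}$. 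Combined with the symmetric statement and $\kD\circ\kD\simeq\operatorname{id}$ on Hom-finite functors, $\kD$ restricts to the required duality.

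For $(i)\Rightarrow(iii)$, weak kernels give $\mod_1\mc{A}=\mod\mc{A}$ on both sides by Lemma \ref{lem-weak-mod-modinf}, and the representability $\kD\mc{A}(X,-)\simeq\mc{A}(-,\mb{S} X)\in\proj\mc{A}$ lands in $\mod_1\mc{A}$ (and symmetrically). The main content is the equality $\ms{GP}\mc{A}=\mod\mc{A}$: by Lemma \ref{lem-Serre-Adual}(b) I need only verify $M^\ast\in\mod\mc{A}^{\rm op}$ for every $M\in\mod\mc{A}$, and Lemma \ref{lem-Serre-Adual}(a) rewrites $M^\ast\simeq\kD(M\circ\mb{S}^{-1})$, which lies in $\mod_1\mc{A}^{\rm op}=\mod\mc{A}^{\rm op}$ because $\kD$ is a duality on a dualizing $k$-variety. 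The equality for $\mc{A}^{\rm op}$ is symmetric.

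The crucial implication is $(iii)\Rightarrow(ii)$, where I must build a Serre functor from the hypotheses. The key observation is that $\kD\mc{A}(X,-)$ is injective in $\Mod\mc{A}$, because $\Hom_{\Mod\mc{A}}(-,\kD\mc{A}(X,-))\simeq\kD((-)(X))$ is exact, while by $(iii)$ it also lies in $\ms{GP}\mc{A}$. Since $\ms{GP}\mc{A}$ is Frobenius with $\proj\mc{A}$ as relative-injectives (Proposition \ref{GP-Frobenius}), $\kD\mc{A}(X,-)$ embeds into a representable inside $\ms{GP}\mc{A}$, and injectivity in $\Mod\mc{A}$ forces this embedding to split; idempotent completeness then makes $\kD\mc{A}(X,-)$ itself representable, say $\kD\mc{A}(X,-)\simeq\mc{A}(-,\mb{S} X)$. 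A symmetric argument on $\mc{A}^{\rm op}$ yields a quasi-inverse candidate, and Yoneda bookkeeping upgrades $\mb{S}$ to a Serre functor. For weak kernels in $\mc{A}$, given $f:Y\to Z$ the cokernel of $f_\ast:\mc{A}(-,Y)\to\mc{A}(-,Z)$ is automatically in $\mod_1\mc{A}=\ms{GP}\mc{A}$; by Lemma \ref{lem-GP-dual-thick}(b), $\ms{GP}\mc{A}$ is closed under kernels of epimorphisms, so $\Im f_\ast\in\mod_1\mc{A}$ too. Schanuel's lemma (Lemma \ref{lem-Schanuel}(b)) applied to two presentations of $\Im f_\ast$ then shows that $\Ker f_\ast$ is finitely generated, and lifting an epimorphism onto it through Yoneda yields a weak kernel of $f$. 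The main obstacle I anticipate is precisely this Serre functor construction: combining Gorenstein-projectivity with injectivity in $\Mod\mc{A}$ and then verifying that $\mb{S}$ is a genuine auto-equivalence rather than an object-level assignment requires careful tracking of the naturality of several isomorphisms.
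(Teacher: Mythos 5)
Your proof follows the paper's approach closely: the same cycle of implications $(i)\Rightarrow(ii)\Rightarrow(i)$ and $(i)\Rightarrow(iii)\Rightarrow(ii)$, the same use of Lemmas \ref{lem-weak-mod-modinf}, \ref{lem-Serre-Adual} and Proposition \ref{GP-Frobenius}, and for $(iii)\Rightarrow(ii)$ the same core idea that $\kD\mc{A}(X,-)$ lies in $\ms{GP}\mc{A}$ and is injective, hence relative-projective by the Frobenius property, hence representable by idempotent completeness. Two minor differences worth noting: the paper derives injectivity of $\kD\mc{A}(X,-)$ from the duality $\kD:\Mod_{\rm fg}\mc{A}^{\rm op}\to\Mod_{\rm fg}\mc{A}$ rather than the pointwise adjunction $\Hom(N,\kD\mc{A}(X,-))\simeq\kD(N(X))$ you use, and your weak-kernel argument via cokernels, kernels of epimorphisms, and Schanuel is longer than needed --- from $\ms{GP}\mc{A}=\mod_{1}\mc{A}$ and the general inclusions $\ms{GP}\mc{A}\subset\mod\mc{A}\subset\mod_{1}\mc{A}$ one gets $\mod\mc{A}=\mod_{1}\mc{A}$ immediately, so Lemma \ref{lem-weak-mod-modinf} applies at once.
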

\begin{proof}
By Lemma \ref{lem-properties-dualizing}, {\rm (i)} implies {\rm (ii)}.
We show that {\rm (ii)} implies {\rm (i)}.
Let $M\in\mod_{1}\mc{A}$.
We show that $\kD M$ is in $\mod_{1}\mc{A}^{\rm op}$.
There exists an exact sequence $P_{1}\to P_{0}\to M \to 0$ for some $P_{1},P_{0}\in\proj\mc{A}$.
By the functor $\kD : \Mod\mc{A}\to\Mod\mc{A}^{\rm op}$, we have an exact sequence $0\to \kD M \to \kD P_{0}\to\kD P_{1}$ in $\Mod\mc{A}$.
Since $\mc{A}$ has a Serre functor, we have $\kD P_{1}, \kD P_{0}\in\proj\mc{A}^{\rm op}$.
Since $\mc{A}^{\rm op}$ has weak kernels, $\kD M$ is in $\mod_{1}\mc{A}^{\rm op}$.
By the dual argument, for any $N\in\mod_{1}\mc{A}^{\rm op}$, we have $\kD N\in\mod_{1}\mc{A}$.
Thus $\kD : \mod_{1}\mc{A}\to\mod_{1}\mc{A}^{\rm op}$ is a duality.

We show that {\rm (i)} implies {\rm (iii)}.
Since $\mc{A}$ is a dualizing $k$-variety, $\kD\mc{A}(X,-)\in\mod_{1}\mc{A}$, $\kD\mc{A}(-,X)\in\mod_{1}\mc{A}^{\rm op}$ hold for any $X\in\mc{A}$.
By Lemma \ref{lem-weak-mod-modinf}, we have $\mod\mc{A}=\mod_{1}\mc{A}$ and $\mod\mc{A}^{\rm op}=\mod_{1}\mc{A}^{\rm op}$.
In general $\ms{GP}\mc{A}\subset\mod\mc{A}$ holds.
Let $M\in\mod\mc{A}$.
We show that $M\in\ms{GP}\mc{A}$.
Since $\mc{A}$ is a dualizing $k$-variety, $\kD M\in\mod\mc{A}^{\rm op}$ holds.
By Lemma \ref{lem-Serre-Adual} (a), $M^{\ast}\in\mod\mc{A}^{\rm op}$ holds.
Thus by Lemma \ref{lem-Serre-Adual} (b), $M\in\ms{GP}\mc{A}$ holds.

We show that {\rm (iii)} implies {\rm (ii)}.
In general, $\ms{GP}\mc{A}\subset \mod\mc{A}\subset \mod_{1}\mc{A}$ holds.
Therefore by Lemma \ref{lem-weak-mod-modinf}, $\mc{A}$ and $\mc{A}^{\rm op}$ have weak kernels.
Consider the functor $\kD\circ(-)^{\ast} : \Mod\mc{A}\to\Mod\mc{A}$.
This functor induces an equivalence $\proj\mc{A} \xto{\sim}\proj\mc{A}$.
In fact, if $M\in\proj\mc{A}$, then $M^{\ast}\in\proj\mc{A}^{\rm op}$.
By the assumption, we have $\kD(M^{\ast})\in\mod_{1}\mc{A}=\ms{GP}\mc{A}$.
Since $\kD : \Mod_{\rm fg}\mc{A}^{\rm op} \to \Mod_{\rm fg}\mc{A}$ is a duality, $\kD(M^{\ast})$ is an injective object of $\Mod_{\rm fg}\mc{A}$.
In particular, $\kD(M^{\ast})$ is a relative-injective object of $\ms{GP}\mc{A}$.
Since $\ms{GP}\mc{A}$ is Frobenius, $\kD(M^{\ast})$ is an object of $\proj\mc{A}$.
Thus we have a functor $\kD\circ(-)^{\ast} : \proj\mc{A} \to \proj\mc{A}$.
This is an equivalence, since its quasi-inverse is given by $(-)^{\ast}\circ\kD$.
Since $\mc{A}$ is idempotent complete, the Yoneda embedding $\mc{A}\to\proj\mc{A}$, $X\mapsto\mc{A}(-,X)$ is equivalence.
Thus there exists an equivalence $\mb{S} : \mc{A} \to \mc{A}$ such that the following diagram is commutative:
$$\xymatrix{
\proj\mc{A} \ar[rr]^{\kD\circ(-)^{\ast}} && \proj\mc{A} \\
\mc{A} \ar[rr]^{\mb{S}} \ar[u]^{\simeq} && \mc{A}\ar[u]^{\simeq}.
}$$
For $X,Y\in\mc{A}$, we have the following isomorphisms which are functorial at $X,Y$:
\begin{align*}
\mc{A}(Y,\mb{S}X) & \simeq \kD(\mc{A}(-,X)^{\ast})(Y)\\
	& \simeq \kD(\Mod\mc{A}(\mc{A}(-,X),\mc{A}(-,Y)))\\
	& \simeq \kD\mc{A}(X,Y).
\end{align*}
This means that $\mb{S}$ is a Serre functor on $\mc{A}$.
\end{proof}
\subsection{Some observations on triangulated categories}
In this subsection, we state some propositions which we use later.
We state one theorem for Frobenius categories.
Let $\mc{A}$ be an additive category and $\mc{B}$ be a subcategory of $\mc{A}$.
For two objects $X,Y\in\mc{A}$, we denote by $\mc{A}_{\mc{B}}(X,Y)$ the subspace of $\mc{A}(X,Y)$ consisting of all morphisms which factor through an object of $\mc{B}$.
We denote by $\mc{A}/[\mc{B}]$ the category defined as follows:
the objects of $\mc{A}/[\mc{B}]$ are the same as $\mc{A}$ and the morphism space is defined by
\begin{align*}
(\mc{A}/[\mc{B}])(X,Y):=\mc{A}(X,Y)/\mc{A}_{\mc{B}}(X,Y),
\end{align*}
for $X,Y\in\mc{A}$.

Let $\mc{F}$ be a Frobenius category, $\mc{P}$ the full subcategory of $\mc{F}$ consisting of the projective objects in $\mc{F}$ and $\U{\mc{F}}:=\mc{F}/[\mc{P}]$.
By Happel \cite{H}, it is known that $\U{\mc{F}}$ is a triangulated category.
Assume that $\mc{P}$ is idempotent complete.
Let $\ms{K}^{\rm b}(\mc{P})$ be the homotopy category of complexes of $\mc{P}$.
We denote by $\ms{K}^{\rm-,b}(\mc{P})$ the full subcategory of $\ms{K}(\mc{P})$ consisting of complexes $X=(X^{i}, d^{i} : X^{i}\to X^{i+1})$ satisfying the following conditions.
\begin{itemize}
\item[{\rm (1)}]
There exists $n_{X}\in\mb{Z}$ such that $X^{i}=0$ for any $i>n_{X}$.
\item[{\rm (2)}]
There exist $m_{X}\in\mb{Z}$ and exact sequences $0\to Y^{i-1} \xto{a^{i-1}} X^{i} \xto{b^{i}} Y^{i} \to 0$  in $\mc{F}$ for any $i\leq m_{X}$ such that $d^{i}=a^{i}b^{i}$ for any $i<m_{X}$.
\end{itemize}
We identify the category $\mc{F}$ with the full subcategory of $\ms{K}^{\rm -,b}(\mc{P})$ consisting of $X$ satisfying $n_{X}\leq 0\leq m_{X}$.
Then we have the following analogy of the well known equivalence due to \cite{Buchweitz, Keller-Vossieck, R}.
\begin{theorem}\cite{IY}\label{rickard-theorem}
Let $\mc{F}$ be a Frobenius category and $\mc{P}$ the full subcategory of $\mc{F}$ consisting of the projective objects.
Assume that $\mc{P}$ is idempotent complete.
Then the composite $\mc{F}\to\ms{K}^{\rm-, b}(\mc{P})\to\ms{K}^{\rm-, b}(\mc{P})/\ms{K}^{\rm b}(\mc{P})$ induces a triangle equivalence $\U{\mc{F}}\xto{\sim} \ms{K}^{\rm-, b}(\mc{P})/\ms{K}^{\rm b}(\mc{P})$.
\end{theorem}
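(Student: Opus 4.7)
The plan is to follow the classical Buchweitz--Keller--Vossieck--Rickard strategy, adapted to the Frobenius category setting. The functor in the statement is the composite of a choice of projective resolution $\Phi : \mc{F} \to \ms{K}^{-,b}(\mc{P})$, sending $X \in \mc{F}$ to its deleted resolution $\cdots \to P^{-1} \to P^0 \to 0$ concentrated in degrees $\le 0$, followed by the Verdier quotient functor. Since $\mc{F}$ has enough projectives, this resolution is unique up to chain homotopy, so $\Phi$ is well defined. Any morphism $f : X \to Y$ that factors through a projective $Q$ lifts to a chain map factoring through the stalk complex $Q[0] \in \ms{K}^b(\mc{P})$, hence vanishes in the Verdier quotient; so $\Phi$ descends to a functor $\bar\Phi : \U{\mc{F}} \to \ms{K}^{-,b}(\mc{P})/\ms{K}^b(\mc{P})$. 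The horseshoe lemma, combined with the defining short exact sequence $0 \to X \to I \to X[1] \to 0$ of the Happel shift, shows that $\bar\Phi$ is triangulated and intertwines the two suspensions.

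For density, given $C \in \ms{K}^{-,b}(\mc{P})$, shift to arrange $m_C = 0$. The canonical triangle $\sigma_{>0}C \to C \to \sigma_{\le 0}C \to \sigma_{>0}C[1]$ in $\ms{K}^{-,b}(\mc{P})$ has third term a bounded complex of projectives and hence lies in $\ms{K}^b(\mc{P})$, so $C \cong \sigma_{\le 0}C$ in the quotient. A direct check using condition (2) shows that $\sigma_{\le 0}C$ is the projective resolution of the corresponding $Y^0 \in \mc{F}$, so $C \cong \Phi(Y^0)$ in the quotient (after compensating by the shift, which by the previous paragraph corresponds to iterated Happel (co)syzygies of $Y^0$).

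For full faithfulness, a morphism from $\Phi(X)$ to $\Phi(Y)$ in the Verdier quotient is represented by a roof $\Phi(X) \xleftarrow{s} C \to \Phi(Y)$ with $\mathrm{cone}(s) \in \ms{K}^b(\mc{P})$. Since the cone is bounded, $s$ is a termwise isomorphism in sufficiently negative degrees, and by projectivity of each component the roof can be replaced by an honest chain map $\Phi(X) \to \Phi(Y)$ up to homotopy, which then descends to a morphism $X \to Y$ in $\mc{F}$ via the induced map on the degree-$0$ cokernel. Conversely, a chain map $\Phi(f)$ that vanishes in the quotient factors in $\ms{K}^{-,b}(\mc{P})$ through a bounded complex of projectives, and a truncation argument exhibits the degree-$0$ component of this factorization as a factorization of $f$ through a single projective of $\mc{F}$. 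This last step is the main obstacle: manipulating roofs in the Verdier quotient and translating chain-level factorizations through $\ms{K}^b(\mc{P})$ back into factorizations through single projectives in $\mc{F}$ requires an iterative use of horseshoe and stupid-truncation arguments, together with careful bookkeeping of the identification of $\mc{F}$ with the subcategory $\{n_X \le 0 \le m_X\}$ of $\ms{K}^{-,b}(\mc{P})$.
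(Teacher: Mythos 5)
The paper does not actually prove this statement: it is quoted as Theorem \ref{rickard-theorem} from the reference \cite{IY} (Iyama--Yang, \emph{Quotients of triangulated categories and Equivalences of Buchweitz, Orlov and Amiot-Guo-Keller}, listed as ``in preparation''), and the paper simply notes that it is an analog of the classical Buchweitz/Keller--Vossieck/Rickard equivalence. So there is no internal proof for your attempt to be compared against.

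Judged on its own, your sketch follows the correct classical strategy, and the well-definedness, descent to $\U{\mc{F}}$, triangulatedness, and density steps are in good shape. However, the full faithfulness argument has a genuine gap, which you yourself flag: the assertion that a roof $\Phi(X)\xleftarrow{s} C \to \Phi(Y)$ with $\operatorname{cone}(s)\in\ms{K}^{\rm b}(\mc{P})$ ``can be replaced by an honest chain map $\Phi(X)\to\Phi(Y)$ up to homotopy'' is not justified, and in general $s$ does not admit a homotopy section. The standard repair is to push far enough to the left using brutal truncations: for $n\gg 0$ the inclusion $\sigma_{\le -n}\Phi(X)\hookrightarrow\Phi(X)$ has bounded cone and $s$ becomes a termwise isomorphism in degrees $\le -n$, so the roof can be rewritten with apex $\sigma_{\le -n}\Phi(X)\simeq\Phi(\Omega^{n}X)[n]$; one must then identify morphisms $\Phi(\Omega^{n}X)[n]\to\Phi(Y)$ in $\ms{K}^{-,b}(\mc{P})$ with stable maps $\Omega^{n}X\to\Omega^{n}Y$ and show the resulting diagram is independent of $n$ --- this iterated dimension-shifting is precisely the content of the theorem and cannot be deferred to ``careful bookkeeping.'' The converse direction (a chain map vanishing in the quotient factors through a single projective in $\mc{F}$) needs the same truncation machinery and a horseshoe argument to collapse a factorization through an object of $\ms{K}^{\rm b}(\mc{P})$ to a factorization through one $Q\in\mc{P}$. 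Finally, note that the hypothesis that $\mc{P}$ is idempotent complete --- needed, for instance, so that $\ms{K}^{\rm b}(\mc{P})$ is a thick subcategory of $\ms{K}^{-,b}(\mc{P})$ and so that brutal truncations of objects of $\ms{K}^{-,b}(\mc{P})$ stay inside it --- is never invoked in your sketch; a complete proof must use it.
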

Let $\mc{U}$ be a triangulated category and $\mc{X}$ be a full subcategory of $\mc{U}$.
We call $\mc{X}$ a \emph{thick} subcategory of $\mc{U}$ if $\mc{X}$ is a triangulated subcategory of $\mc{U}$ and closed under direct summands.
We denote by $\thick_{\mc{U}}\mc{X}$ the smallest thick subcategory of $\mc{U}$ which contains $\mc{X}$.
Whenever if there is no danger of confusion, let $\thick_{\mc{U}}\mc{X}=\thick\mc{X}$.
\begin{lemma}\label{basic-triangulated-functor}
Let $\mc{T},\mc{U}$ be triangulated categories and $F : \mc{U}\to\mc{T}$ a triangle functor.
Let $\mc{X}$ be a full subcategory of $\mc{U}$.
Then the following holds.
\begin{itemize}
\item
Assume that a map 
\begin{align*}
F_{M,N[n]} : \mc{U}(M,N) \to \mc{T}(FM,FN[n])
\end{align*}
is an isomorphism for any $M,N\in\mc{X}$ and any $n\in\mb{Z}$.
Then $F : \thick\mc{X}\to\mc{T}$ is fully faithful.
\item
If moreover $\mc{U}$ is idempotent complete, $\thick\mc{X}=\mc{U}$ and $\thick(\Im(F))=\mc{T}$, then $F$ is an equivalence.
\end{itemize}
\end{lemma}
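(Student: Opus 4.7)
The plan is to prove both parts by a standard ``thick-subcategory argument'', exploiting that ``being an isomorphism'' is a condition preserved by five-lemma on the long exact Hom sequences.

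For the first part, I would fix the second variable and vary the first. Let $\mc{Y}$ be the full subcategory of $\thick\mc{X}$ consisting of objects $M$ such that the map
\[
F_{M,N[n]} : \mc{U}(M,N[n]) \to \mc{T}(FM,FN[n])
\]
is an isomorphism for every $N\in\mc{X}$ and every $n\in\mb{Z}$. By hypothesis, $\mc{X}\subseteq\mc{Y}$. The subcategory $\mc{Y}$ is stable under shifts because $F$ commutes with $[1]$, and stable under direct summands because $\mc{U}(-,N[n])$ and $\mc{T}(F-,FN[n])$ are both additive. If $M_1\to M_2\to M_3\to M_1[1]$ is a triangle in $\mc{U}$ with $M_1,M_2\in\mc{Y}$, then applying $\mc{U}(-,N[n])$ and $\mc{T}(F-,FN[n])$ gives two long exact sequences connected by the natural transformation $F_{-,N[n]}$; the five lemma then shows $M_3\in\mc{Y}$. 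Hence $\mc{Y}$ is thick in $\thick\mc{X}$, so $\mc{Y}=\thick\mc{X}$. I then repeat the argument in the second variable: let $\mc{Z}$ be the full subcategory of $\thick\mc{X}$ consisting of $N$ such that $F_{M,N[n]}$ is an isomorphism for every $M\in\thick\mc{X}$ and every $n\in\mb{Z}$. The first step shows $\mc{X}\subseteq\mc{Z}$, and the same thickness argument yields $\mc{Z}=\thick\mc{X}$. This is precisely full faithfulness of $F$ on $\thick\mc{X}$.

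For the second part, the first part applied with $\thick\mc{X}=\mc{U}$ gives that $F:\mc{U}\to\mc{T}$ is fully faithful. It then suffices to show $F$ is essentially surjective. Consider the essential image $\Im(F)$, closed under isomorphism. It is stable under shifts because $F(M[1])\simeq FM[1]$. Given $f:FM\to FN$, full faithfulness yields $g:M\to N$ with $Fg=f$; any triangle on $g$ is sent by $F$ to a triangle on $f$, so $\Im(F)$ is stable under cones. Finally, since $\mc{U}$ is idempotent complete, every idempotent $e:FM\to FM$ in $\mc{T}$ lifts via full faithfulness to an idempotent on $M$ in $\mc{U}$, whose image in $\mc{U}$ is sent by $F$ to a splitting of $e$ in $\mc{T}$; hence $\Im(F)$ is closed under direct summands. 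Therefore $\Im(F)$ is a thick subcategory of $\mc{T}$, so $\thick(\Im(F))\subseteq\Im(F)$, and by hypothesis $\Im(F)=\mc{T}$.

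No step is really hard; the only mild subtlety is the closure of $\Im(F)$ under direct summands, which is exactly where the idempotent completeness of $\mc{U}$ is needed — without it one would only obtain an equivalence up to idempotent completion. I would keep the proof short and cite the five-lemma and Yoneda-type manipulations explicitly rather than spelling out every diagram chase.
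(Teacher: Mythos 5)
Your proof is correct, and it is the standard d\'evissage argument. The paper states this lemma without proof, evidently treating it as folklore, so there is nothing to compare it against: your two-step full-faithfulness argument (fix one variable, show by the five lemma that the class of objects where $F$ is fully faithful is a thick subcategory, then repeat in the other variable) together with your verification that the essential image is thick (closed under shifts and cones by the triangle axiom, and under direct summands via lifting idempotents through the now fully faithful $F$ into the idempotent complete $\mc{U}$) is exactly the expected proof. One tiny remark: the paper's displayed map has domain $\mc{U}(M,N)$, which is a typo for $\mc{U}(M,N[n])$; you have silently read it the correct way.
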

\section{Repetitive categories}\label{section-repetitive}
\subsection{Repetitive categories}\label{subsection-repetitive-dualizing}
We recall the definition of repetitive categories of additive categories.
The aim of this subsection is to show Theorem \ref{repetitive-dualizing}.
\begin{definition}\label{defition-repetitive}
Let $\mc{A}$ be a $k$-linear additive category.
The \emph{repetitive category} $\ms{R}\mc{A}$ is the $k$-linear additive category generated by the following category:
the class of objects is $\{ (X,i) \mid X \in\mc{A}, i\in\mb{Z} \}$ and the morphism space is given by 
	\begin{align*}
	\ms{R}\mc{A}\bigl((X,i), (Y,j)\bigr)=\begin{cases}
					\mc{A}(X,Y) & i=j, \\
					\kD\mc{A}(Y,X) & j=i+1, \\
					0 & \textnormal{else}.
				\end{cases}
	\end{align*}
For $f\in\ms{R}\mc{A}\bigl((X,i), (Y,j)\bigr)$ and $g\in\ms{R}\mc{A}\bigl((Y,j), (Z,k)\bigr)$, the composition is given by 
	\begin{align*}
	g\circ f=\begin{cases}
					g\circ f & i=j=k, \\
					\bigl(\kD\mc{A}(Z,f)\bigr)(g) & i=j=k-1, \\
					\bigl(\kD\mc{A}(g,X)\bigr)(f) & i+1=j=k, \\
					0 & \textnormal{else}.
				\end{cases}
	\end{align*}
\end{definition}
We describe fundamental properties of repetitive categories of $\Hom$-finite categories.
\begin{lemma}\label{properties-of-repetitive}
Let $\mc{A}$ be a $k$-linear, Hom-finite additive category.
The following statements hold.
\begin{itemize}
\item[{\rm (a)}]
$\ms{R}\mc{A}$ is $\Hom$-finite.
\item[{\rm (b)}]
$\ms{R}\mc{A}$ has a Serre functor $\mb{S}$ which is defined by $\mb{S}(X,i):=(X,i+1)$.
\item[{\rm (c)}]
If $\mc{A}$ is idempotent complete, then so is $\ms{R}\mc{A}$.
\end{itemize}
\end{lemma}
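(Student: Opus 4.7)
Parts (a) and (b) are routine from the definitions; the substance lies in (c).

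For (a), every object of $\ms{R}\mc{A}$ is a finite direct sum of generators $(X,i)$, and each Hom space between generators is one of $\mc{A}(X,Y)$, $\kD\mc{A}(Y,X)$, or zero --- all finite-dimensional over $k$ by Hom-finiteness of $\mc{A}$ and the fact that $\kD$ preserves finite-dimensionality. Hom-finiteness then propagates to finite direct sums.

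For (b), I would define $\mb{S}$ on morphisms via the canonical identifications $\ms{R}\mc{A}((X,i),(Y,i)) = \ms{R}\mc{A}((X,i+1),(Y,i+1)) = \mc{A}(X,Y)$ and $\ms{R}\mc{A}((X,i),(Y,i+1)) = \ms{R}\mc{A}((X,i+1),(Y,i+2)) = \kD\mc{A}(Y,X)$. The composition rule in Definition \ref{defition-repetitive} depends only on relative differences of the second coordinate, so $\mb{S}$ is a well-defined $k$-linear functor; the assignment $(X,i) \mapsto (X,i-1)$ gives a quasi-inverse, so $\mb{S}$ is an auto-equivalence. The Serre duality isomorphism $\ms{R}\mc{A}(U,V) \simeq \kD\ms{R}\mc{A}(V,\mb{S}U)$ reduces by bi-additivity to $U = (X,i)$, $V = (Y,j)$ and the two non-vanishing cases: for $j=i$ both sides equal $\mc{A}(X,Y)$ (using $\kD\kD = \mathrm{id}$ on finite-dimensional spaces), and for $j=i+1$ both sides equal $\kD\mc{A}(Y,X)$; bifunctoriality is inherited from the defining Hom-identifications.

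For (c), let $e \colon Z \to Z$ be an idempotent in $\ms{R}\mc{A}$ and set $E = \End_{\ms{R}\mc{A}}(Z)$. The key structural observation is that $E = E_0 \oplus E_1$, where $E_0$ collects the degree-preserving endomorphisms and $E_1$ those shifting the second coordinate by one (no other components exist), and moreover $E_1 \cdot E_1 = 0$ because compositions of two degree-raising morphisms land in Hom spaces of the form $\ms{R}\mc{A}((X,i),(Z,i+2)) = 0$. Thus $E_1$ is a square-zero two-sided ideal of $E$, and $E_0 \simeq \prod_i \End_\mc{A}(\tilde{Z}_i)$ via the decomposition $Z = \bigoplus_i Z_i$ by second coordinate, where $\tilde{Z}_i$ is $Z_i$ viewed through the fully faithful embedding $\mc{A} \hookrightarrow \ms{R}\mc{A}$, $X \mapsto (X,i)$. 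Writing $e = d + n$ with $d \in E_0$, $n \in E_1$, the relation $e^2 = e$ forces $d^2 = d$ and $dn + nd = n$. Idempotent completeness of $\mc{A}$ lets me split each component of $d$ as $\tilde{Z}_i = \tilde{W}_i \oplus \tilde{W}_i'$ with $d_i$ the projection onto $\tilde{W}_i$, yielding $Z = W \oplus W'$ in $\ms{R}\mc{A}$ with $d$ in matrix form equal to $\mathrm{diag}(1_W, 0)$. Writing $n$ as a block matrix $(n_{ab})$, the equation $dn + nd = n$ forces $n_{11} = 0 = n_{22}$, so $n$ is purely off-diagonal; the compositions $n_{12} n_{21}$ and $n_{21} n_{12}$ vanish automatically from $E_1 \cdot E_1 = 0$. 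A direct check then shows that the morphisms $s \colon W \to Z$ with components $(1_W, n_{21})$ and $p \colon Z \to W$ with components $(1_W, n_{12})$ satisfy $ps = 1_W$ and $sp = e$, producing the desired splitting.

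The main obstacle is (c): idempotent completeness of $\mc{A}$ alone does not immediately yield a splitting of $e$, because $e$ mixes degree-preserving and degree-raising components. The crucial observation is the square-zero ideal structure on $E_1$, which, combined with the splitting of $d$ in $\mc{A}$, pins down $e$ to a rigid $2 \times 2$ form from which the explicit splitting $(s,p)$ can be read off.
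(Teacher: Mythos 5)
Your treatment of (a) and (b) matches what the paper leaves implicit ("clear by the definition"), and your reduction of the Serre duality to the two nontrivial cases of generator pairs is the natural one. For (c) you take a genuinely different route from the paper. The paper invokes the Krull--Schmidt criterion of Proposition \ref{prop-Krull-Schmidt}: since $\mc{A}$ is Hom-finite and idempotent complete it is Krull--Schmidt, so every object of $\ms{R}\mc{A}$ is a finite direct sum of objects $(X,i)$ with $X$ indecomposable in $\mc{A}$, each such $(X,i)$ has local endomorphism algebra $\End_{\ms{R}\mc{A}}(X,i)\simeq\End_{\mc{A}}(X)$, and the implication (iii)$\Rightarrow$(i) of that proposition then yields idempotent completeness of $\ms{R}\mc{A}$. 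You instead split an arbitrary idempotent $e$ by hand, exploiting the grading of $\ms{R}\mc{A}$: the decomposition $\End_{\ms{R}\mc{A}}(Z)=E_0\oplus E_1$ with $E_1$ a square-zero two-sided ideal, the splitting in $\mc{A}$ of the degree-zero idempotent $d$, and an explicit section--retraction pair $(s,p)$ read off from the forced off-diagonal block form of the nilpotent part $n$. All your verifications are correct: $n_{11}=n_{22}=0$ from $dn+nd=n$, the vanishing of $n_{12}n_{21}$ and $n_{21}n_{12}$ from $E_1\cdot E_1=0$, and $ps=1_W$, $sp=e$. Your argument is more elementary and self-contained, and notably does not use Hom-finiteness of $\mc{A}$ at all, only $k$-linearity, additivity and idempotent completeness, so it establishes (c) under weaker hypotheses than stated. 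The paper's route is shorter given the Krull--Schmidt proposition already on the table, at the cost of implicitly classifying the indecomposable objects of $\ms{R}\mc{A}$.
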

\begin{proof}
(a) (b) These are clear by the definition.

(c) By the definition, an object of $\ms{R}\mc{A}$ is indecomposable if and only if it is isomorphic to an object $(X,i)$,
where $X$ is an indecomposable object of $\mc{A}$ and $i$ is some integer.
Let $X$ be an indecomposable object of $\mc{A}$ and $i$ be an integer.
Since $\mc{A}$ is idempotent complete and Proposition \ref{prop-Krull-Schmidt}, $\End_{\ms{R}\mc{A}}(X,i)=\End_{\mc{A}}(X)$ is local.
Therefore again by Proposition \ref{prop-Krull-Schmidt}, $\ms{R}\mc{A}$ is idempotent complete.
\end{proof}
We see a relation between the categories $\mod\mc{A}$ and $\mod\ms{R}\mc{A}$ and consequently, we show Theorem \ref{repetitive-dualizing}.
Let $\mc{A}$ be a $k$-linear additive category and $i\in\mb{Z}$.
Put the following full subcategory of $\ms{R}\mc{A}$:
\begin{align*}
\mc{A}_{i}:=\add\{\, (X,i) \in\ms{R}\mc{A} \mid X\in\mc{A} \,\}.
\end{align*}
An inclusion functor $\mc{A}_{i} \to \ms{R}\mc{A}$ induces an exact functor
\begin{align*}
\rho_{i} : \Mod\ms{R}\mc{A}\to\Mod\mc{A}_{i}.
\end{align*}
Since a functor $\mc{A}\to\mc{A}_{i}$ defined by $X \mapsto (X,i)$ is an equivalence, we denote an object $(X,i)$ of $\mc{A}_{i}$ by $X$ for simplicity.

Since we have a full dense functor $\ms{R}\mc{A} \to \mc{A}_{i}$ given by $(X,j)\mapsto X$ if $j=i$ and $(X,j)\mapsto0$ if else,
we have a fully faithful functor from $\Mod\mc{A}_{i}$ to $\Mod\ms{R}\mc{A}$.
Therefore we identify $\Mod\mc{A}_{i}$ with the full subcategory of $\Mod\ms{R}\mc{A}$ consisting of $\ms{R}\mc{A}$-modules $M$ such that $M(X,j)=0$ for any $j\neq i$ and any $X\in\mc{A}$.
\begin{lemma}\label{embedding-composition}
Let $\mc{A}$ be an additive category and $i,j\in\mb{Z}$.
\begin{itemize}
\item[{\rm (a)}]
We have $\rho_{j}|_{\Mod\mc{A}_{i}}={\rm id}_{\Mod\mc{A}_{i}}$ if $j=i$ and $\rho_{j}|_{\Mod\mc{A}_{i}}=0$ if else.

\item[{\rm (b)}]
For any $X\in\mc{A}$, we have an exact sequence
\begin{align}\label{DA-RA-A}
0 \to \kD\mc{A}_{i-1}(X,-)\xto{\beta}\ms{R}\mc{A}(-,(X,i))\xto{\alpha}\mc{A}_{i}(-,X)\to 0
\end{align}
in $\Mod\ms{R}\mc{A}$.
In particular, we have $\rho_{j}(P)\in\add\{ \mc{A}_{j}(-,X), \kD\mc{A}_{j}(X,-) \mid X\in\mc{A} \}$ for any $P\in\proj\ms{R}\mc{A}$ and $j\in\mb{Z}$.

\item[{\rm (c)}]
Each finitely generated $\mc{A}_{i}$-module is a finitely generated $\ms{R}\mc{A}$-module.
\end{itemize}
\end{lemma}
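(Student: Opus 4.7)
The plan is to prove the three parts in order, since (b) relies on the explicit form of the embedding $\Mod\mc{A}_i \hookrightarrow \Mod\ms{R}\mc{A}$ established in (a), and (c) is a short consequence of (b). All three statements should reduce to direct computations from the explicit Hom-space description in Definition \ref{defition-repetitive}.

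For (a), I unfold the identification: under the embedding $\Mod\mc{A}_i \hookrightarrow \Mod\ms{R}\mc{A}$, a module $N \in \Mod\mc{A}_i$ becomes the $\ms{R}\mc{A}$-module that equals $N$ on $\mc{A}_i$ and vanishes on $\mc{A}_j$ for every $j \neq i$. Since $\rho_j$ is by definition restriction to $\mc{A}_j$, this gives $\rho_i|_{\Mod\mc{A}_i} = \mathrm{id}$ and $\rho_j|_{\Mod\mc{A}_i} = 0$ for $j\neq i$.

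For (b), I construct $\alpha$ and $\beta$ pointwise. For $(Y,k) \in \ms{R}\mc{A}$, the definition of $\ms{R}\mc{A}$ gives $\ms{R}\mc{A}((Y,k),(X,i)) = \mc{A}(Y,X)$ if $k=i$, $= \kD\mc{A}(X,Y)$ if $k=i-1$, and $=0$ otherwise, while the two outer terms of the sequence are supported exactly at $k=i$ and $k=i-1$ respectively and equal those same groups there (by (a)). Taking $\alpha$ and $\beta$ to be the identity in their supporting degree and zero elsewhere produces a sequence that is exact pointwise, and naturality in $(Y,k)$ reduces to the composition rules in Definition \ref{defition-repetitive}. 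For the ``in particular'' clause, any $P \in \proj\ms{R}\mc{A}$ is a summand of a finite direct sum of representables $\ms{R}\mc{A}(-,(X_s,i_s))$; applying the exact restriction $\rho_j$ to (\ref{DA-RA-A}) and using (a) identifies $\rho_j(\ms{R}\mc{A}(-,(X_s,i_s)))$ as $\mc{A}_j(-,X_s)$ when $i_s=j$, as $\kD\mc{A}_j(X_s,-)$ when $i_s=j+1$, and as $0$ otherwise; passing to direct sums and direct summands yields the claim.

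For (c), starting from a finitely generated $\mc{A}_i$-module $M$ with an epimorphism $\mc{A}_i(-,X) \twoheadrightarrow M$ in $\Mod\mc{A}_i$, I view this as an epimorphism in $\Mod\ms{R}\mc{A}$ through the embedding from (a) and precompose with the surjection $\alpha$ of (\ref{DA-RA-A}) to obtain $\ms{R}\mc{A}(-,(X,i)) \twoheadrightarrow M$, exhibiting $M$ as a finitely generated $\ms{R}\mc{A}$-module. No step presents a genuine obstacle; the most delicate bookkeeping is in (b), where one must carefully match the two shift cases $i_s = j$ and $i_s = j+1$ and keep the direction of the degree shift correct when reading off the summands of $\rho_j(P)$.
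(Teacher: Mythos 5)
Your proposal is correct and follows essentially the same route as the paper: $\alpha$ and $\beta$ are defined degreewise as the identity in their supporting degree and zero elsewhere, exactness is checked pointwise, and the ``in particular'' clause and part (c) are deduced by applying the exact functor $\rho_j$ (respectively, composing with the epimorphism $\alpha$) just as the paper does. The only difference is that you spell out slightly more of the bookkeeping that the paper leaves implicit, which changes nothing in substance.
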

\begin{proof}
(a)
The assertions follow from the definition of $\rho_{j}$.

(b)
We construct morphisms $\alpha, \beta$ in $\Mod\ms{R}\mc{A}$.
For an object $(Y,j)$ of $\ms{R}\mc{A}$,
define
\begin{align*}
	\alpha_{(Y,j)}:=\begin{cases}
					{\rm id}_{\mc{A}(Y,X)} & j=i, \\
					0 & \textnormal{else},
				\end{cases}
	\quad
	\beta_{(Y,j)}:=\begin{cases}
					{\rm id}_{\kD\mc{A}(X,Y)} & j+1=i, \\
					0 & \textnormal{else},
				\end{cases}
	\end{align*}
and extend $\alpha$ and $\beta$ on $\ms{R}\mc{A}$ additively.
We can show that $\alpha$ and $\beta$ are actually morphisms in $\Mod\ms{R}\mc{A}$.
By definitions of $\alpha$ and $\beta$, for an object $(Y,j)$ of $\ms{R}\mc{A}$, we have the following exact sequence
$$0\to\kD\mc{A}_{i-1}(X,(Y,j)) \xto{\beta_{(Y,j)}} \ms{R}\mc{A}((Y,j),(X,i))\xto{\alpha_{(Y,j)}}\mc{A}_{i}((Y,j),X)\to 0$$
in $\Mod k$.
Thus we have an exact sequence (\ref{DA-RA-A}).
Since $\rho_{j}$ is exact, by applying $\rho_{j}$ to the exact sequence (\ref{DA-RA-A}) and by using (a), we have the assertion.

(c)
This follows from (b).
\end{proof}
By the following lemma, we construct a filtration of a module over repetitive categories.
For $M\in\Mod\ms{R}\mc{A}$, put $\supp M:=\{\, i\in\mb{Z} \mid \rho_{i}(M)\neq 0 \,\}$.
\begin{lemma}\label{filtration-module}
Let $M\in\Mod\ms{R}\mc{A}$ and $i\in\mb{Z}$.
\begin{itemize}
\item[{\rm (a)}]
If $\rho_{i-1}(M)=0$, then there exists a short exact sequence
\begin{align*}
0\to \rho_{i}(M) \xto{\alpha} M \to N \to 0
\end{align*}
in $\Mod\ms{R}\mc{A}$ such that $\rho_{i}(N)=0$ and $\rho_{j}(N)=\rho_{j}(M)$ for any $j>i$.

\item[{\rm (b)}]
Assume that $\supp M$ is a finite set and put $m:=\max\supp M$ and $n:=\min\supp M$.
Then there exists a sequence of subobjects of $M$:
\begin{align*}
0=M_{n-1}\subset M_{n} \subset \cdots \subset M_{m-1}\subset M_{m}=M
\end{align*}
such that $M_{i}/M_{i-1}\simeq \rho_{i}(M)$ for any $i=n, n+1,\ldots,m$.
\end{itemize}
\end{lemma}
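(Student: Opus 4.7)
The plan is to construct the subobject $\rho_i(M) \hookrightarrow M$ in part (a) directly as an extension-by-zero, using the very restrictive structure of morphism spaces in $\ms{R}\mc{A}$, and then to iterate this construction to obtain the filtration in part (b).

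For part (a), I would define a subfunctor $L \subset M$ of $\Mod\ms{R}\mc{A}$ by setting $L(X, i) := M(X, i)$ for $X \in \mc{A}$ and $L(X, j) := 0$ for all $j \neq i$. The subtlety is checking functoriality: since $\ms{R}\mc{A}((Y,k), (X,j)) = 0$ unless $k \in \{j, j-1\}$, the only potentially nontrivial action of morphisms on $L$ starts from $L(X, i) = M(X, i)$ and lands in components $L(Y, k)$ with $k \in \{i-1, i\}$. For $k = i$ the component of $L$ agrees with that of $M$ and there is nothing to check, while for $k = i-1$ the codomain $M(Y, i-1)$ vanishes by the hypothesis $\rho_{i-1}(M) = 0$, so any image automatically lies in $L(Y, i-1) = 0$. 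Thus $L$ is a well-defined $\ms{R}\mc{A}$-submodule of $M$, and under the identification $\Mod\mc{A}_i \subset \Mod\ms{R}\mc{A}$ fixed before the lemma it corresponds to $\rho_i(M)$. Setting $N := M/L$, one has $\rho_i(N) = 0$ by construction of $L$, while $\rho_j(N) = \rho_j(M)$ for $j > i$ since $L$ vanishes in those degrees.

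For part (b), I would induct downward from $n$, applying (a) repeatedly. First, note that $\rho_{n-1}(M) = 0$ by minimality of $n = \min\supp M$, so part (a) produces a subobject $M_n := \rho_n(M) \hookrightarrow M$ together with a quotient $N_1 := M/M_n$ satisfying $\rho_n(N_1) = 0$ and $\rho_j(N_1) = \rho_j(M)$ for $j > n$. Applying (a) again to $N_1$ at index $n+1$ yields a short exact sequence $0 \to \rho_{n+1}(M) \to N_1 \to N_2 \to 0$, and I define $M_{n+1} \subset M$ as the preimage of $\rho_{n+1}(M) \hookrightarrow N_1$ under the quotient $M \to N_1$; the third isomorphism theorem then gives $M_{n+1}/M_n \simeq \rho_{n+1}(M)$. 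Continuing step by step, the procedure terminates after $m - n + 1$ steps and produces the required filtration.

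The single step requiring care is the verification in (a) that the candidate subfunctor $L$ is stable under the $\ms{R}\mc{A}$-action, and this is precisely where the hypothesis $\rho_{i-1}(M) = 0$ enters; once it is established, (b) is a formal bookkeeping argument and no further obstacle is expected.
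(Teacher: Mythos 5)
Your proof is correct and takes essentially the same approach as the paper: the paper constructs the inclusion $\rho_i(M)\hookrightarrow M$ as a natural transformation $\alpha$ (identity in degree $i$, zero elsewhere) whose naturality reduces to the same codomain-vanishing observation you used to check that your $L$ is a subfunctor, and the paper also obtains (b) by iterating (a). Your write-up of (b) is in fact slightly more explicit than the paper's one-line deduction.
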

\begin{proof}
(a)
We construct a monomorphism $\alpha : \rho_{i}(M) \to M$ in $\Mod\ms{R}\mc{A}$.
For an object $(X,j)$ of $\ms{R}\mc{A}$, define
\begin{align*}
	\alpha_{(X,j)}:=\begin{cases}
					{\rm id}_{M(X,j)} & j=i, \\
					0 & \textnormal{else},
				\end{cases}
\end{align*}
and extend this on $\ms{R}\mc{A}$ additively.
Since $\rho_{i-1}(M)=0$, $\alpha$ is a morphism of $\Mod\ms{R}\mc{A}$.
By the definition, $\alpha$ is mono.
Then we have an exact sequence $0 \to \rho_{i}(M) \to M \to N \to 0$ in $\Mod\ms{R}\mc{A}$, where $N:=\Cok(\alpha)$.
By Lemma \ref{embedding-composition}, we have $\rho_{j}(\rho_{i}(M))=\rho_{i}(M)$ if $j=i$ and $\rho_{j}(\rho_{i}(M))=0$ if else.
Therefore by applying the functor $\rho_{j}$ to this exact sequence, we have the assertion.

(b)
This follows from (a).
\end{proof}
By the following two lemmas, we see that the functors $\Mod\mc{A}\to\Mod\ms{R}\mc{A}$ and $\rho_{i} : \Mod\ms{R}\mc{A}\to\Mod\mc{A}$ restrict to functors between $\mod\mc{A}$ and $\mod\ms{R}\mc{A}$ under certain assumptions.
For simplicity, we use the notation $\mod_{-1}\mc{A}:=\Mod\mc{A}$, $\mod_{\infty}\mc{A}:=\mod\mc{A}$ and $\infty-1:=\infty$.
\begin{lemma}\label{modA-to-modRA}
Let $\mc{A}$ be a $k$-linear, Hom-finite additive category and $n\in\mb{Z}_{\geq 0}\cup\{\infty\}$.
Assume that $\kD\mc{A}(X,-)\in\mod_{n-1}\mc{A}$ holds for any $X\in\mc{A}$.
Then an inclusion functor $\Mod\mc{A}_{i}\to\Mod\ms{R}\mc{A}$ restricts to a functor $\mod_{n}\mc{A}_{i}\to\mod_{n}\ms{R}\mc{A}$ for any $i\in\mb{Z}$.
\end{lemma}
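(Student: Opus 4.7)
My plan is to prove the lemma by induction on $n \in \mb{Z}_{\geq 0}$, and then deduce the case $n = \infty$ at the end from the characterization $\mod \mc{A} = \bigcap_m \mod_m \mc{A}$ of Lemma \ref{cap-modnA-modinfA}(b). The base case $n = 0$ has vacuous hypothesis ($\mod_{-1}\mc{A} = \Mod\mc{A}$), and the conclusion is precisely Lemma \ref{embedding-composition}(c).

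For the inductive step, I will first handle the representables $\mc{A}_i(-, X)$ and then bootstrap to arbitrary $M \in \mod_n \mc{A}_i$. For the representables, the short exact sequence
\begin{align*}
0 \to \kD\mc{A}_{i-1}(X, -) \to \ms{R}\mc{A}(-, (X, i)) \to \mc{A}_i(-, X) \to 0
\end{align*}
of Lemma \ref{embedding-composition}(b) has projective middle term, so by Lemma \ref{M-in-mod-n-A}(c) it suffices to show that $\kD\mc{A}_{i-1}(X, -) \in \mod_{n-1}\ms{R}\mc{A}$. Under the equivalence $\mc{A} \simeq \mc{A}_{i-1}$, the standing hypothesis gives $\kD\mc{A}_{i-1}(X, -) \in \mod_{n-1} \mc{A}_{i-1}$. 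The hypothesis for $n$ implies the hypothesis for $n - 1$ (since $\mod_{n-1}\mc{A} \subset \mod_{n-2}\mc{A}$), so the inductive hypothesis applied at index $i - 1$ delivers $\kD\mc{A}_{i-1}(X, -) \in \mod_{n-1}\ms{R}\mc{A}$, as required.

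Given a general $M \in \mod_n \mc{A}_i$, I will choose a length-$n$ projective resolution $P_n \to \cdots \to P_0 \to M \to 0$ in $\Mod \mc{A}_i$. Each $P_j$ is a direct summand of a finite direct sum of representables $\mc{A}_i(-, X)$, so by Lemma \ref{cap-modnA-modinfA}(a) and the representable case just proved, each $P_j$ lies in $\mod_n \ms{R}\mc{A}$. The inclusion $\Mod\mc{A}_i \hookrightarrow \Mod\ms{R}\mc{A}$ is exact (a sequence in $\Mod\mc{A}_i$ evaluates to zero at indices $j \neq i$, and agrees on the nose at $j = i$), so the resolution remains exact in $\Mod\ms{R}\mc{A}$, and Lemma \ref{M-in-mod-n-A}(a) gives $M \in \mod_n \ms{R}\mc{A}$.

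For $n = \infty$, Lemma \ref{cap-modnA-modinfA}(b) allows me to replace the $\infty$-case by the full family of finite cases: the hypothesis $\kD\mc{A}(X, -) \in \mod\mc{A}$ supplies the hypothesis for every finite $m$, and any $M \in \mod\mc{A}_i$ lies in each $\mod_m \mc{A}_i$; applying the finite-$n$ conclusion for each $m$ and intersecting yields $M \in \mod\ms{R}\mc{A}$. The main obstacle is the simultaneous shift $(n, i) \to (n - 1, i - 1)$ in the inductive step; this is only a bookkeeping issue because the statement is quantified uniformly over $i$, but one must verify carefully that both the reduced hypothesis and the shifted index fit the inductive hypothesis.
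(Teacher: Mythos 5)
Your proof is correct and follows essentially the same strategy as the paper's: reduce to the representable case via a projective resolution, induct on $n$ using the short exact sequence of Lemma \ref{embedding-composition}(b) together with Lemma \ref{M-in-mod-n-A}(c), and note that the hypothesis at level $n$ implies that at level $n-1$. You are more explicit than the paper about the logical bookkeeping in the inductive step (the simultaneous shift $(n,i)\mapsto(n-1,i-1)$ and the weakening of the standing hypothesis) and about deducing the $n=\infty$ case via $\mod\mc{A}=\bigcap_m\mod_m\mc{A}$, where the paper merely says ``by an argument similar to the above,'' but the substance is the same.
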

\begin{proof}
Let $n\in\mb{Z}_{\geq 0}$.
It is sufficient to show that $\mc{A}_{i}(-,X)\in\mod_{n}\ms{R}\mc{A}$ for any $i\in\mb{Z}$.
In fact, any $M\in\mod_{n}\mc{A}_{i}$ has an exact sequence $P_{n} \to \cdots \to P_{0} \to M \to 0$ with $P_{i}\in\proj\mc{A}_{i}$ and hence $M$ belongs to $\mod_{n}\ms{R}\mc{A}$ by Lemma \ref{M-in-mod-n-A} (a).

We show $\proj\mc{A}_{i}\subset\mod_{n}\ms{R}\mc{A}$ for any $i\in\mb{Z}$ by an induction on $n$.
If $n=0$, then by Lemma \ref{embedding-composition} (c), we have the assertion.
Let $n>0$, $X\in\mc{A}$ and $i\in\mb{Z}$.
By Lemma \ref{embedding-composition} (b), 
there exists an exact sequence
\begin{align*}
0 \to \kD\mc{A}_{i-1}(X,-) \to \ms{R}\mc{A}(-,(X,i)) \to \mc{A}_{i}(-,X) \to 0.
\end{align*}
By the inductive hypothesis, $\kD\mc{A}_{i-1}(X,-)\in\mod_{n-1}\ms{R}\mc{A}$ holds.
Therefore we have $\mc{A}_{i}(-,X)\in\mod_{n}\ms{R}\mc{A}$ by Lemma \ref{M-in-mod-n-A} (c).

By an argument similar to the above, the assertion holds when $n=\infty$.
\end{proof}
\begin{lemma}\label{modRA-to-modA}
Let $\mc{A}$ be a $k$-linear, Hom-finite additive category, $n\in\mb{Z}_{\geq 0}\cup\{\infty\}$.
Assume that $\kD\mc{A}(X,-)\in\mod_{n}\mc{A}$ holds for any $X\in\mc{A}$.
Then the functor $\rho_{i} : \Mod\ms{R}\mc{A}\to\Mod\mc{A}_{i}$ restricts to a functor $\mod_{n}\ms{R}\mc{A}\to\mod_{n}\mc{A}_{i}$ for any $i\in\mb{Z}$.
\end{lemma}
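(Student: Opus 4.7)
The plan is to push a projective resolution of $M$ through the exact functor $\rho_i$ and to control the resulting terms using Lemma~\ref{embedding-composition}~(b), which pins down exactly what the restriction of a finitely generated projective $\ms{R}\mc{A}$-module looks like at index $i$.

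First I would establish the key intermediate claim: $\rho_i(\proj\ms{R}\mc{A})\subseteq\mod_n\mc{A}_i$. Let $P\in\proj\ms{R}\mc{A}$. By Lemma~\ref{embedding-composition}~(b), $\rho_i(P)$ is a direct summand of a finite direct sum of objects of the form $\mc{A}_i(-,X)$ or $\kD\mc{A}_i(X,-)$ with $X\in\mc{A}$. Representables lie in $\proj\mc{A}_i\subseteq\mod_n\mc{A}_i$, and the second type lies in $\mod_n\mc{A}_i$ by the hypothesis transported across the equivalence $\mc{A}\simeq\mc{A}_i$, $X\mapsto(X,i)$. Since $\mod_n\mc{A}_i$ is closed under direct summands and extensions by Lemma~\ref{cap-modnA-modinfA}~(a), the claim follows.

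For the main argument, assume first that $n\in\mb{Z}_{\geq0}$ and pick $M\in\mod_n\ms{R}\mc{A}$. Choose a resolution
\[
P_n\to P_{n-1}\to\cdots\to P_0\to M\to 0
\]
with $P_j\in\proj\ms{R}\mc{A}$. Since $\rho_i$ is exact, we obtain an exact sequence
\[
\rho_i(P_n)\to\rho_i(P_{n-1})\to\cdots\to\rho_i(P_0)\to\rho_i(M)\to 0
\]
in $\Mod\mc{A}_i$. By the first step, each $\rho_i(P_j)$ belongs to $\mod_n\mc{A}_i$, and since any object of $\mod_n\mc{A}_i$ is in particular in $\mod_{n-j}\mc{A}_i$ for $0\leq j\leq n$ (truncate the resolution), Lemma~\ref{M-in-mod-n-A}~(a) then gives $\rho_i(M)\in\mod_n\mc{A}_i$. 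For $n=\infty$, I would run the same argument using an infinite projective resolution of $M$ over $\ms{R}\mc{A}$, observe that each $\rho_i(P_j)$ still lies in $\mod\mc{A}_i$ by the first step, and conclude $\rho_i(M)\in\mod\mc{A}_i$ via Lemma~\ref{M-in-mod-n-A}~(b).

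No step here is really an obstacle: the substantive content is entirely packaged inside Lemma~\ref{embedding-composition}~(b), which converts the pointwise hypothesis on $\kD\mc{A}(X,-)$ into a global control of $\rho_i$ on projectives, after which the conclusion is a formal consequence of exactness of $\rho_i$ and the basic stability properties of the subcategories $\mod_n\mc{A}_i$.
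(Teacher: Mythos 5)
Your proof is correct and takes essentially the same route as the paper: apply the exact functor $\rho_i$ to a length-$n$ projective resolution of $M$, identify $\rho_i(P_j)$ via Lemma~\ref{embedding-composition}~(b), and conclude with Lemma~\ref{M-in-mod-n-A}~(a). You merely spell out more explicitly (via Lemma~\ref{cap-modnA-modinfA}~(a)) the closure-under-summands step that the paper leaves implicit.
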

\begin{proof}
Let $n\in\mb{Z}_{\geq 0}$ and $M\in\mod_{n}\ms{R}\mc{A}$.
We have an exact sequence $P_{n}\to \cdots \to P_{1}\to P_{0}\to M\to0$ in $\Mod\ms{R}\mc{A}$, where $P_{j}\in\proj\ms{R}\mc{A}$ for each $j\geq 0$.
Since $\rho_{i}$ is exact, we have an exact sequence $\rho_{i}(P_{n}) \to \cdots \to\rho_{i}(P_{1})\to \rho_{i}(P_{0})\to \rho_{i}(M)\to0$ in $\Mod\mc{A}_{i}$.
By the assumption and Lemma \ref{embedding-composition} (b), $\rho_{i}(P_{j})\in\mod_{n}\mc{A}_{i}$ holds for any $j\geq 0$.
Therefore $\rho_{i}(M)\in\mod_{n}\mc{A}_{i}$ holds by Lemma \ref{M-in-mod-n-A} (a).

By an argument similar to the above, the assertion holds when $n=\infty$.
\end{proof}
Note that in general $\mod\ms{R}\mc{A}=\mod_{1}\ms{R}\mc{A}$ does not hold for a $k$-linear additive category $\mc{A}$.
This is the case where $\mc{A}$ is a dualizing $k$-variety by Theorem \ref{repetitive-dualizing} below.
Note that there exists an equivalence $(\ms{R}\mc{A})^{\rm op}\simeq \ms{R}(\mc{A}^{\rm op})$ given by $(X,i) \mapsto (X,-i)$.
\begin{theorem}\label{repetitive-dualizing}
Let $\mc{A}$ be a dualizing $k$-variety.
Then the following statements hold.
\begin{itemize}
\item[{\rm (a)}]
$\ms{R}\mc{A}$ and $(\ms{R}\mc{A})^{\rm op}$ have weak kernels.
\item[{\rm (b)}]
$\ms{R}\mc{A}$ is a dualizing $k$-variety.
\end{itemize}
\end{theorem}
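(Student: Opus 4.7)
The plan is to reduce the statement to the existence of weak kernels on both sides via Proposition~\ref{dualizing-Serre-Frobenius}. Since $\ms{R}\mc{A}$ is Hom-finite and idempotent complete by Lemma~\ref{properties-of-repetitive}(a),(c) and carries a Serre functor by Lemma~\ref{properties-of-repetitive}(b), the implication (ii)$\Rightarrow$(i) of Proposition~\ref{dualizing-Serre-Frobenius} will deduce (b) from (a). Moreover, $\mc{A}^{\rm op}$ is again a dualizing $k$-variety (Example~\ref{example-dualizing}(a)) and there is a natural equivalence $(\ms{R}\mc{A})^{\rm op}\simeq\ms{R}(\mc{A}^{\rm op})$ given by $(X,i)\mapsto(X,-i)$, so it is enough to show that $\ms{R}\mc{A}$ has weak kernels for an arbitrary dualizing $k$-variety $\mc{A}$; applying the same result to $\mc{A}^{\rm op}$ will then handle the opposite side.

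By Lemma~\ref{lem-weak-mod-modinf}, showing that $\ms{R}\mc{A}$ has weak kernels is equivalent to the equality $\mod_{1}\ms{R}\mc{A}=\mod\ms{R}\mc{A}$, and for this it suffices to verify that the kernel $K$ (in $\Mod\ms{R}\mc{A}$) of any morphism $f:P_{1}\to P_{0}$ in $\proj\ms{R}\mc{A}$ lies in $\mod\ms{R}\mc{A}$. Since $P_{0}$ and $P_{1}$ are finitely generated projectives, each has finite support as an $\ms{R}\mc{A}$-module, and hence so does $K\subseteq P_{1}$. Lemma~\ref{filtration-module}(b) therefore yields a finite filtration of $K$ whose composition factors are the $\ms{R}\mc{A}$-modules $\rho_{i}(K)$ for $i\in\supp K$.

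The core step is to check that each factor $\rho_{i}(K)$ lies in $\mod\ms{R}\mc{A}$. Since $\rho_{i}$ is exact, $\rho_{i}(K)$ coincides with the kernel in $\Mod\mc{A}_{i}$ of $\rho_{i}(P_{1})\to\rho_{i}(P_{0})$. By Lemma~\ref{embedding-composition}(b), each $\rho_{i}(P_{l})$ lies in $\add\{\mc{A}_{i}(-,X),\ \kD\mc{A}_{i}(X,-)\mid X\in\mc{A}\}$; the dualizing hypothesis guarantees $\kD\mc{A}(X,-)\in\mod\mc{A}$, so in fact $\rho_{i}(P_{l})\in\mod\mc{A}_{i}$. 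Since $\mc{A}$ has weak kernels (Lemma~\ref{lem-properties-dualizing}(a)), $\mod\mc{A}_{i}$ is abelian (Lemma~\ref{lem-weak-mod-modinf}), so $\rho_{i}(K)\in\mod\mc{A}_{i}$. Finally, Lemma~\ref{modA-to-modRA} applied with $n=\infty$ provides the inclusion $\mod\mc{A}_{i}\hookrightarrow\mod\ms{R}\mc{A}$, whence each $\rho_{i}(K)\in\mod\ms{R}\mc{A}$.

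Since $\mod\ms{R}\mc{A}$ is thick in $\Mod\ms{R}\mc{A}$ by Lemma~\ref{cap-modnA-modinfA}(c) and hence closed under extensions, iterating along the filtration yields $K\in\mod\ms{R}\mc{A}$, as required. I do not expect a genuinely hard obstacle in this argument; the main technical point is the coherent bookkeeping between $\Mod\ms{R}\mc{A}$ and the degreewise categories $\Mod\mc{A}_{i}$, which is precisely what the preparatory lemmas on $\rho_{i}$, the description of projectives (Lemma~\ref{embedding-composition}(b)), the filtration (Lemma~\ref{filtration-module}), and the transfer result (Lemma~\ref{modA-to-modRA}) are designed to control.
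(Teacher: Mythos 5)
Your proof is correct and follows essentially the same route as the paper: reduce via Proposition~\ref{dualizing-Serre-Frobenius} and Lemma~\ref{properties-of-repetitive}, show weak kernels by passing through the degreewise functors $\rho_{i}$, use Lemma~\ref{embedding-composition}(b) to control $\rho_i$ of projectives, invoke the dualizing hypothesis on $\mc{A}$ to get each $\rho_i(K)\in\mod\mc{A}_i$, transfer via Lemma~\ref{modA-to-modRA}, and finish with the finite filtration of Lemma~\ref{filtration-module}(b) plus extension-closure of $\mod\ms{R}\mc{A}$. The only cosmetic difference is that the paper packages the step $\rho_i(P_l)\in\mod\mc{A}_i$ as an application of Lemma~\ref{modRA-to-modA} rather than re-deriving it from Lemma~\ref{embedding-composition}(b).
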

\begin{proof}
Note that since $\mc{A}$ is a dualizing $k$-variety, $\kD\mc{A}(-,X)\in\mod_{1}\mc{A}$ holds for any $X\in\mc{A}$ and $\mod_{1}\mc{A}=\mod\mc{A}$ holds.

(a)
Let $\ms{X}, \ms{Y}\in\ms{R}\mc{A}$ and $f : \ms{R}\mc{A}(-,\ms{X})\to\ms{R}\mc{A}(-,\ms{Y})$ be a morphism of $\mod\ms{R}\mc{A}$.
We show that $K:=\Ker(f)$ is a finitely generated $\ms{R}\mc{A}$-module.
For any $i\in\mb{Z}$, we have an exact sequence $0\to \rho_{i}(K) \to \rho_{i}(\ms{R}\mc{A}(-,\ms{X}))\to\rho_{i}(\ms{R}\mc{A}(-,\ms{Y}))$ in $\Mod\mc{A}_{i}$.
By Lemma \ref{modRA-to-modA},  we have $\rho_{i}(\ms{R}\mc{A}(-,\ms{X})), \rho_{i}(\ms{R}\mc{A}(-,\ms{Y}))\in\mod\mc{A}_{i}$.
Therefore $\rho_{i}(K)\in\mod\mc{A}_{i}$ for any $i\in\mb{Z}$, since $\mc{A}_{i}\simeq\mc{A}$ is a dualizing $k$-variety.
By Lemma \ref{modA-to-modRA}, $\rho_{i}(K)\in\mod\ms{R}\mc{A}$ for any $i\in\mb{Z}$.
Since $K$ is a submodule of $\ms{R}\mc{A}(-,\ms{X})$, $\supp K$ is a finite set.
Thus by Lemma \ref{filtration-module} (b), $K$ has a finite filtration by finitely presented $\ms{R}\mc{A}$-modules $\{\rho_{i}(K)\mid i\in\mb{Z}\}$ and we have $K\in\mod\ms{R}\mc{A}$.
In particular, $K$ is finitely generated and $\ms{R}\mc{A}$ has weak kernels.
Since $(\ms{R}\mc{A})^{\rm op}\simeq \ms{R}(\mc{A}^{\rm op})$ holds and $\mc{A}^{\rm op}$ is a dualizing $k$-variety, $(\ms{R}\mc{A})^{\rm op}$ has weak kernels.

(b)
By the definition of dualizing $k$-varieties, $\mc{A}$ is $\Hom$-finite and idempotent complete.
By Lemma \ref{properties-of-repetitive}, $\ms{R}\mc{A}$ is $\Hom$-finite and idempotent complete with a Serre functor.
Therefore by Proposition \ref{dualizing-Serre-Frobenius}, $\ms{R}\mc{A}$ is a dualizing $k$-variety.
\end{proof}
\subsection{Tilting subcategories}\label{subsection-happel-theorem-dualizing}
The aim of this subsection is to show Theorem \ref{thm-tilting-subcategory}.
Before stating the main theorem, we need the following definition.

Let $\mc{A}$ be a $k$-linear, Hom-finite additive category.
We denote by \[\rho : \Mod\ms{R}\mc{A}\to\Mod\mc{A}\] the forgetful functor, that is, $\rho(M):=\bigoplus_{i\in\mb{Z}}\rho_{i}(M)$ for any $M\in\Mod\ms{R}\mc{A}$, where we regard an $\mc{A}_{i}$-module $\rho_{i}(M)$ as an $\mc{A}$-module by the equivalence $\Mod\mc{A}_{i}\simeq\Mod\mc{A}$.
Note that $\rho$ is an exact functor.
We denote by $\ms{GP}(\ms{R}\mc{A},\mc{A})$ the full subcategory of $\ms{GP}(\ms{R}\mc{A})$ consisting of all objects $M$ such that the projective dimension of $\rho(M)$ over $\mc{A}$ is finite, that is,
\[\ms{GP}(\ms{R}\mc{A},\mc{A}):=\{\,M\in\ms{GP}(\ms{R}\mc{A}) \mid \pd_{\mc{A}}\rho (M)<\infty\,\}.\]
We consider the following condition on $\mc{A}$:
\begin{center}
(G) : the projective dimension of $\kD\mc{A}(X,-)$ over $\mc{A}$ is finite for any $X\in\mc{A}$.
\end{center}
\begin{proposition}\label{prop-A-G}
Let $\mc{A}$ be a $k$-linear, Hom-finite additive category.
Then $\mc{A}$ satisfies (G) if and only if $\proj\ms{R}\mc{A}\subset\ms{GP}(\ms{R}\mc{A},\mc{A})$ holds.
In this case, the following statements fold.
\begin{itemize}
\item[{\rm (a)}]
$\ms{GP}(\ms{R}\mc{A},\mc{A})$ is a Frobenius category such that the projective objects is the objects of $\proj\ms{R}\mc{A}$.

\item[{\rm (b)}]
The inclusion functor $\ms{GP}(\ms{R}\mc{A},\mc{A})\to\ms{GP}(\ms{R}\mc{A})$ induces a fully faithful triangle functor $\U{\ms{GP}}(\ms{R}\mc{A},\mc{A})\to\U{\ms{GP}}(\ms{R}\mc{A})$.
\end{itemize}
\end{proposition}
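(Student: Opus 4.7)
The plan is to first establish the stated equivalence, and then derive (a) and (b) as essentially formal consequences.

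For the equivalence itself, I would use Lemma~\ref{embedding-composition}~(b). For any $X\in\mc{A}$ and $i\in\mb{Z}$, the indecomposable projective $\ms{R}\mc{A}$-module $P_{X,i}:=\ms{R}\mc{A}(-,(X,i))$ sits in an exact sequence $0\to\kD\mc{A}_{i-1}(X,-)\to P_{X,i}\to\mc{A}_{i}(-,X)\to 0$. Applying the exact functor $\rho$ and observing that $\rho_{i}$ takes the middle term, the right term, and $\kD\mc{A}_{i-1}(X,-)$ to $\mc{A}$-modules whose sum gives $\rho(P_{X,i})$, I would note that the resulting sequence in $\Mod\mc{A}$ splits because $\mc{A}(-,X)$ is projective. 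Hence $\rho(P_{X,i})\simeq\kD\mc{A}(X,-)\oplus\mc{A}(-,X)$, so $\pd_{\mc{A}}\rho(P_{X,i})<\infty$ if and only if $\pd_{\mc{A}}\kD\mc{A}(X,-)<\infty$. Since every object of $\proj\ms{R}\mc{A}$ is a summand of sums of the $P_{X,i}$'s, the inclusion $\proj\ms{R}\mc{A}\subset\ms{GP}(\ms{R}\mc{A},\mc{A})$ is equivalent to condition (G).

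For (a), assuming (G), I would verify that $\ms{GP}(\ms{R}\mc{A},\mc{A})$ is an extension-closed subcategory of the Frobenius category $\ms{GP}(\ms{R}\mc{A})$ (see Proposition~\ref{GP-Frobenius}): given a conflation $0\to L\to M\to N\to 0$ in $\ms{GP}(\ms{R}\mc{A})$ with $L,N\in\ms{GP}(\ms{R}\mc{A},\mc{A})$, exactness of $\rho$ combined with the standard projective-dimension inequality forces $\pd_{\mc{A}}\rho(M)<\infty$, and the same argument handles the two-out-of-three cases for kernels of epimorphisms and cokernels of monomorphisms into or out of objects of $\proj\ms{R}\mc{A}$. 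Since $\proj\ms{R}\mc{A}\subset\ms{GP}(\ms{R}\mc{A},\mc{A})$ by (G), enough projectives and enough injectives in $\ms{GP}(\ms{R}\mc{A})$ restrict to enough projectives and injectives here. Finally, any relative-projective (or relative-injective) object of $\ms{GP}(\ms{R}\mc{A},\mc{A})$ is a direct summand of some $P\in\proj\ms{R}\mc{A}$ by the usual splitting argument, so the relative-projective and relative-injective objects both coincide with $\proj\ms{R}\mc{A}$, proving $\ms{GP}(\ms{R}\mc{A},\mc{A})$ is Frobenius with the stated projectives.

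For (b), I would observe that the inclusion $\ms{GP}(\ms{R}\mc{A},\mc{A})\hookrightarrow\ms{GP}(\ms{R}\mc{A})$ is an exact functor of Frobenius categories that sends relative-projectives to relative-projectives (both classes equal $\proj\ms{R}\mc{A}$). Consequently it descends to a triangle functor $\U{\ms{GP}}(\ms{R}\mc{A},\mc{A})\to\U{\ms{GP}}(\ms{R}\mc{A})$: the suspension in either stable category is defined by choosing a conflation into a projective object, and the triangle structures match since the conflations and the ambient projectives coincide. Fully faithfulness is then immediate, since for $M,N\in\ms{GP}(\ms{R}\mc{A},\mc{A})$ the morphism space $\Hom(M,N)$ and the subspace of morphisms factoring through an object of $\proj\ms{R}\mc{A}$ are computed in the common ambient category $\Mod\ms{R}\mc{A}$, hence are identical in the two Frobenius categories.

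The only step requiring real care is the extension-closedness and the stability of $\ms{GP}(\ms{R}\mc{A},\mc{A})$ under kernels of conflations from projectives, since one must ensure condition (G) is used to control $\pd_{\mc{A}}\rho(P)$; everything else is a bookkeeping exercise in the standard $\U{\mc{F}}$-construction, once the equivalence in the first paragraph is settled.
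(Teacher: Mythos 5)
Your proposal is correct and follows essentially the same route as the paper: Lemma~\ref{embedding-composition}~(b) gives the identification $\rho(\ms{R}\mc{A}(-,(X,i)))\simeq\kD\mc{A}(X,-)\oplus\mc{A}(-,X)$ for the equivalence with (G), and (a) is proved by the same splitting argument for relative projectives (the paper is slightly terser, leaving the dual check of relative injectives and the descent to (b) implicit, both of which you spell out).
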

\begin{proof}
The first assertion follows from Lemma \ref{embedding-composition} (b).
Assume that $\mc{A}$ satisfies (G).

(a) By the definition and since $\rho$ is exact, $\ms{GP}(\ms{R}\mc{A},\mc{A})$ is extension closed subcategory of $\Mod\ms{R}\ms{A}$ and has enough projectives and enough injectives.
Clearly, an object of $\proj\ms{R}\mc{A}$ is relative projective of $\ms{GP}(\ms{R}\mc{A},\mc{A})$.
Let $Q$ be a relative projective object of $\ms{GP}(\ms{R}\mc{A},\mc{A})$.
There exists an exact sequence $0\to M \to P \to Q \to 0$ in $\ms{GP}(\ms{R}\mc{A})$ with $P\in\proj\ms{R}\mc{A}$.
We have $M\in\ms{GP}(\ms{R}\mc{A},\mc{A})$ and therefore this sequence splits.
Consequently, the relative projective objects of $\ms{GP}(\ms{R}\mc{A},\mc{A})$ is the objects of $\proj\ms{R}\mc{A}$.

(b) This follows from (a).
\end{proof}
We regard $\U{\ms{GP}}(\ms{R}\mc{A},\mc{A})$ as a thick subcategory of $\U{\ms{GP}}(\ms{R}\mc{A})$ by Proposition \ref{prop-A-G} (b) if $\mc{A}$ satisfies (G).
Let $\mc{A}$ be a $k$-linear, Hom-finite additive category.
We consider the following condition on $\mc{A}$:
\begin{center}
(IFP) : $\kD\mc{A}(X,-)\in\mod\mc{A}$ holds for any $X\in\mc{A}$.
\end{center}
Note that if $\mc{A}$ is a dualizing $k$-variety, then $\mc{A}$ satisfies (IFP).
We denote by $\mc{M}$ the full subcategory of $\Mod\ms{R}\mc{A}$ given by
\begin{align*}
\mc{M}:=\add\{\, \mc{A}_{0}(-,X) \mid X\in\mc{A}\,\}.
\end{align*}

We recall the definition of tilting subcategories of a triangulated category.
\begin{definition}\label{definition-tilting-subcategory}
Let $\mc{T}$ be a triangulated category.
A full subcategory $\mc{M}$ of $\mc{T}$ is called a \emph{tilting subcategory} of $\mc{T}$ if $\mc{T}(\mc{M},\mc{M}[i])=0$ for any $i\neq 0$ and $\thick\mc{M}=\mc{T}$.
\end{definition}
We establish the following result.
\begin{theorem}\label{thm-tilting-subcategory}
Let $\mc{A}$ be a $k$-linear, Hom-finite additive category and assume that  $\mc{A}$ and $\mc{A}^{\rm op}$ satisfy (IFP).
Then the following holds.
\begin{itemize}
\item[{\rm (a)}]
If $\mc{A}$ and $\mc{A}^{\rm op}$ satisfy (G),
then $\mc{M}\subset\ms{GP}(\ms{R}\mc{A},\mc{A})$ holds and $\mc{M}$ gives a tilting subcategory of $\U{\ms{GP}}(\ms{R}\mc{A},\mc{A})$.

\item[{\rm (b)}]
If each object of $\mod\mc{A}$ and $\mod\mc{A}^{\rm op}$ has finite projective dimension, then
$\mc{M}\subset\ms{GP}(\ms{R}\mc{A})$ holds and $\mc{M}$ gives a tilting subcategory of $\U{\ms{GP}}(\ms{R}\mc{A})$.
\end{itemize}
\end{theorem}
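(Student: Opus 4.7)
The proof of Theorem \ref{thm-tilting-subcategory} splits into three parts: (I) the containment $\mc{M}\subset\ms{GP}(\ms{R}\mc{A})$ (refined to $\ms{GP}(\ms{R}\mc{A},\mc{A})$ in case (a)); (II) the orthogonality $\Hom_{\U{\ms{GP}}}(\mc{A}_0(-,X),\mc{A}_0(-,Y)[i])=0$ for $i\neq 0$; and (III) the generation $\thick\mc{M}=\U{\ms{GP}}(\ms{R}\mc{A})$ (or $\U{\ms{GP}}(\ms{R}\mc{A},\mc{A})$). The core tools are the canonical sequences in Lemma \ref{embedding-composition}(b), the Serre functor on $\ms{R}\mc{A}$ from Lemma \ref{properties-of-repetitive}(b), and the resulting criterion Lemma \ref{lem-Serre-Adual}(b) for Gorenstein-projectivity.

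For Part (I), Lemma \ref{lem-Serre-Adual}(b) reduces $\mc{A}_0(-,X)\in\ms{GP}(\ms{R}\mc{A})$ to checking $\mc{A}_0(-,X)\in\mod\ms{R}\mc{A}$ and $(\mc{A}_0(-,X))^{\ast}\in\mod(\ms{R}\mc{A})^{\rm op}$. I would build a projective resolution in $\Mod\ms{R}\mc{A}$ iteratively: begin with $0\to\kD\mc{A}_{-1}(X,-)\to\ms{R}\mc{A}(-,(X,0))\to\mc{A}_0(-,X)\to 0$ from Lemma \ref{embedding-composition}(b); by (IFP) on $\mc{A}$, pick a projective cover $\mc{A}(-,Y)\twoheadrightarrow\kD\mc{A}(X,-)$ in $\mod\mc{A}$ and lift it via Lemma \ref{embedding-composition}(b) (with $i=-1$) to a projective cover $\ms{R}\mc{A}(-,(Y,-1))\twoheadrightarrow\kD\mc{A}_{-1}(X,-)$. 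The new syzygy is again of the ``single-degree $\kD\mc{A}$ plus a lower-degree $\kD\mc{A}$'' form, so the recursion continues and yields a resolution in $\Mod\ms{R}\mc{A}$. The dual condition follows by the symmetric argument under (IFP) on $\mc{A}^{\rm op}$. In case (a), condition (G) forces the $\mod\mc{A}$-resolutions to be finite, and a bookkeeping argument confirms that $\rho(\mc{A}_0(-,X))$ has finite $\mc{A}$-projective dimension.

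For Part (II), the case $i>0$ reduces to computing $\Ext^{i}_{\Mod\ms{R}\mc{A}}(\mc{A}_0(-,X),\mc{A}_0(-,Y))$ from the projective resolution of (I). Every projective summand at position $\geq 1$ has the form $\ms{R}\mc{A}(-,(Z,j))$ with $j\leq-1$, and by Yoneda $\Hom_{\Mod\ms{R}\mc{A}}(\ms{R}\mc{A}(-,(Z,j)),\mc{A}_0(-,Y))=\mc{A}_0((Z,j),Y)=0$ when $j\neq 0$, giving the vanishing. For $i<0$, by $(-)^{\ast}$-duality a projective coresolution of $\mc{A}_0(-,X)$ in $\ms{GP}(\ms{R}\mc{A})$ arises from the symmetric projective resolution of $(\mc{A}_0(-,X))^{\ast}$ in $\mod(\ms{R}\mc{A})^{\rm op}$; its terms are of the form $\ms{R}\mc{A}(-,(W,l))$ with $l\geq 1$, and the dual Hom computation yields the vanishing.

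For Part (III), in $\U{\ms{GP}}$ the canonical sequence gives the isomorphism $\mc{A}_j(-,X)\simeq\kD\mc{A}_{j-1}(X,-)[1]$, while the symmetric injective-hull sequence (dualizing the projective cover of $(\mc{A}_j(-,X))^{\ast}\in\mod(\ms{R}\mc{A})^{\rm op}$ obtained from (IFP) on $\mc{A}^{\rm op}$) realizes $\mc{A}_j(-,X)[1]$ as an extension of some $\mc{A}_{j+1}(-,Y)$ by a finite-pd auxiliary $\mc{A}_j$-module. Combining these with the finite projective resolution of $\kD\mc{A}(X,-)$ in $\mod\mc{A}$ placed at each degree $j$, I would prove by induction on $|j|$ that $\mc{A}_j(-,X),\kD\mc{A}_j(X,-)\in\thick\mc{M}$ for every $j\in\mb{Z}$ and $X\in\mc{A}$. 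Given any $M$ in the relevant Gorenstein-projective category, finite generation of $M$ forces finite support, and the filtration of Lemma \ref{filtration-module}(b) reduces the problem to showing each layer $\rho_i(M)$ lies in $\thick\mc{M}$, which follows from its finite projective resolution by $\mc{A}_i(-,X_k)$'s. The main obstacle will be Part (III): both ensuring that all intermediate modules in the filtration and in the successive resolutions remain in $\ms{GP}$ (so the exact sequences translate to triangles in $\U{\ms{GP}}$), and handling the negative-degree induction via the $(-)^{\ast}$-symmetry between the canonical projective-cover and injective-hull sequences, will require careful Frobenius-category bookkeeping.
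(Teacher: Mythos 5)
Your proposal is correct and follows essentially the same three-part strategy as the paper: containment via Lemma \ref{lem-Serre-Adual}(b), orthogonality via the structure of the minimal projective resolution of $\mc{A}_0(-,X)$, and generation via the filtration of Lemma \ref{filtration-module}(b) combined with a two-sided induction on the degree $i$. There are minor organizational differences worth noting. For Part (I), the paper packages your iterative resolution argument into Lemma \ref{modA-to-modRA} and then invokes it together with the formula $(\mc{A}_i(-,X))^{\ast}\simeq\kD\mc{A}_{-i-1}(-,X)$ inside Lemma \ref{lem-rep-fun-GP}(a); this is cleaner than rebuilding the recursion by hand, though your version works. (Also note that $\mc{M}\subset\ms{GP}(\ms{R}\mc{A},\mc{A})$ is essentially free once $\mc{M}\subset\ms{GP}(\ms{R}\mc{A})$ is known, since $\rho(\mc{A}_0(-,X))=\mc{A}(-,X)$ is already projective over $\mc{A}$ — condition (G) is really needed to ensure $\proj\ms{R}\mc{A}\subset\ms{GP}(\ms{R}\mc{A},\mc{A})$, not for $\mc{M}$ itself.) For Part (II), you propose using the $(-)^{\ast}$-dual projective coresolution to handle $i<0$; the paper handles both signs simultaneously by observing that $\mc{A}_0(-,X)[\pm i]$ is represented in the stable category by the syzygy $K^i$, which satisfies $\rho_{\geq 0}(K^i)=0$, so Lemma \ref{projective-cover}(b) kills both $\mc{T}(K^i,\mc{A}_0(-,Y))$ and $\mc{T}(\mc{A}_0(-,Y),K^i)$ at once — slightly more economical. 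Part (III) matches the paper closely; the bookkeeping worry you flag (that the terms of the filtration and the syzygies remain Gorenstein-projective, so that short exact sequences pass to triangles in the stable category) is exactly what Lemmas \ref{lem-GP-dual-thick}(b), \ref{lem-rep-fun-GP}(b), and \ref{projective-cover}(c) are deployed for, so the concern is legitimate but resolvable by the existing machinery.
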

In the case where $\mc{A}$ is a dualizing $k$-variety, we have the following corollary.
\begin{corollary}\label{cor-tilting-subcategory-dualizing}
Let $\mc{A}$ be a dualizing $k$-variety.
If each object of $\mod\mc{A}$ and $\mod\mc{A}^{\rm op}$ has finite projective dimension, then $\mc{M}$ is a tilting subcategory of $\smod\ms{R}\mc{A}$.
\end{corollary}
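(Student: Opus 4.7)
The plan is to reduce Corollary \ref{cor-tilting-subcategory-dualizing} to Theorem \ref{thm-tilting-subcategory}(b) by identifying the stable category $\smod\ms{R}\mc{A}$ with $\U{\ms{GP}}(\ms{R}\mc{A})$ as triangulated categories, and by observing that all hypotheses of Theorem \ref{thm-tilting-subcategory}(b) are automatically in force for a dualizing $k$-variety $\mc{A}$. So this is essentially a packaging argument; no new computation is needed.

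First, I would verify the hypotheses. Since $\mc{A}$ is a dualizing $k$-variety, so is $\mc{A}^{\rm op}$ by Example \ref{example-dualizing}(a). In particular $\kD\mc{A}(X,-)\in\mod_{1}\mc{A}=\mod\mc{A}$ for every $X\in\mc{A}$ (using Lemma \ref{lem-weak-mod-modinf}, since dualizing $k$-varieties have weak kernels by Lemma \ref{lem-properties-dualizing}(a)), and similarly for $\mc{A}^{\rm op}$; thus both $\mc{A}$ and $\mc{A}^{\rm op}$ satisfy (IFP). The finite projective dimension condition is our assumption, so the hypotheses of Theorem \ref{thm-tilting-subcategory}(b) are met.

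Next, I would identify $\smod\ms{R}\mc{A}$ with $\U{\ms{GP}}(\ms{R}\mc{A})$. By Theorem \ref{repetitive-dualizing}, $\ms{R}\mc{A}$ is a dualizing $k$-variety and both $\ms{R}\mc{A}$ and $(\ms{R}\mc{A})^{\rm op}$ have weak kernels; by Lemma \ref{properties-of-repetitive}(b), $\ms{R}\mc{A}$ admits a Serre functor. Applying the implication (i)$\Rightarrow$(iii) in Proposition \ref{dualizing-Serre-Frobenius} to $\ms{R}\mc{A}$ yields $\ms{GP}(\ms{R}\mc{A})=\mod_{1}\ms{R}\mc{A}$, and Lemma \ref{lem-weak-mod-modinf} gives $\mod_{1}\ms{R}\mc{A}=\mod\ms{R}\mc{A}$. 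Hence $\mod\ms{R}\mc{A}=\ms{GP}(\ms{R}\mc{A})$ is a Frobenius abelian category whose projective-injective objects are exactly the objects of $\proj\ms{R}\mc{A}$ (by Proposition \ref{GP-Frobenius}), and consequently the triangulated categories $\smod\ms{R}\mc{A}$ and $\U{\ms{GP}}(\ms{R}\mc{A})$ coincide.

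Finally, Theorem \ref{thm-tilting-subcategory}(b) applied to $\mc{A}$ provides $\mc{M}\subset\ms{GP}(\ms{R}\mc{A})$ together with the tilting property of $\mc{M}$ in $\U{\ms{GP}}(\ms{R}\mc{A})$; under the identification just established, this is precisely the statement that $\mc{M}$ is a tilting subcategory of $\smod\ms{R}\mc{A}$. There is no genuine obstacle beyond checking that Proposition \ref{dualizing-Serre-Frobenius} can be applied to $\ms{R}\mc{A}$, which in turn rests on Theorem \ref{repetitive-dualizing} and Lemma \ref{properties-of-repetitive}.
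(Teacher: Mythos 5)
Your proposal is correct and takes essentially the same route as the paper's proof: establish $\ms{GP}(\ms{R}\mc{A})=\mod\ms{R}\mc{A}$ (so that $\U{\ms{GP}}(\ms{R}\mc{A})=\smod\ms{R}\mc{A}$) and then invoke Theorem \ref{thm-tilting-subcategory}(b). The paper states this identity in one line, whereas you supply the supporting chain through Theorem \ref{repetitive-dualizing}, Lemma \ref{properties-of-repetitive}(b), Proposition \ref{dualizing-Serre-Frobenius}, and Lemma \ref{lem-weak-mod-modinf}, which is the same argument unpacked.
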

Before starting the proof of Theorem \ref{thm-tilting-subcategory}, we prepare two lemmas.
Let $\mc{A}$ be a $k$-linear additive category and $i\in\mb{Z}$.
Put the following full subcategories of $\ms{R}\mc{A}$:
\begin{align*}
\mc{A}_{<i}:=\bigvee_{j<i}\mc{A}_{j}, \quad
\mc{A}_{\geq i}:=\bigvee_{j\geq i}\mc{A}_{j}.
\end{align*}
For $M\in\Mod\ms{R}\mc{A}$ and $i\in\mb{Z}$, let $\rho_{<i}(M):=\bigoplus_{j<i}\rho_{j}(M)$ and $\rho_{\geq i}(M):=\bigoplus_{j\geq i}\rho_{j}(M)$.
\begin{lemma}\label{projective-cover}
Let $\mc{A}$ be a $k$-linear, Hom-finite additive category.
Let $M$ and $N$ be finitely generated $\ms{R}\mc{A}$-modules and $i\in\mb{Z}$.
Assume that $\rho_{\geq i}(M)=0$ and $\rho_{<i}(N)=0$.
\begin{itemize}
\item[{\rm (a)}]
There exist epimorphisms 
\begin{align*}
\ms{R}\mc{A}(-,\ms{X})\to M, \quad \ms{R}\mc{A}(-,\ms{Y})\to N, 
\end{align*}
for some $\ms{X}\in\mc{A}_{<i}$ and $\ms{Y}\in\mc{A}_{\geq i}$.

\item[{\rm (b)}]
We have $(\Mod\ms{R}\mc{A})(M,N)=0$ and $(\Mod\ms{R}\mc{A})(N,M)=0$.

\item[{\rm (c)}]
Assume $M\in\mod\ms{R}\mc{A}$.
Let
\begin{align}\label{projective-resolution}
\cdots\to P_{2}\xto{f_{2}} P_{1}\xto{f_{1}} P_{0}\xto{f_{0}} M\to0
\end{align}
 be a minimal projective resolution of $M$ in $\mod\ms{R}\mc{A}$.
Then we have $\rho_{\geq i}(\Ker f_{l})=0$ for $l\geq 0$.
Moreover by applying a functor $\rho_{i-1}$, we have a minimal projective resolution of $\rho_{i-1}(M)$ in $\mod\mc{A}_{i-1}$.
\end{itemize}
\end{lemma}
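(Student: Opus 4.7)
The plan is to use Yoneda's lemma to control supports of representable $\ms{R}\mc{A}$-modules, then to propagate through the projective resolution using exactness of the restriction functors $\rho_{j}$. For (a), since $M$ is finitely generated, fix an epimorphism $\ms{R}\mc{A}(-,\ms{Z})\to M$ and decompose $\ms{Z}=\ms{X}\oplus\ms{X}'$ with $\ms{X}\in\mc{A}_{<i}$ and $\ms{X}'\in\mc{A}_{\geq i}$. For each indecomposable summand $(W,j)$ of $\ms{X}'$, Yoneda identifies the corresponding component $\ms{R}\mc{A}(-,(W,j))\to M$ with an element of $M(W,j)=\rho_{j}(M)(W)=0$ (since $j\geq i$), so this component vanishes and $\ms{R}\mc{A}(-,\ms{X})\to M$ remains epi. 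The argument for $N$ is dual, dropping the $\mc{A}_{<i}$-summand using $\rho_{<i}(N)=0$. For (b), any morphism $f:M\to N$ decomposes into components $f_{(X,j)}:M(X,j)\to N(X,j)$, each of which vanishes since either the source ($j\geq i$) or the target ($j<i$) is zero; the same kills maps $N\to M$.

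For the first assertion of (c), I proceed by induction on $l$. Lemma \ref{embedding-composition} (b) shows that any representable $\ms{R}\mc{A}(-,(X,j))$ with $j<i$ has support $\{j-1,j\}\subseteq\mb{Z}_{<i}$. By the inductive hypothesis $\rho_{\geq i}(\Ker f_{l-1})=0$, so (a) produces a representable cover on $\mc{A}_{<i}$ of $\Ker f_{l-1}$; since the minimal projective cover $P_{l}$ is a summand of any projective cover, $P_{l}$ is itself of the form $\ms{R}\mc{A}(-,\ms{X}_{l})$ with $\ms{X}_{l}\in\mc{A}_{<i}$, giving $\rho_{\geq i}(P_{l})=0$. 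Exactness of $\rho_{k}$ applied to $0\to\Ker f_{l}\to P_{l}\to\Im f_{l}\to 0$ then yields $\rho_{\geq i}(\Ker f_{l})=0$.

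The main obstacle is the final claim that $\rho_{i-1}$ preserves minimality. Exactness is immediate, and the formula $\rho_{i-1}(\ms{R}\mc{A}(-,(X,j)))=\mc{A}_{i-1}(-,X)$ for $j=i-1$ and zero for $j<i-1$ (from Lemma \ref{embedding-composition} (b)) makes $\rho_{i-1}(P_{\bullet})\to\rho_{i-1}(M)$ into a projective resolution. For minimality I would establish that $\rho_{i-1}(\rad L)=\rad\rho_{i-1}(L)$ for any $L$ with $\rho_{\geq i}(L)=0$: computing $(\rad L)(Y,i-1)$ as the sum of images of $L(f)$ over radical morphisms $f\colon(Y,i-1)\to(Z,k)$ in $\ms{R}\mc{A}$, the case $k=i$ contributes trivially because $L(-,i)=0$, leaving only the $k=i-1$ contributions from $\rad\mc{A}(Y,Z)$, which assemble into $\rad\rho_{i-1}(L)(Y)$. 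This yields $\rho_{i-1}(\top\Omega^{l}M)\simeq\top\rho_{i-1}(\Omega^{l}M)$ at each syzygy $\Omega^{l}M$, transferring the projective cover property from $P_{l}\to\Omega^{l}M$ to $\rho_{i-1}(P_{l})\to\rho_{i-1}(\Omega^{l}M)$, and hence minimality of the whole resolution.
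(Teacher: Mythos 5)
Your proof is correct and follows the same overall strategy as the paper's for part (a) and the support claim in (c). Two variations worth noting. For (b) you argue pointwise: every component $f_{(X,j)}$ of a natural transformation $M\to N$ or $N\to M$ vanishes because either the source ($j\geq i$) or the target ($j<i$) is the zero group. This is cleaner than the paper's route, which first invokes (a) to get a representable cover $\ms{R}\mc{A}(-,\ms{X})\twoheadrightarrow M$ with $\ms{X}\in\mc{A}_{<i}$ and then applies $\Hom$; your argument also does not use finite generation, so it is strictly more general, though that generality is not needed here. For the minimality claim in (c) the paper is terse (``Minimality comes from the minimality of the resolution''); you supply the actual content by showing $\rho_{i-1}(\rad L)=\rad\rho_{i-1}(L)$ for $L$ with $\rho_{\geq i}(L)=0$. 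The verification is sound: morphisms out of $(Y,i-1)$ in $\ms{R}\mc{A}$ land only in layers $i-1$ and $i$, the layer-$i$ contributions die because $L$ vanishes there, and $\rad_{\ms{R}\mc{A}}$ restricted to $\mc{A}_{i-1}$ agrees with $\rad_{\mc{A}_{i-1}}$ since $\mc{A}_{i-1}$ is a full subcategory closed under summands. From there $\rho_{i-1}(\top\,\Omega^{l}M)\simeq\top\,\rho_{i-1}(\Omega^{l}M)$ transfers the projective-cover property layerwise as you say. One small imprecision: $P_{l}$ need not itself be representable (if $\mc{A}$ is not idempotent complete), only a direct summand of some $\ms{R}\mc{A}(-,\ms{X}_{l})$ with $\ms{X}_{l}\in\mc{A}_{<i}$; that still forces $\rho_{\geq i}(P_{l})=0$, which is all the argument needs, so the conclusion stands.
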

\begin{proof}
(a)
Since $M$ and $N$ are finitely generated,  there exist epimorphisms $\ms{R}\mc{A}(-,\ms{X})\to M$ and $\ms{R}\mc{A}(-,\ms{Y})\to N$, where $\ms{X}$ and $\ms{Y}$ are in $\ms{R}\mc{A}$.
Let $\ms{W}$ be an object of $\mc{A}_{\geq i}$.
By Yoneda's lemma and the assumption, we have $(\Mod\ms{R}\mc{A})(\ms{R}\mc{A}(-,\ms{W}),M)\simeq M(\ms{W})=0$.
Therefore we can replace $\ms{X}$ with an object of $\mc{A}_{<i}$.
Similarly, we can replace $\ms{Y}$ with an object of $\mc{A}_{\geq i}$.

(b)
By (a), there exists an epimorphism $\ms{R}\mc{A}(-,\ms{X})\to M$, where $\ms{X}\in\mc{A}_{<i}$.
We have a monomorphism $(\Mod\ms{R}\mc{A})(M,N) \to (\Mod\ms{R}\mc{A})(\ms{R}\mc{A}(-,\ms{X}),N)$.
Since $(\Mod\ms{R}\mc{A})(\ms{R}\mc{A}(-,\ms{X}),$ $N)\simeq N(\ms{X})=0$, $(\Mod\ms{R}\mc{A})(M,N)=0$ holds.
Similarly, by applying $(\Mod\ms{R}\mc{A})(-,M)$ to an epimorphism $\ms{R}\mc{A}(-,\ms{Y})\to N$, we have $(\Mod\ms{R}\mc{A})(N,M)=0$.

(c)
By (a), there exists $\ms{X}_{0}\in\mc{A}_{<i}$ such that $P_{0}$ is a direct summands of $\ms{R}\mc{A}(-,\ms{X}_{0})$.
We have $\rho_{\geq i}(\ms{R}\mc{A}(-,\ms{X}_{0}))=0$.
Therefore the submodule $\Ker f_{0}$ of $\ms{R}\mc{A}(-,\ms{X}_{0})$ satisfies $\rho_{\geq i}(\Ker f_{0})=0$.
By using this argument inductively, we have that there exist $\ms{X}_{l}\in\mc{A}_{<i}$ such that $P_{l}$ is a direct summands of $\ms{R}\mc{A}(-,\ms{X}_{l})$ for any $l\geq 0$.
Therefore we have $\rho_{\geq i}(\Ker f_{l})=0$ for $l\geq 0$.

For any $l\geq 0$, by Lemma \ref{embedding-composition}, $\rho_{i-1}(P_{l})$ is a direct sum of $\mc{A}_{i-1}(-,X)$ for some $X\in\mc{A}$ and zero objects.
Therefore each $\rho_{i-1}(P_{l})$ is a projective $\mc{A}_{i-1}$-module.
Minimality comes from the minimality of the resolution (\ref{projective-resolution}).
\end{proof}
We see when $\ms{GP}(\ms{R}\mc{A})$ contains the representable functors on $\mc{A}$.
Note that there exists an equivalence $(\ms{R}\mc{A})^{\rm op}\simeq \ms{R}(\mc{A}^{\rm op})$ given by $(X,i) \mapsto (X,-i)$.
Thus we have a duality \[\Mod_{\rm fg}\ms{R}\mc{A} \xto{\kD} \Mod_{\rm fg}(\ms{R}\mc{A})^{\rm op}\xto{\sim} \Mod_{\rm fg}\ms{R}(\mc{A}^{\rm op}).\]
By this duality, a full subcategory $\mod\mc{A}_{i}$ of $\mod\ms{R}\mc{A}$ goes to a full subcategory $\mod(\mc{A}^{\rm op})_{-i}$ of  $\mod\ms{R}(\mc{A}^{\rm op})$.
\begin{lemma}\label{lem-rep-fun-GP}
Let $\mc{A}$ be a $k$-linear, Hom-finite additive category.
\begin{itemize}
\item[{\rm (a)}]
The following statements are equivalent.
\begin{itemize}
\item[{\rm (i)}]
$\mc{A}$ and $\mc{A}^{\rm op}$ satisfy (IFP).
\item[{\rm (ii)}]
$\mc{A}_{i}(-,X)\in\ms{GP}(\ms{R}\mc{A})$ and $\mc{A}_{i}(X,-)\in\ms{GP}(\ms{R}\mc{A})^{\rm op}$ hold  for any $X\in\mc{A}$ and $i\in\mb{Z}$.
\item[{\rm (iii)}]
$\kD\mc{A}_{i}(X,-)\in\ms{GP}(\ms{R}\mc{A})$ and $\kD\mc{A}_{i}(-,X)\in\ms{GP}(\ms{R}\mc{A})^{\rm op}$ hold  for any $X\in\mc{A}$ and $i\in\mb{Z}$.
\end{itemize}
\item[{\rm (b)}]
If $\mc{A}$ and $\mc{A}^{\rm op}$ satisfy (IFP), then
$\rho_{i}(M)\in\ms{GP}(\ms{R}\mc{A})$ holds for any $M\in\ms{GP}(\ms{R}\mc{A})$ and $i\in\mb{Z}$.
\end{itemize}
\end{lemma}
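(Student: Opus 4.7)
The approach is to establish the cyclic implications (i) $\Rightarrow$ (iii) $\Rightarrow$ (ii) $\Rightarrow$ (i) for part (a), then treat (b) separately. The main tools are the short exact sequence (\ref{DA-RA-A}) from Lemma \ref{embedding-composition} (b), the characterization of $\ms{GP}(\ms{R}\mc{A})$ via Serre duality in Lemma \ref{lem-Serre-Adual} (b), and the fact that both $\rho_{i}$ and the inclusion $\Mod \mc{A}_{i} \hookrightarrow \Mod \ms{R}\mc{A}$ preserve the $\mod$ subcategory under (IFP) (Lemmas \ref{modA-to-modRA} and \ref{modRA-to-modA}). Throughout I use that $\ms{R}\mc{A}$ carries the Serre functor $\mb{S}(X, i) = (X, i+1)$ from Lemma \ref{properties-of-repetitive} (b) and the identification $(\ms{R}\mc{A})^{\rm op} \simeq \ms{R}(\mc{A}^{\rm op})$ recalled immediately before the statement.

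For (i) $\Rightarrow$ (iii), to show $\kD \mc{A}_{i}(X, -) \in \ms{GP}(\ms{R}\mc{A})$ I verify both conditions of Lemma \ref{lem-Serre-Adual} (b). Membership of $\kD \mc{A}_{i}(X, -)$ in $\mod \ms{R}\mc{A}$ is immediate from (IFP) for $\mc{A}$ combined with Lemma \ref{modA-to-modRA}. For the dual condition, I use Lemma \ref{lem-Serre-Adual} (a) and $\mb{S}^{-1}(Y, j) = (Y, j - 1)$ to compute $(\kD \mc{A}_{i}(X, -))^{\ast} \simeq \mc{A}_{i+1}(X, -)$ in $\Mod (\ms{R}\mc{A})^{\rm op}$; the right-hand side is a representable $(\mc{A}_{i+1})^{\rm op}$-module, so it lies in $\mod (\ms{R}\mc{A})^{\rm op}$ by Lemma \ref{modA-to-modRA} applied to $\mc{A}^{\rm op}$ (via (IFP) for $\mc{A}^{\rm op}$). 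The membership $\kD \mc{A}_{i}(-, X) \in \ms{GP}(\ms{R}\mc{A})^{\rm op}$ follows symmetrically by exchanging the roles of $\mc{A}$ and $\mc{A}^{\rm op}$.

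The two remaining implications in (a) are short. For (iii) $\Rightarrow$ (ii), apply the extension closure of $\ms{GP}(\ms{R}\mc{A})$ from Lemma \ref{lem-GP-dual-thick} (b) to the sequence (\ref{DA-RA-A}), using that $\ms{R}\mc{A}(-, (X, i))$ is projective. For (ii) $\Rightarrow$ (i), kernel-of-epimorphism closure applied to (\ref{DA-RA-A}) gives $\kD \mc{A}_{i-1}(X, -) \in \ms{GP}(\ms{R}\mc{A}) \subset \mod \ms{R}\mc{A}$; since this $\ms{R}\mc{A}$-module is supported only at index $i - 1$, Lemma \ref{projective-cover} (c) implies that applying $\rho_{i-1}$ to a minimal projective resolution yields a projective resolution of $\kD \mc{A}(X, -)$ in $\mod \mc{A}_{i-1} \simeq \mod \mc{A}$, so (IFP) holds for $\mc{A}$. (IFP) for $\mc{A}^{\rm op}$ follows by the same argument on the opposite side.

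Part (b) is handled again via Lemma \ref{lem-Serre-Adual} (b). For $M \in \ms{GP}(\ms{R}\mc{A}) \subset \mod \ms{R}\mc{A}$, Lemmas \ref{modRA-to-modA} and \ref{modA-to-modRA} under (IFP) yield $\rho_{i}(M) \in \mod \ms{R}\mc{A}$. For the dual condition, a Serre-functor calculation analogous to that of the second paragraph produces $(\rho_{i}(M))^{\ast} \simeq \rho_{i+1}(M^{\ast})$ as $(\ms{R}\mc{A})^{\rm op}$-modules supported at index $i + 1$. Since $M^{\ast} \in \ms{GP}(\ms{R}\mc{A})^{\rm op} \subset \mod (\ms{R}\mc{A})^{\rm op}$ by Lemma \ref{lem-GP-dual-thick} (a), the $(\mc{A}^{\rm op})$-analogues of Lemmas \ref{modRA-to-modA} and \ref{modA-to-modRA} (using (IFP) for $\mc{A}^{\rm op}$) give $\rho_{i+1}(M^{\ast}) \in \mod (\ms{R}\mc{A})^{\rm op}$, completing the argument. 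The main technical obstacle will be bookkeeping the Serre shifts when computing $(-)^{\ast}$ on modules supported at a single index, and deploying the $\mc{A}^{\rm op}$-side arguments cleanly through the equivalence $(\ms{R}\mc{A})^{\rm op} \simeq \ms{R}(\mc{A}^{\rm op})$ so that both (IFP) hypotheses are fully used.
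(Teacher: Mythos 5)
Your cyclic decomposition (i) $\Rightarrow$ (iii) $\Rightarrow$ (ii) $\Rightarrow$ (i) is a reasonable variant of the paper's route, and the (i) $\Rightarrow$ (iii), (ii) $\Rightarrow$ (i), and part (b) arguments are sound (the paper instead shows (ii) $\Leftrightarrow$ (iii) in one stroke via the computation $(\mc{A}_{i}(-,X))^{\ast}\simeq\kD\mc{A}_{-i-1}(-,X)$ and the duality $(-)^{\ast}$ on Gorenstein-projectives, then does (i) $\Leftrightarrow$ (ii) directly). However, your step (iii) $\Rightarrow$ (ii) has a genuine gap. In the sequence
\begin{align*}
0 \to \kD\mc{A}_{i-1}(X,-)\to\ms{R}\mc{A}(-,(X,i))\to\mc{A}_{i}(-,X)\to 0,
\end{align*}
the object you need to place in $\ms{GP}(\ms{R}\mc{A})$ is the \emph{cokernel}, and Lemma \ref{lem-GP-dual-thick}(b) only gives closure under extensions, direct summands, and \emph{kernels} of epimorphisms. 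Cokernels of monomorphisms out of a Gorenstein-projective into a projective are not automatically Gorenstein-projective, so ``extension closure'' is simply not the right tool here; as written the step fails.

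The fix is short: apply the exact functor $(-)^{\ast}$ to the sequence to get $0\to(\mc{A}_{i}(-,X))^{\ast}\to(\ms{R}\mc{A}(-,(X,i)))^{\ast}\to(\kD\mc{A}_{i-1}(X,-))^{\ast}\to 0$; the middle term is projective and the right-hand term lies in $\ms{GP}(\ms{R}\mc{A})^{\rm op}$ by Lemma \ref{lem-GP-dual-thick}(a), so kernel-of-epimorphism closure gives $(\mc{A}_{i}(-,X))^{\ast}\in\ms{GP}(\ms{R}\mc{A})^{\rm op}$, and the duality $(-)^{\ast}$ on Gorenstein-projectives then yields $\mc{A}_{i}(-,X)\in\ms{GP}(\ms{R}\mc{A})$. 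Alternatively, you could adopt the paper's cleaner route and verify the criterion of Lemma \ref{lem-Serre-Adual}(b) directly for $\mc{A}_{i}(-,X)$ using the explicit formula for its $(-)^{\ast}$, avoiding any reliance on closure properties at this step. One further caution for part (b): the identity $(\rho_{i}(M))^{\ast}\simeq\rho_{i+1}(M^{\ast})$ hides a sign flip coming from the identification $(\ms{R}\mc{A})^{\rm op}\simeq\ms{R}(\mc{A}^{\rm op})$, $(X,i)\mapsto(X,-i)$; you flagged the bookkeeping concern yourself, and the idea is correct, but the written index should be checked against whichever convention you fix for $\rho$ on $\ms{R}(\mc{A}^{\rm op})$-modules.
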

\begin{proof}
Note that by Lemma \ref{properties-of-repetitive}, $\ms{R}\mc{A}$ has a Serre functor $\mb{S}$.
Thus by Lemma \ref{lem-Serre-Adual}, we have an isomorphism of functors $(-)^{\ast}\simeq \kD(-\circ\mb{S}^{-1}) : \Mod_{\rm fg}\ms{R}\mc{A} \to \Mod_{\rm fg}\ms{R}(\mc{A}^{\rm op})$.
We have 
\begin{align}\label{RA-dual-of-A}
(\mc{A}_{i}(-,X))^{\ast}\simeq\kD(\mc{A}^{\rm op})_{-i-1}(X,-)=\kD\mc{A}_{-i-1}(-,X)
\end{align}
for any $X\in\mc{A}$ and $i\in\mb{Z}$.
Therefore (ii) and (iii) of (a) are equivalent.

(a)
We show that (i) implies (ii).
Let $X\in\mc{A}$.
By Lemma \ref{modA-to-modRA}, $\mc{A}_{i}(-,X)\in\mod\ms{R}\mc{A}$ holds.
We have $(\mc{A}_{i}(-,X))^{\ast}\in\mod(\ms{R}\mc{A})^{\rm op}$, by the equality (\ref{RA-dual-of-A}) and Lemma \ref{modA-to-modRA}.
Therefore by Lemma \ref{lem-Serre-Adual} (b), we have $\mc{A}_{i}(-,X)\in\ms{GP}(\ms{R}\mc{A})$.
Dually, we have $\mc{A}_{i}(X,-)\in\ms{GP}(\ms{R}\mc{A})^{\rm op}$.

We show that (ii) implies (i).
Let $X\in\mc{A}$.
Take a minimal projective resolution of $\mc{A}_{i}(-,X)$ in $\mod\ms{R}\mc{A}$:
\begin{align*}
\cdots \to Q_{2} \to Q_{1} \xto{d_{1}} \ms{R}\mc{A}(-,(X,i)) \to \mc{A}_{i}(-,X) \to 0.
\end{align*}
By Lemma \ref{embedding-composition} (b), we have $\Im d_{1}=\kD\mc{A}_{i-1}(X,-)$.
By Lemma \ref{projective-cover} (c), applying $\rho_{i-1}$, we have $\kD\mc{A}_{i-1}(X,-)\in\mod\mc{A}_{i-1}$.
This means $\kD\mc{A}(X,-)\in\mod\mc{A}$.
Dually, we have $\kD\mc{A}(-,X)\in\mod\mc{A}^{\rm op}$.

(b)
By Lemma \ref{embedding-composition} (b), we have $\rho_{i}(P)\in\add\{ \mc{A}_{i}(-,X), \kD\mc{A}_{i}(X,-) \mid X\in\mc{A} \}$ for any $P\in\proj\ms{R}\mc{A}$.
Therefore $(\rho_{i}(P))^{\ast}\in\mod(\mc{A}^{\rm op})_{-i-1}$ holds by the equality (\ref{RA-dual-of-A}) and the assumption.
Let $M\in\ms{GP}(\ms{R}\mc{A})$ and $P_{\bullet}=(P_{j}, d_{j}: P_{j}\to P_{j+1})$ be a totally acyclic complex such that $\Im d_{0}=M$, where $P_{j}\in\proj\ms{R}\mc{A}$.
By applying $\rho_{i}$, we have an exact sequence $\rho_{i}(P_{\bullet})=(\rho_{i}(P_{j}), \rho_{i}(d_{j}):  \rho_{i}(P_{j})\to  \rho_{i}(P_{j+1}))$ such that $\Im \rho_{i}(d_{0})=\rho_{i}(M)$.
We have an exact sequence $\cdots \to \rho_{i}(P_{-1})\to \rho_{i}(P_{0}) \to \rho_{i}(M) \to 0$.
By Lemmas \ref{M-in-mod-n-A} (b) and \ref{modA-to-modRA}, $\rho_{i}(M)\in\mod\ms{R}\mc{A}$ holds.
By applying a functor $(-)^{\ast}$ to $0\to \rho_{i}(M) \to \rho_{i}(P_{1}) \to \rho_{i}(P_{2}) \to \cdots$, and using Lemma \ref{M-in-mod-n-A} (b) to the resulting exact sequence, we have $(\rho_{i}(M))^{\ast}\in\mod(\ms{R}\mc{A})^{\rm op}$.
Therefore we have $\rho_{i}(M)\in\ms{GP}(\ms{R}\mc{A})$ by Lemma \ref{lem-Serre-Adual} (b).
\end{proof}
By Lemma \ref{lem-rep-fun-GP}, if $\mc{A}$ and $\mc{A}^{\rm op}$ satisfy (IFP), then $\mc{M}\subset\ms{GP}(\ms{R}\mc{A})$ holds.
We also denote by $\mc{M}$ the subcategory of $\U{\ms{GP}}(\ms{R}\mc{A})$ consisting of objects $\mc{A}_{0}(-,X)$ for any $X\in\mc{A}$.
Then we show Theorem \ref{thm-tilting-subcategory}.
We divide the proof into two propositions.
Put $\mc{T}:=\U{\ms{GP}}(\ms{R}\mc{A})$.
\begin{proposition}\label{condition-orthogonal}
Let $\mc{A}$ be a $k$-linear, Hom-finite additive category and assume that $\mc{A}$ and $\mc{A}^{\rm op}$ satisfy (IFP).
Then we have $\mc{T}(\mc{M},\mc{M}[i])=0$ for any $i\neq 0$.
\end{proposition}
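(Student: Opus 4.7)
The plan is to prove the vanishing by explicit projective-resolution computations in $\Mod\ms{R}\mc{A}$, exploiting the tight level-support of the modules in $\mc{M}$. First, Lemma \ref{lem-rep-fun-GP}(a) together with the (IFP) hypothesis on $\mc{A}$ and $\mc{A}^{\rm op}$ gives $\mc{M}\subset\ms{GP}(\ms{R}\mc{A})$, so the stable Hom spaces make sense. The whole argument then hinges on the short exact sequence
$$0\to\kD\mc{A}_{-1}(X,-)\to\ms{R}\mc{A}(-,(X,0))\to\mc{A}_{0}(-,X)\to 0$$
from Lemma \ref{embedding-composition}(b), which identifies the first syzygy of $\mc{A}_{0}(-,X)$ in $\Mod\ms{R}\mc{A}$ with $\kD\mc{A}_{-1}(X,-)$, a module satisfying $\rho_{\geq 0}=0$. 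Iterating Lemma \ref{projective-cover}(c) from there, every higher syzygy $\Omega^{l}\mc{A}_{0}(-,X)$ for $l\geq 1$ satisfies $\rho_{\geq 0}=0$, and in a minimal projective resolution the terms $P_{l}$ ($l\geq 1$) lie in $\add\{\ms{R}\mc{A}(-,\ms{Z})\mid\ms{Z}\in\mc{A}_{<0}\}$. The same holds symmetrically for $\mc{A}_{0}(-,Y)$.

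For $i\geq 1$ I would compute $\mc{T}(\mc{A}_{0}(-,X),\mc{A}_{0}(-,Y)[i])\cong\Ext^{i}_{\Mod\ms{R}\mc{A}}(\mc{A}_{0}(-,X),\mc{A}_{0}(-,Y))$ directly from the minimal projective resolution above (Ext in the Frobenius category $\ms{GP}(\ms{R}\mc{A})$ coincides with Ext in $\Mod\ms{R}\mc{A}$ for a Gorenstein-projective first argument). Since $\mc{A}_{0}(-,Y)$ is supported at level $0$, Yoneda yields $\Hom(\ms{R}\mc{A}(-,\ms{Z}),\mc{A}_{0}(-,Y))=\mc{A}_{0}(-,Y)(\ms{Z})=0$ whenever $\ms{Z}\in\mc{A}_{<0}$. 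Hence $\Hom(P_{l},\mc{A}_{0}(-,Y))=0$ for all $l\geq 1$, and the complex $\Hom(P_{\bullet},\mc{A}_{0}(-,Y))$ collapses to $\mc{A}(X,Y)$ concentrated in degree zero; this forces $\Ext^{\geq 1}$ to vanish.

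For $i\leq -1$ I would instead use the Frobenius identity $\U{\Hom}(M,N[i])=\U{\Hom}(M,\Omega^{-i}N)$ and transfer the syzygy analysis to $N=\mc{A}_{0}(-,Y)$: the same iterated use of Lemma \ref{projective-cover}(c) forces $\rho_{\geq 0}(\Omega^{-i}\mc{A}_{0}(-,Y))=0$, while $\rho_{<0}(\mc{A}_{0}(-,X))=0$ holds by support. Lemma \ref{projective-cover}(b) at level $0$ then kills the underlying Hom $\Hom_{\Mod\ms{R}\mc{A}}(\mc{A}_{0}(-,X),\Omega^{-i}\mc{A}_{0}(-,Y))=0$, and the stable Hom vanishes a fortiori.

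The main subtlety lies in the negative-degree case. The tempting shortcut of reducing $i\leq -1$ to $i\geq 1$ via the duality $(-)^{\ast}$ of Lemma \ref{lem-GP-dual-thick}(a) stalls because, by the identification (\ref{RA-dual-of-A}), the dual of $\mc{A}_{0}(-,X)$ is $\kD\mc{A}_{-1}(-,X)$, which is no longer of the form $(\mc{A}^{\rm op})_{0}(-,-)$, so the symmetry of $\mc{M}$ breaks under duality. Sidestepping the duality and running the support argument on the $N$-side via Lemma \ref{projective-cover}(b,c) directly is what lets the two cases share essentially the same core computation.
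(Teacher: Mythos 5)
Your proof is correct and takes essentially the same route as the paper's: both identify the first syzygy of $\mc{A}_{0}(-,X)$ as $\kD\mc{A}_{-1}(X,-)$ via Lemma \ref{embedding-composition}(b), iterate Lemma \ref{projective-cover}(c) to get $\rho_{\geq 0}(K^{i})=0$ for $i\geq 1$, and then conclude the vanishing from the support-disjointness Lemma \ref{projective-cover}(b). The only cosmetic difference is that you split into an $\Ext$-computation for $i\geq 1$ and a syzygy argument on the $N$-side for $i\leq -1$, whereas the paper handles both signs at once by applying Lemma \ref{projective-cover}(b) to $(\Mod\ms{R}\mc{A})(K^{i},\mc{A}_{0}(-,Y))$ and $(\Mod\ms{R}\mc{A})(\mc{A}_{0}(-,Y),K^{i})$ simultaneously.
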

\begin{proof}

Let $X\in\mc{A}$ and 
\begin{align*}
\cdots\to P_{2}\xto{f_{2}} P_{1}\xto{f_{1}} P_{0}\xto{f_{0}} \mc{A}_{0}(-,X)\to0
\end{align*}
be a minimal projective resolution in $\mod\ms{R}\mc{A}$.
Put $K^{i}:=\Ker(f^{i-1})$ for $i\geq 1$.
By Lemmas \ref{embedding-composition} (b) and \ref{projective-cover} (c), we have $\rho_{\geq 0}(K^{i})=0$ for $i\geq 1$.
Let $Y\in\mc{A}$.
Since $\rho_{<0}(\mc{A}_{0}(-,Y))=0$ and Lemma \ref{projective-cover} (b), we have
\begin{align*}
(\Mod\ms{R}\mc{A})(K^{i},\mc{A}_{0}(-,Y))=0, \quad (\Mod\ms{R}\mc{A})(\mc{A}_{0}(-,Y),K^{i})=0,
\end{align*}
for any $i\geq 1$.
Therefore we have
\begin{align*}
\mc{T}(\mc{A}_{0}(-,Y),\mc{A}_{0}(-,X)[-i])=\mc{T}(\mc{A}_{0}(-,Y),K^{i})=0,\\
\mc{T}(\mc{A}_{0}(-,X),\mc{A}_{0}(-,Y)[i])=\mc{T}(K^{i},\mc{A}_{0}(-,Y))=0,
\end{align*}
for any $i\geq 1$.
\end{proof}
\begin{proposition}\label{condition-thick}
Let $\mc{A}$ be a $k$-linear, Hom-finite additive category and assume that $\mc{A}$ and $\mc{A}^{\rm op}$ satisfy (IFP).
If $\mc{A}$ and $\mc{A}^{\rm op}$ satisfy (G),
then we have $\thick_{\mc{T}}\mc{M}=\U{\ms{GP}}(\ms{R}\mc{A},\mc{A})$.
\end{proposition}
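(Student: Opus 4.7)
The plan is to establish both inclusions of $\thick_{\mc{T}}\mc{M} = \U{\ms{GP}}(\ms{R}\mc{A},\mc{A})$. The inclusion $\thick_{\mc{T}}\mc{M} \subset \U{\ms{GP}}(\ms{R}\mc{A},\mc{A})$ is essentially automatic: Proposition \ref{prop-A-G} (b) makes the right-hand side a thick subcategory of $\mc{T}$, and $\rho(\mc{A}_0(-,X)) = \mc{A}(-,X) \in \proj\mc{A}$ shows $\mc{M}$ sits inside it. So the substantive content is the reverse inclusion.

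The crucial intermediate claim is that $\mc{A}_i(-,X) \in \thick_{\mc{T}}\mc{M}$ for every $i \in \mb{Z}$ and $X \in \mc{A}$, propagated from the trivial case $i = 0$ in both directions using (G) on $\mc{A}$ and on $\mc{A}^{\rm op}$. The middle term of the exact sequence (\ref{DA-RA-A}) is projective, so in $\mc{T}$ we have the key identification $\mc{A}_i(-,X) \simeq \kD\mc{A}_{i-1}(X,-)[1]$. Assuming (G) on $\mc{A}$, take a finite projective resolution of $\kD\mc{A}(X,-)$ over $\mc{A}$ and transport it along the exact embedding $\Mod\mc{A}_{i-1} \hookrightarrow \Mod\ms{R}\mc{A}$; by Lemma \ref{lem-rep-fun-GP} all terms of the resulting exact sequence $0 \to \mc{A}_{i-1}(-,Z_n) \to \cdots \to \mc{A}_{i-1}(-,Z_0) \to \kD\mc{A}_{i-1}(X,-) \to 0$ lie in $\ms{GP}(\ms{R}\mc{A})$, so it produces triangles in $\mc{T}$ and shows $\mc{A}_i(-,X) \in \thick\{\mc{A}_{i-1}(-,Z)\mid Z\in\mc{A}\}$. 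Dually, (G) on $\mc{A}^{\rm op}$ applied through the duality $\kD : \Mod\mc{A}^{\rm op}\to\Mod\mc{A}$ to a finite projective $\mc{A}^{\rm op}$-resolution of $\kD\mc{A}(-,X)$ yields a finite exact sequence $0 \to \mc{A}_i(-,X) \to \kD\mc{A}_i(Z_0,-) \to \cdots \to \kD\mc{A}_i(Z_m,-) \to 0$ in $\ms{GP}(\ms{R}\mc{A})$; combined with the analogous identification $\kD\mc{A}_i(Z,-) \simeq \mc{A}_{i+1}(-,Z)[-1]$ this gives $\mc{A}_i(-,X) \in \thick\{\mc{A}_{i+1}(-,Z)\mid Z\in\mc{A}\}$. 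Starting from $\mc{A}_0(-,X) \in \mc{M}$, induction in both directions establishes the claim.

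For arbitrary $M \in \ms{GP}(\ms{R}\mc{A},\mc{A})$, the module $M$ is finitely generated, hence by Lemma \ref{embedding-composition} (b) has finite support. Applying Lemma \ref{filtration-module} (b) yields a finite filtration with subquotients $\rho_i(M)$; by Lemma \ref{lem-rep-fun-GP} (b) each $\rho_i(M) \in \ms{GP}(\ms{R}\mc{A})$, so the extensions in the filtration are exact sequences in the Frobenius category $\ms{GP}(\ms{R}\mc{A})$ and induce triangles in $\mc{T}$. Moreover, $\rho_i(M)$ is a direct summand of the $\mc{A}$-module $\rho(M)$ of finite projective dimension, so it admits a finite projective $\mc{A}_i$-resolution that transports to an exact sequence $0 \to \mc{A}_i(-,Z_n) \to \cdots \to \mc{A}_i(-,Z_0) \to \rho_i(M) \to 0$ in $\ms{GP}(\ms{R}\mc{A})$. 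Together with the intermediate claim this shows $\rho_i(M) \in \thick_{\mc{T}}\mc{M}$, and the filtration then gives $M \in \thick_{\mc{T}}\mc{M}$.

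The main difficulty is not any single hard computation but the careful bookkeeping required to ensure that every exact sequence we produce actually lives inside $\ms{GP}(\ms{R}\mc{A})$ (so that it yields a triangle in the stable category $\mc{T}$), together with keeping the two-directional induction on $i$ straight by separately invoking (G) on $\mc{A}$ to move one way and (G) on $\mc{A}^{\rm op}$ to move the other.
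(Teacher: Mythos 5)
Your proof is correct and follows the same overall architecture as the paper's: both inclusions, the filtration reduction from $M$ to the pieces $\rho_i(M)$, the further reduction via a finite projective $\mc{A}_i$-resolution to the modules $\mc{A}_i(-,X)$, and then a two-sided induction on $i$ starting from $i=0$. Your forward step (transporting a finite projective resolution of $\kD\mc{A}(X,-)$ into $\Mod\mc{A}_{i-1}$ and using the shift $\mc{A}_i(-,X)\simeq\kD\mc{A}_{i-1}(X,-)[1]$ coming from the sequence (\ref{DA-RA-A})) is the same as the paper's. The backward step, however, is genuinely different and cleaner: the paper takes a minimal projective resolution of $\kD\mc{A}_{-i}(-,X)$ in $\mod\ms{R}(\mc{A}^{\rm op})$, uses minimality together with Lemma \ref{projective-cover}(c) to control the support of the $n$-th syzygy $K$, then re-filters $\kD K$ and invokes the inductive hypothesis on its pieces. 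You instead apply $\kD$ directly to a finite projective $\mc{A}^{\rm op}$-resolution of $\kD\mc{A}(-,X)$, obtaining a finite coresolution $0\to\mc{A}_i(-,X)\to\kD\mc{A}_i(Z_0,-)\to\cdots\to\kD\mc{A}_i(Z_m,-)\to 0$ inside $\ms{GP}(\ms{R}\mc{A})$, and conclude via the identification $\kD\mc{A}_i(Z,-)\simeq\mc{A}_{i+1}(-,Z)[-1]$ in $\mc{T}$. This avoids minimal resolutions over $\ms{R}(\mc{A}^{\rm op})$ and the nested filtration, and makes the two directions of the induction symmetric. One point you should make explicit when writing this up: the intermediate cokernels of your coresolution lie in $\ms{GP}(\ms{R}\mc{A})$ because applying the exact duality $(-)^*$ of Lemma \ref{lem-GP-dual-thick}(a) turns the coresolution into a resolution whose syzygies are kernels of epimorphisms between objects of $\ms{GP}(\ms{R}(\mc{A}^{\rm op}))$, and $\ms{GP}$ is closed under such kernels by Lemma \ref{lem-GP-dual-thick}(b); this is what licenses chopping the long exact sequence into conflations and hence triangles in $\mc{T}$.
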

\begin{proof}
Since $\mc{A}$ and $\mc{A}^{\rm op}$ satisfy (IFP), we have $\mc{M}\subset\U{\ms{GP}}(\ms{R}\mc{A},\mc{A})$.
Therefore we have $\thick\mc{M}:=\thick_{\mc{T}}\mc{M}\subset\U{\ms{GP}}(\ms{R}\mc{A},\mc{A})$.

Let $i\in\mb{Z}$ and $N\in\mod\mc{A}_{i}$.
Assume that $N$ has finite projective dimension over $\mc{A}_{i}$.
Since the inclusion $\mod\mc{A}_{i}\to\mod\ms{R}\mc{A}$ is exact, we have a resolution of $N$ by objects of the form $\mc{A}_{i}(-,X)$, $(X\in\mc{A})$ in $\mod\ms{R}\mc{A}$.
Therefore if $N$ is an object of $\ms{GP}(\ms{R}\mc{A},\mc{A})$, then $N$ is in $\thick\mc{M}$ if $\mc{A}_{i}(-,X)$ is in $\thick\mc{M}$ for any $X\in\mc{A}$.

Let $M\in\ms{GP}(\ms{R}\mc{A},\mc{A})$.
Since $M$ is a factor module of a finitely generated projective $\ms{R}\mc{A}$-module,
$\supp M$ is a finite set.
Thus by Lemma \ref{filtration-module} (b), $M$ has a finite filtration by $\rho_{i}(M)$ for $i=n,n+1,\ldots,m$, where $n=\min\supp M$ and $m=\max\supp M$.
By Lemma \ref{lem-rep-fun-GP} (b) and since $\rho (M)$ has finite projective dimension over $\mc{A}$, $\rho_{i}(M)\in\ms{GP}(\ms{R}\mc{A},\mc{A})$ for any $i\in\mb{Z}$.
Therefore $M$ is in $\thick\mc{M}$ if $\mc{A}_{i}(-,X)$ is in $\thick\mc{M}$ for any $X\in\mc{A}$ and $i=n,n+1,\ldots,m$.

We show that $\mc{A}_{i}(-,X)$ is in $\thick\mc{M}$ for any $X\in\mc{A}$ and $i\in\mb{Z}$ by an induction on $i$.
We first show $\mc{A}_{i}(-,X)\in\thick\mc{M}$ for $i\geq 0$.
Since $\mc{A}_{0}(-,X)\in\mc{M}$, we have $\mc{A}_{0}(-,X)\in\thick\mc{M}$.
Assume that $\mc{A}_{j}(-,X)\in\thick\mc{M}$ for $0\leq j\leq i-1$.
By Lemma \ref{embedding-composition}, we have an exact sequence in $\ms{GP}(\ms{R}\mc{A})$
\begin{align*}
0 \to \kD\mc{A}_{i-1}(X,-)\to\ms{R}\mc{A}(-,(X,i))\to\mc{A}_{i}(-,X)\to 0.
\end{align*}
Since $\kD\mc{A}_{i-1}(X,-)$ has finite projective dimension over $\mc{A}$ and by the inductive hypothesis, we have $\kD\mc{A}_{i-1}(X,-)\in\thick\mc{M}$.
Therefore $\mc{A}_{i}(-,X)$ is in $\thick\mc{M}$.

Next we show that $\mc{A}_{-i}(-,X)\in\thick\mc{M}$ for $i> 0$.
Assume that $\mc{A}_{-j}(-,X)\in\thick\mc{M}$ for $0\leq j\leq i-1$.
Let $n$ be the projective dimension of $\kD\mc{A}_{-i}(-,X)\simeq\kD(\mc{A}^{\rm op})_{i}(X,-)$ in $\mod(\mc{A}^{\rm op})_{i}$ and 
\begin{align*}
Q_{n} \xto{f} \cdots \to Q_{1}\to Q_{0} \to \kD\mc{A}_{-i}(-,X) \to 0
\end{align*}
be a minimal projective resolution in $\mod(\ms{R}\mc{A})^{\rm op}\simeq \mod\ms{R}(\mc{A}^{\rm op})$.
Put $K:=\Ker f$.
We have $K\in\ms{GP}(\ms{R}(\mc{A}^{\rm op}))$ by Lemmas \ref{lem-GP-dual-thick} (b) and \ref{lem-rep-fun-GP} (a).
By applying $\rho$ to this resolution, we have $K\in\ms{GP}(\ms{R}(\mc{A}^{\rm op}),\mc{A}^{\rm op})$.
Since the projective dimension of $\kD\mc{A}_{-i}(-,X)$ in $\mod(\mc{A}^{\rm op})_{i}$ is $n$ and by Lemma \ref{projective-cover} (c), we have $\rho_{i}(K)=0$.
Moreover by Lemma \ref{projective-cover} (c), we have $\rho_{\geq i+1}(K)=0$.
Therefore a $\ms{R}\mc{A}$-module $\kD K$ satisfies $\rho_{<-i+1}(\kD K)=0$.
Since $\kD K$ is a finitely generated $\ms{R}\mc{A}$-module, $\supp \kD K$ is finite.
Thus by Lemma \ref{filtration-module} (b), $\kD K$ has a finite filtration by $\rho_{j}(\kD K)$ for $-i+1\leq j \leq m$, where $m=\max\supp \kD K$.
By the inductive hypothesis, $\kD K\in\thick\mc{M}$ holds.
We have an exact sequence in $\ms{GP}(\ms{R}\mc{A})$
\begin{align*}
0\to \mc{A}_{-i}(-,X) \to \kD Q_{0} \to \kD Q_{1} \to \cdots \to \kD Q_{n} \to \kD K \to 0,
\end{align*}
where each $\kD Q_{l}$ is a projective $\ms{R}\mc{A}$-module.
This means $\mc{A}_{-i}(-,X)\simeq(\kD K)[-n-1]$ in $\U{\ms{GP}}(\ms{R}\mc{A},\mc{A})$.
Therefore we have $\mc{A}_{-i}(-,X)\in\thick\mc{M}$.
\end{proof}
\begin{proof}[Proof of Theorem \ref{thm-tilting-subcategory}]
 (a) This follows from Propositions \ref{condition-orthogonal} and \ref{condition-thick}.
 
 (b) Sine each object of $\mod\mc{A}$ has finite projective dimension, $\U{\ms{GP}}(\ms{R}\mc{A},\mc{A})=\U{\ms{GP}}(\ms{R}\mc{A})$ holds.
Thus the assertion follows from (a).
\end{proof}
\begin{proof}[Proof of Corollary \ref{cor-tilting-subcategory-dualizing}]
If $\mc{A}$ is a dualizing $k$-variety, then $\ms{GP}(\ms{R}\mc{A})=\mod\ms{R}\mc{A}$ holds.
The assertion directly follows from Theorem \ref{thm-tilting-subcategory}.
\end{proof}
\subsection{Happel's theorem for functor categories}\label{subsection-happel-functor}
As an application of Theorem \ref{thm-tilting-subcategory}, we show Happel's theorem for functor categories.
We need the following lemma.
\begin{lemma}\label{hom-of-tilting-subcategory}
Let $\mc{A}$ be a $k$-linear, Hom-finite additive category and assume that $\mc{A}$ and $\mc{A}^{\rm op}$ satisfy (IFP).
Let $X,Y\in\mc{A}$, $\mc{T}:=\U{\ms{GP}}(\ms{R}\mc{A})$.
We have the following equality:
\begin{align*}
	\mc{T}(\mc{A}_{0}(-,X),\mc{A}_{0}(-,Y)[n])\simeq\begin{cases}
					\mc{A}(X,Y) & n=0, \\
					0 & \textnormal{else}.
				\end{cases}
\end{align*}
\end{lemma}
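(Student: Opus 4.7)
The plan is to dispose of $n\neq 0$ immediately via Proposition \ref{condition-orthogonal}, and to focus on $n=0$. Since $\mc{A}_0(-,X),\mc{A}_0(-,Y)$ lie in $\ms{GP}(\ms{R}\mc{A})$ by Lemma \ref{lem-rep-fun-GP}, we have
\[
\mc{T}(\mc{A}_0(-,X),\mc{A}_0(-,Y)) = \Hom_{\ms{R}\mc{A}}(\mc{A}_0(-,X),\mc{A}_0(-,Y))/\mc{P},
\]
where $\mc{P}$ collects morphisms factoring through an object of $\proj\ms{R}\mc{A}$. Two tasks remain: identify the upstairs Hom with $\mc{A}(X,Y)$, and show $\mc{P}=0$.

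For the first task, I would apply $\Hom_{\ms{R}\mc{A}}(-,\mc{A}_0(-,Y))$ to the short exact sequence
\[
0 \to \kD\mc{A}_{-1}(X,-) \to \ms{R}\mc{A}(-,(X,0)) \to \mc{A}_0(-,X) \to 0
\]
from Lemma \ref{embedding-composition}(b). Yoneda identifies the middle term with $\mc{A}_0((X,0),Y)=\mc{A}(X,Y)$. The right term vanishes by Lemma \ref{projective-cover}(b) applied with $i=0$, because $\kD\mc{A}_{-1}(X,-)$ is supported only at degree $-1$ while $\mc{A}_0(-,Y)$ is supported only at degree $0$; finite generation of both sides follows from the (IFP) assumption combined with Lemma \ref{embedding-composition}(c). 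This yields $\Hom_{\ms{R}\mc{A}}(\mc{A}_0(-,X),\mc{A}_0(-,Y))\simeq\mc{A}(X,Y)$.

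For the second task, since every object of $\proj\ms{R}\mc{A}$ is a direct summand of a finite sum of representables $\ms{R}\mc{A}(-,(W,j))$, it suffices to consider a factoring $\mc{A}_0(-,X) \xto{h} \ms{R}\mc{A}(-,(W,j)) \xto{g} \mc{A}_0(-,Y)$. Yoneda gives $g \in \mc{A}_0((W,j),Y)$, which is zero unless $j=0$. For $j=0$ I would prove $\Hom_{\ms{R}\mc{A}}(\mc{A}_0(-,X),\ms{R}\mc{A}(-,(W,0)))=0$ by applying $\Hom_{\ms{R}\mc{A}}(-,\ms{R}\mc{A}(-,(W,0)))$ to the same short exact sequence: the middle term is $\mc{A}(X,W)$ by Yoneda, and the right term $\Hom_{\ms{R}\mc{A}}(\kD\mc{A}_{-1}(X,-),\ms{R}\mc{A}(-,(W,0)))$ reduces, after a routine naturality check across adjacent degrees, to $\Hom_{\Mod\mc{A}}(\kD\mc{A}(X,-),\kD\mc{A}(W,-))$, which via the duality $\kD$ and co-Yoneda is again $\mc{A}(X,W)$. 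Unwinding the composition formula of $\ms{R}\mc{A}$, the connecting map sends $a\in\mc{A}(X,W)$ to the natural transformation $\kD\mc{A}(a,-)$, which under co-Yoneda corresponds once more to $a$. Hence the connecting map is the identity and the left Hom vanishes, forcing $h=0$.

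The hard part is this last identification: matching both copies of $\mc{A}(X,W)$ and verifying that the induced connecting map is the identity requires careful bookkeeping of the bimodule pairing that defines composition in $\ms{R}\mc{A}$, and is where all the subtlety of the repetitive construction enters. Every other ingredient---the Yoneda identifications, the naturality check across levels, and the invocations of Lemmas \ref{embedding-composition}, \ref{projective-cover} and \ref{lem-rep-fun-GP}---is routine given that $\mc{A}$ and $\mc{A}^{\rm op}$ satisfy (IFP).
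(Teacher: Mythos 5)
Your proposal is correct, and the first task (identifying $\Hom_{\ms{R}\mc{A}}(\mc{A}_0(-,X),\mc{A}_0(-,Y))\simeq\mc{A}(X,Y)$ by applying $\Hom(-,\mc{A}_0(-,Y))$ to the short exact sequence of Lemma \ref{embedding-composition}(b) and killing the right-hand term) coincides with the paper's. The second task, however, diverges. You reduce to showing $\Hom_{\ms{R}\mc{A}}(\mc{A}_0(-,X),\ms{R}\mc{A}(-,(W,0)))=0$ for every $W\in\mc{A}$ by decomposing an arbitrary projective into a summand of a sum of representables and treating each degree $j$ separately; you then establish the vanishing by identifying both ends of the connecting map with $\mc{A}(X,W)$ and unwinding the composition law of $\ms{R}\mc{A}$ to see it is the identity. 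That computation does go through (the connecting map sends $a\in\mc{A}(X,W)$ to $\kD\mc{A}(a,-)$, which under $\kD$ and Yoneda for $\mc{A}^{\rm op}$-modules is again $a$), but as you acknowledge it is the delicate part and relies on tracking the bimodule pairing. The paper does two things more efficiently: it replaces the componentwise reduction by the observation that any morphism $\mc{A}_0(-,X)\to\mc{A}_0(-,Y)$ factoring through a projective must factor through the single cover $\ms{R}\mc{A}(-,(Y,0))\twoheadrightarrow\mc{A}_0(-,Y)$ (projectivity plus the epimorphism $\alpha$), so only $W=Y$ need be considered; and it computes the vanishing of $\Hom(\mc{A}_0(-,X),\ms{R}\mc{A}(-,(Y,0)))$ directly via the Serre functor on $\ms{R}\mc{A}$, namely $\kD\ms{R}\mc{A}(-,(Y,0))\simeq\ms{R}\mc{A}((Y,-1),-)$ followed by Yoneda, giving $\kD\mc{A}_0((Y,-1),X)=0$ by disjointness of support. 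This bypasses the connecting-map bookkeeping entirely. Your route is more elementary in that it does not invoke Serre duality on $\ms{R}\mc{A}$, but the paper's is shorter and less error-prone; both are valid.
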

\begin{proof}
By Proposition \ref{condition-orthogonal}, $\mc{T}(\mc{A}_{0}(-,X),\mc{A}_{0}(-,Y)[n\neq 0])=0$ holds.
Moreover we have
\begin{align}\label{A-to-RA-0}
(\Mod\ms{R}\mc{A})(\mc{A}_{0}(-,X),\ms{R}\mc{A}(-,(Y,0))) & \simeq (\Mod(\ms{R}\mc{A})^{\rm op})(\kD \ms{R}\mc{A}(-,(Y,0)),\kD\mc{A}_{0}(-,X))\nonumber\\
& \simeq (\Mod(\ms{R}\mc{A})^{\rm op})(\ms{R}\mc{A}((Y,-1),-),\kD\mc{A}_{0}(-,X))\nonumber\\
& \simeq \kD\mc{A}_{0}((Y,-1),X)=0,
\end{align}
where we use Lemma \ref{properties-of-repetitive} (b) and Yoneda's lemma.
By Lemma \ref{embedding-composition} (b), if a morphism $f : \mc{A}_{0}(-,X)\to\mc{A}_{0}(-,Y)$ in $\Mod\ms{R}\mc{A}$ factors though an object of $\proj\ms{R}\mc{A}$, then $f$ factors though $\ms{R}\mc{A}(-,(Y,0))$.
Thus by the equality (\ref{A-to-RA-0}), we have
\begin{align*}
\mc{T}(\mc{A}_{0}(-,X),\mc{A}_{0}(-,Y))=(\Mod\ms{R}\mc{A})(\mc{A}_{0}(-,X),\mc{A}_{0}(-,Y)).
\end{align*}
By applying the functor $(\Mod\ms{R}\mc{A})(-,\mc{A}_{0}(-,Y))$ to the exact sequence of Lemma \ref{embedding-composition} (b), since $(\Mod\ms{R}\mc{A})(\kD\mc{A}_{-1}(X,-),\mc{A}_{0}(-,Y))=0$ holds, we have
\begin{align*}
(\Mod\ms{R}\mc{A})(\mc{A}_{0}(-,X),\mc{A}_{0}(-,Y))&\simeq(\Mod\ms{R}\mc{A})(\ms{R}\mc{A}(-,(X,0)),\mc{A}_{0}(-,Y))\\
& \simeq \mc{A}_{0}((X,0),Y)\\
& \simeq \mc{A}(X,Y).
\end{align*}
\end{proof}
We have the following result, which is a functor category version of Happel's theorem.
\begin{corollary}\label{cor-happel-thm}
Let $\mc{A}$ be a $k$-linear, Hom-finite additive category and assume that $\mc{A}$ and $\mc{A}^{\rm op}$ satisfy (IFP).
\begin{itemize}
\item[{\rm (a)}]
If $\mc{A}$ and $\mc{A}^{\rm op}$ satisfy (G),
then we have a triangle equivalence \[\ms{K}^{\rm b}(\proj\mc{A})\simeq\U{\ms{GP}}(\ms{R}\mc{A},\mc{A}).\]

\item[{\rm (b)}]
If each object of $\mod\mc{A}$ and $\mod\mc{A}^{\rm op}$ has finite projective dimension,
then we have a triangle equivalence \[\ms{K}^{\rm b}(\proj\mc{A})\simeq\U{\ms{GP}}(\ms{R}\mc{A}).\]
\end{itemize}
\end{corollary}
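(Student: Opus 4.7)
The strategy is to apply Lemma \ref{basic-triangulated-functor} to a natural triangle functor $\Phi : \ms{K}^{\rm b}(\proj\mc{A}) \to \mc{T}$, where $\mc{T}$ denotes $\U{\ms{GP}}(\ms{R}\mc{A},\mc{A})$ in case (a) and $\U{\ms{GP}}(\ms{R}\mc{A})$ in case (b). Note first that (b) is essentially a special case of (a): under the hypothesis of (b), every module is of finite projective dimension over $\mc{A}$, so $\rho(M)$ has finite projective dimension for every $M \in \ms{GP}(\ms{R}\mc{A})$, and therefore $\ms{GP}(\ms{R}\mc{A},\mc{A}) = \ms{GP}(\ms{R}\mc{A})$. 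Thus I would focus on (a), with (b) following mutatis mutandis.

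To construct $\Phi$, I would first use Lemma \ref{hom-of-tilting-subcategory}: the assignment $\mc{A}(-,X) \mapsto \mc{A}_{0}(-,X)$ is a fully faithful additive functor from the Yoneda image of $\mc{A}$ in $\proj\mc{A}$ to $\mc{M} \subset \mc{T}$. Passing to additive idempotent completions inside $\Mod\mc{A}$ and $\mc{T}$ respectively, this extends to an additive equivalence $\proj\mc{A} \xrightarrow{\sim} \mc{M}$. Composing with the inclusion $\mc{M} \hookrightarrow \mc{T}$ and invoking the universal property of the bounded homotopy category of an additive category with respect to triangle functors into triangulated categories gives the desired triangle functor $\Phi : \ms{K}^{\rm b}(\proj\mc{A}) \to \mc{T}$.

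Next, I would verify the hypotheses of Lemma \ref{basic-triangulated-functor} with $\mc{U} = \ms{K}^{\rm b}(\proj\mc{A})$ and $\mc{X} = \proj\mc{A}$ (regarded as complexes concentrated in degree $0$). Trivially $\thick_{\mc{U}}(\proj\mc{A}) = \ms{K}^{\rm b}(\proj\mc{A})$. For $M = \mc{A}(-,X)$ and $N = \mc{A}(-,Y)$ placed in degree $0$ and any $n \in \mb{Z}$, I would check that
\begin{align*}
\Phi_{M,N[n]} : \ms{K}^{\rm b}(\proj\mc{A})(M,N[n]) \to \mc{T}(\Phi M, \Phi N[n])
\end{align*}
is an isomorphism: for $n = 0$ both sides are canonically $\mc{A}(X,Y)$ by the Yoneda lemma and Lemma \ref{hom-of-tilting-subcategory}, respectively, and the map is the identity by construction of $\Phi$; for $n \neq 0$, the source vanishes because $M$ and $N$ are concentrated in degree $0$, and the target vanishes by Proposition \ref{condition-orthogonal}. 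Hence $\Phi$ is fully faithful. Since $\proj\mc{A}$ is idempotent complete (as a direct-summand-closed subcategory of $\Mod\mc{A}$), so is $\ms{K}^{\rm b}(\proj\mc{A})$ by the Balmer--Schlichting theorem. Finally, the image $\Phi(\proj\mc{A}) = \mc{M}$ satisfies $\thick_{\mc{T}}\mc{M} = \mc{T}$ by Theorem \ref{thm-tilting-subcategory}(a) (for (a)) or (b) (for (b)). Therefore the second clause of Lemma \ref{basic-triangulated-functor} yields that $\Phi$ is a triangle equivalence.

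The main conceptual work has already been done in Theorem \ref{thm-tilting-subcategory} and Lemma \ref{hom-of-tilting-subcategory}, so this corollary is essentially a packaging step. The only point requiring a little care is the construction of $\Phi$ from the additive equivalence $\proj\mc{A} \simeq \mc{M}$, which I would either justify by appeal to a general universal property of $\ms{K}^{\rm b}$ or else spell out explicitly by sending $(P^{\bullet}, d^{\bullet})$ to the corresponding iterated mapping cone built from $\mc{A}_{0}(-,P^{\bullet})$ in $\mc{T}$ and checking functoriality and triangulatedness by the standard argument.
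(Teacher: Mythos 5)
Your overall strategy coincides with the paper's: apply Lemma \ref{basic-triangulated-functor} with $\mc{X}=\proj\mc{A}$, use Lemma \ref{hom-of-tilting-subcategory} for full faithfulness on the generators, use Theorem \ref{thm-tilting-subcategory} for $\thick(\Im F)=\mc{T}$, and deduce (b) from (a) via $\ms{GP}(\ms{R}\mc{A},\mc{A})=\ms{GP}(\ms{R}\mc{A})$. That much is correct.

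The one place where your argument has a genuine gap is the construction of the triangle functor $\Phi$ itself. There is no ``universal property of $\ms{K}^{\rm b}(\mc{P})$ with respect to triangle functors into triangulated categories'': an additive functor $\mc{P}\to\mc{T}$ into a triangulated category does not in general extend to a triangle functor $\ms{K}^{\rm b}(\mc{P})\to\mc{T}$, and your fallback of ``iterated mapping cones'' runs into the classical non-functoriality of cones in a triangulated category. What rescues the construction is that the additive equivalence $\proj\mc{A}\simeq\mc{M}$ lifts to the \emph{Frobenius category} $\mc{F}:=\ms{GP}(\ms{R}\mc{A},\mc{A})$, not merely to its stable category. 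The paper exploits this by taking the composite
\[
\ms{K}^{\rm b}(\proj\mc{A})\longrightarrow\ms{K}^{\rm -,b}(\proj\ms{R}\mc{A})\longrightarrow\ms{K}^{\rm -,b}(\proj\ms{R}\mc{A})/\ms{K}^{\rm b}(\proj\ms{R}\mc{A})\xrightarrow{\ \sim\ }\U{\mc{F}},
\]
where the first arrow is the honest totalization of a bounded complex of objects of $\proj\mc{A}_0\subset\mc{F}\subset\ms{K}^{\rm -,b}(\proj\ms{R}\mc{A})$ (a genuinely functorial operation at the level of complexes), and the last arrow is the quasi-inverse of the equivalence in Theorem \ref{rickard-theorem}. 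If you replace your appeal to a universal property by this factorization through $\ms{K}^{\rm -,b}(\proj\ms{R}\mc{A})$, the rest of your argument goes through verbatim and agrees with the paper's.
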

\begin{proof}
(a)
Let $\mc{F}:=\ms{GP}(\ms{R}\mc{A},\mc{A})$ and $\mc{P}:=\proj\ms{R}\mc{A}$.
An inclusion functor $\proj\mc{A}\simeq\proj\mc{A}_{0}\to\mc{F}$ induces a triangle functor $\ms{K}^{\rm b}(\proj\mc{A})\to\ms{K}^{\rm-, b}(\mc{P})$.
Then we have the following triangle functors
\begin{align*}
F : \ms{K}^{\rm b}(\proj\mc{A}) \to \ms{K}^{\rm-, b}(\mc{P}) \to\ms{K}^{\rm-, b}(\mc{P})/\ms{K}^{\rm b}(\mc{P}) \to \U{\mc{F}},
\end{align*}
where the third is a quasi-inverse of Theorem \ref{rickard-theorem}.
We denote by $F$ the composite of these functors.
We show that $F$ is an equivalence by using Lemma \ref{basic-triangulated-functor}.

Put $\mc{U}:=\ms{K}^{\rm b}(\proj\mc{A})$ and $\mc{T}:=\U{\ms{GP}}(\ms{R}\mc{A},\mc{A})=\U{\mc{F}}$.
Note that $\proj\mc{A}$ is a subcategory of $\mc{U}$.
We show that a map
\begin{align*}
F_{M,N[n]} : \mc{U}(M,N) \to \mc{T}(FM,FN[n])
\end{align*}
is an isomorphism for any $M,N\in\proj\mc{A}$ and $n\in\mb{Z}$.
By Theorem \ref{rickard-theorem}, a quasi-inverse of $\ms{K}^{\rm-, b}(\mc{P})/\ms{K}^{\rm b}(\mc{P}) \to \U{\mc{F}}$ is induced from the composite of the canonical functors $\mc{F} \to \ms{K}^{\rm-, b}(\mc{P}) \to \ms{K}^{\rm-, b}(\mc{P})/\ms{K}^{\rm b}(\mc{P})$.
Therefore we have $F(\mc{A}(-,X))=\mc{A}_{0}(-,X)$ for any $X\in\mc{A}$.
For any $X,Y\in\mc{A}$, we have 
\begin{align*}
\mc{U}(\mc{A}(-,X),\mc{A}(-,Y))=\mc{A}(X,Y), \quad \mc{U}(\mc{A}(-,X),\mc{A}(-,Y)[n\neq 0])=0.
\end{align*}
Consequently, by Lemma \ref{hom-of-tilting-subcategory}, $F_{M,N[n]}$ is an isomorphism for any $M,N\in\proj\mc{A}$ and $n\in\mb{Z}$.

Since $\proj\mc{A}$ is Hom-finite and idempotent complete, so is $\ms{K}^{\rm b}(\proj\mc{A})$.
Clearly we have $\thick_{\mc{U}}(\proj\mc{A})=\mc{U}$.
Since $\Im(F|_{\proj\mc{A}})=\mc{M}$ holds, we have $\thick(\Im(F))=\mc{T}$ by Theorem \ref{thm-tilting-subcategory} (a).
Therefore $F$ is an equivalence by Lemma \ref{basic-triangulated-functor}.

(b)
Since each object of $\mod\mc{A}$ has finite projective dimension, we have $\U{\ms{GP}}(\ms{R}\mc{A},\mc{A})\simeq\U{\ms{GP}}(\ms{R}\mc{A})$.
Therefore we have the assertion by (a).
\end{proof}
\begin{corollary}\label{happel-thm-dualizing}
Let $\mc{A}$ be a dualizing $k$-variety.
If each object of $\mod\mc{A}$ and $\mod\mc{A}^{\rm op}$ has finite projective dimension,
then we have the following triangle equivalence
\begin{align*}
\ms{D}^{\rm b}(\mod\mc{A})\simeq\smod\ms{R}\mc{A}.
\end{align*}
\end{corollary}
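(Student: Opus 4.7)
The plan is to combine Corollary \ref{cor-happel-thm}(b) with two identifications, namely $\smod\ms{R}\mc{A}\simeq\U{\ms{GP}}(\ms{R}\mc{A})$ on the stable side and $\ms{K}^{\rm b}(\proj\mc{A})\simeq\ms{D}^{\rm b}(\mod\mc{A})$ on the derived side, so that the stated equivalence becomes a direct consequence of the already-proved Happel-type equivalence.

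First, I would verify that the hypotheses of Corollary \ref{cor-happel-thm}(b) are in force. Since $\mc{A}$ is a dualizing $k$-variety, so is $\mc{A}^{\rm op}$ by Example \ref{example-dualizing}(a), and by definition of dualizing $k$-variety both $\kD\mc{A}(X,-)\in\mod_1\mc{A}$ and $\kD\mc{A}(-,X)\in\mod_1\mc{A}^{\rm op}$ hold; by Lemma \ref{lem-properties-dualizing}(a) and Lemma \ref{lem-weak-mod-modinf}, $\mod_1\mc{A}=\mod\mc{A}$ and similarly on the opposite side, so both $\mc{A}$ and $\mc{A}^{\rm op}$ satisfy (IFP). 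The finiteness of projective dimension in $\mod\mc{A}$ and $\mod\mc{A}^{\rm op}$ is part of the hypothesis. Hence Corollary \ref{cor-happel-thm}(b) yields a triangle equivalence $\ms{K}^{\rm b}(\proj\mc{A})\simeq\U{\ms{GP}}(\ms{R}\mc{A})$.

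Next I would identify $\U{\ms{GP}}(\ms{R}\mc{A})$ with $\smod\ms{R}\mc{A}$. By Theorem \ref{repetitive-dualizing}, $\ms{R}\mc{A}$ is a dualizing $k$-variety, and by Lemma \ref{properties-of-repetitive}(b) it carries a Serre functor. Applying Proposition \ref{dualizing-Serre-Frobenius} to $\ms{R}\mc{A}$ gives $\ms{GP}(\ms{R}\mc{A})=\mod_{1}\ms{R}\mc{A}$; since $\ms{R}\mc{A}$ has weak kernels (Theorem \ref{repetitive-dualizing}(a)), Lemma \ref{lem-weak-mod-modinf} then gives $\mod_{1}\ms{R}\mc{A}=\mod\ms{R}\mc{A}$, so $\ms{GP}(\ms{R}\mc{A})=\mod\ms{R}\mc{A}$. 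In particular, $\mod\ms{R}\mc{A}$ is Frobenius (Proposition \ref{GP-Frobenius}) with projective objects exactly those of $\proj\ms{R}\mc{A}$, and the stable categories agree: $\smod\ms{R}\mc{A}=\U{\ms{GP}}(\ms{R}\mc{A})$ as triangulated categories.

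Finally, the identification $\ms{K}^{\rm b}(\proj\mc{A})\simeq\ms{D}^{\rm b}(\mod\mc{A})$ is the standard fact that, for an abelian category with enough projectives in which every object has finite projective dimension, the canonical embedding of the bounded homotopy category of projectives into the bounded derived category is a triangle equivalence; all hypotheses hold here because $\mc{A}$ has weak kernels (so $\mod\mc{A}$ is abelian by Lemma \ref{lem-weak-mod-modinf}) and the finiteness assumption is in the hypothesis. Chaining these three equivalences produces the desired $\ms{D}^{\rm b}(\mod\mc{A})\simeq\smod\ms{R}\mc{A}$. The proof is essentially bookkeeping: there is no real obstacle, but one must be careful that the sundry definitional versions of $\ms{GP}$, $\mod_n$, $\mod$, and the stable category are lined up using the precise equivalences in Proposition \ref{dualizing-Serre-Frobenius} and Lemma \ref{lem-weak-mod-modinf}.
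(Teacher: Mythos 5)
Your proposal is correct and follows essentially the same route as the paper: the paper's two-sentence proof (asserting $\ms{GP}(\ms{R}\mc{A})=\mod\ms{R}\mc{A}$ for the dualizing $k$-variety $\ms{R}\mc{A}$ and then invoking Corollary \ref{cor-happel-thm}) compresses exactly the chain of identifications you spell out. Your version merely makes explicit the appeals to Theorem \ref{repetitive-dualizing}, Lemma \ref{properties-of-repetitive}(b), Proposition \ref{dualizing-Serre-Frobenius}, Lemma \ref{lem-weak-mod-modinf}, and the standard $\ms{K}^{\rm b}(\proj\mc{A})\simeq\ms{D}^{\rm b}(\mod\mc{A})$ that the paper leaves implicit.
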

\begin{proof}
If $\mc{A}$ is a dualizing $k$-variety, then $\ms{GP}(\ms{R}\mc{A})=\mod\ms{R}\mc{A}$ holds.
The assertion directly follows from Corollary \ref{cor-happel-thm}.
\end{proof}
\section{Proof of Theorem \ref{intro-thm-aim}}\label{section-application-hereditary}
Throughout this section, let $k$ be an algebraically closed field.
Let $A$ be a finite dimensional hereditary $k$-algebra, that is, $\gl(A)\leq 1$.
In this section, we apply Corollary \ref{happel-thm-dualizing} to $\smod A$ and show Theorem \ref{thm-hered-stable-derived}.

We denote by $\mod A$ the category of the finitely generated $A$-modules and denote by $\tau$ and $\tau^{-1}$ the Auslander-Reiten translations on $\mod A$.
We call an indecomposable $A$-module $M$ \emph{preprojective} (resp. \emph{preinjective}) if there exists an indecomposable projective $A$-module $P$ (resp. injective $A$-module $I$) and an integer $i$ such that $M\simeq \tau^{i}(P)$ (resp. $M\simeq\tau^{i}(I)$).
We call an indecomposable $A$-module $M$ \emph{regular} if $\tau^{i}(M)\neq 0$ for any $i\in\mb{Z}$.
Put the following subcategories of $\mod A$:
\begin{align*}
&\mc{P}:=\add\{ M\in\mod A \mid M \hspace{4pt} \textnormal{is a preprojective module}\},\\
&\mc{I}:=\add\{M\in\mod A\mid M \hspace{4pt} \textnormal{is a preinjective module}\},\\
&\mc{R}:=\add\{M\in\mod A\mid M \hspace{4pt} \textnormal{is a regular module}\}.
\end{align*}
We denote by $\ms{D}^{\rm b}(\mod A)$ the bounded derived category of $\mod A$ and denote by $\mb{S}$ a Serre functor of $\ms{D}^{\rm b}(\mod A)$. 
We regard $\mod A$ as a full subcategory of $\ms{D}^{\rm b}(\mod A)$ by the canonical inclusion.
Thus for any $X\in\ms{D}^{\rm b}(\mod A)$, $X\in\mod A$ if and only if $\Ho^{i}(X)=0$ for any $i\neq0$.

The following proposition is well known (see \cite[Chapter V\hspace{-.1em}I\hspace{-.1em}I\hspace{-.1em}I. 2.1. Proposition]{ASS} \cite[Chapter I, 5.2, Lemma]{H}).
\begin{proposition}\label{prop-trichotomy}
Let $A$ be a representation infinite hereditary algebra.
Then we have the following equalities.
\begin{align*}
&\ms{D}^{\rm b}(\mod A)=\bigvee_{i\in\mb{Z}}(\mod A)[i],\\
&\mod A=\mc{P}\vee\mc{R}\vee\mc{I}.
\end{align*}
\end{proposition}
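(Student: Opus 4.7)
The plan is to prove the two equalities separately; both are classical facts, and the approach is standard but worth sketching.

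For the first equality $\ms{D}^{\rm b}(\mod A)=\bigvee_{i\in\mb{Z}}(\mod A)[i]$, I would use that $A$ is hereditary, so $\gl(A)\leq 1$ and hence $\Ext^{j}_{A}(M,N)=0$ for all $M,N\in\mod A$ and $j\geq 2$. Given $X\in\ms{D}^{\rm b}(\mod A)$, consider the standard truncation triangles
\begin{equation*}
\sigma_{<n}X\to\sigma_{\leq n}X\to\Ho^{n}(X)[-n]\to\sigma_{<n}X[1].
\end{equation*}
The connecting morphism is an element of $\Hom_{\ms{D}^{\rm b}(\mod A)}(\Ho^{n}(X)[-n],\sigma_{<n}X[1])$, which, by iterated use of the long exact sequence coming from the cohomology filtration of $\sigma_{<n}X$, reduces to a direct sum of groups of the form $\Ext^{j}_{A}(\Ho^{n}(X),\Ho^{n-j+1}(X))$ with $j\geq 2$, all of which vanish. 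Hence each such triangle splits, and by induction on the length of the support of $\Ho^{\bullet}(X)$ one obtains $X\simeq\bigoplus_{i\in\mb{Z}}\Ho^{i}(X)[-i]$, which is the desired equality.

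For the second equality $\mod A=\mc{P}\vee\mc{R}\vee\mc{I}$, I would invoke the structure of the Auslander--Reiten quiver of $\mod A$ for $A$ representation infinite hereditary. By definition every indecomposable module lies in $\mc{P}\cup\mc{R}\cup\mc{I}$, so one only needs the decomposition to hold additively. The key inputs are: (i) the preprojective component of the AR-quiver consists precisely of the $\tau^{i}(P)$ for $P$ indecomposable projective and $i\leq 0$, and is closed under $\tau^{-1}$; dually for the preinjective component; (ii) for representation infinite $A$, these two components are disjoint from each other and from any regular component, because if $\tau^{i}(P)\simeq \tau^{j}(I)$ for some indecomposable projective $P$ and injective $I$, then $A$ would be representation finite by a standard argument on $\tau$-orbits (cf.\ \cite[Chapter V\hspace{-.1em}I\hspace{-.1em}I\hspace{-.1em}I]{ASS}). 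Together these force the AR-quiver, and therefore $\mod A$ additively, to decompose as $\mc{P}\vee\mc{R}\vee\mc{I}$.

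The main step to be careful about is the disjointness in the second equality: one has to rule out that some $\tau$-orbit connects the preprojective and preinjective components, and this is exactly the place where the representation infinite hypothesis is used. Since both equalities are standard and the paper only cites them, I would simply give the truncation-triangle argument for the first and appeal to the cited references \cite{ASS, H} for the AR-theoretic input of the second.
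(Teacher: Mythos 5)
Your proposal is correct. The paper itself gives no proof of this proposition---it only cites \cite[Ch.\ VIII.2.1]{ASS} and \cite[Ch.\ I, 5.2]{H}---and your sketches reproduce exactly the arguments one finds in those references: the vanishing of $\Ext^{\geq 2}$ in a hereditary abelian category forces every bounded complex to split into its cohomology, and the AR-theory of $\mod A$ gives the trichotomy of indecomposables. One small remark on emphasis: the equality $\mod A=\mc{P}\vee\mc{R}\vee\mc{I}$, with $\vee$ meaning additive closure as defined in the paper's notation, follows already from the fact that every indecomposable lies in $\mc{P}\cup\mc{R}\cup\mc{I}$ together with Krull--Schmidt; this trichotomy is essentially immediate from the definitions (either $\tau^{n}M=0$ for some $n>0$, or $\tau^{-n}M=0$ for some $n>0$, or neither), and the representation-infinite hypothesis is not actually needed for it. The disjointness of $\mc{P}$, $\mc{R}$, $\mc{I}$ for connected representation-infinite $A$, which you spend most of your second paragraph on, is a true and important fact but is stronger than what the displayed equality asserts, so that part of your argument is not strictly required here.
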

We denote by $\mod_{\rm p}A$ the full subcategory of $\mod A$ consisting of modules without non-zero projective direct summands.
We define an additive functor \[\Phi : \ms{R}(\mod_{\rm p}A)\to \ms{D}^{\rm b}(\mod A)\] as follows.
For $X\in\mod_{\rm p}A$ and $i\in\mb{Z}$, let $\Phi(X,i):=\mb{S}^{i}(X)$.
For $X,Y\in\mod_{\rm p}A$ and $i,j\in\mb{Z}$, since $\mb{S}$ is a Serre functor of $\ms{D}^{\rm b}(\mod A)$, we have 
\begin{align*}
\Hom_{\ms{D}^{\rm b}(\mod A)}\bigl(\mb{S}^{i}(X), \mb{S}^{j}(Y)\bigr)\simeq
			\begin{cases}
				\Hom_{\ms{D}^{\rm b}(\mod A)}(X,Y) & i=j, \\
				\kD\Hom_{\ms{D}^{\rm b}(\mod A)}(Y,X) & j=i+1, \\
				0 & \textnormal{else},
			\end{cases}
\end{align*}
where the last isomorphism follows from Lemma \ref{SC0}.
By using these isomorphisms, we define a map
\begin{align*}
\Phi_{(X,i),(Y,j)}:\Hom_{\ms{R}(\mod_{\rm p}A)}((X,i),(Y,j))\to\Hom_{\ms{D}^{\rm b}(\mod A)}\bigl(\mb{S}^{i}(X), \mb{S}^{j}(Y)\bigr),
\end{align*}
and we extend $\Phi$ on $\ms{R}(\mod_{\rm p}A)$ additively.
$\Phi$ is actually a functor, since a Serre duality is bifunctorial.
\begin{lemma}\label{SC0}
Let $A$ be a representation infinite hereditary algebra.
For any $i<0$ and $j>1$, we have
\begin{align*}
\mb{S}^{i}(\mod_{\rm p}A)\subset \add(A)\vee\bigvee_{l < 0}\mod A\,[l],\quad
\mb{S}^{j}(\mod_{\rm p}A)\subset \add(\kD A)\vee\bigvee_{l > 1}\mod A\,[l].
\end{align*}
\end{lemma}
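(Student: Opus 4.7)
The plan is to use the fact that on $\ms{D}^{\rm b}(\mod A)$ for hereditary $A$ the Serre functor satisfies $\mb{S}(X)=\nu(X)\oplus\tau(X)[1]$ for every $X\in\mod A$: the natural triangle $\tau(X)[1]\to\mb{S}(X)\to\nu(X)\to\tau(X)[2]$ splits because its connecting morphism lies in $\Ext^{2}_{A}(\nu X,\tau X)=0$. In particular $\mb{S}$ sends $P\in\add A$ to $\nu(P)\in\add(\kD A)$ in degree zero, and any module $X$ with $\Hom_{A}(X,A)=0$ to $\tau(X)[1]\in(\mod A)[1]$. For $A$ representation infinite, the trichotomy $\mod A=\mc{P}\vee\mc{R}\vee\mc{I}$ together with the standard vanishings $\Hom(\mc{R},\mc{P})=\Hom(\mc{I},\mc{P})=0$ and the directional vanishing $\Hom(\tau^{-n}P,P')=0$ for $n\geq 1$ inside the preprojective component shows $\Hom(\mod_{\rm p}A,A)=0$, so the second formula applies to every $X\in\mod_{\rm p}A$. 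The dual statements give $\mb{S}^{-1}(I)=\nu^{-1}(I)\in\add A$ for injective $I$ and $\mb{S}^{-1}(Y)=\tau^{-1}(Y)[-1]\in(\mod A)[-1]$ when $Y$ has no injective summand.

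For the first inclusion I would induct on $n\geq 1$ to show $\mb{S}^{-n}(\mod_{\rm p}A)\subset\mc{C}^{-}:=\add A\vee\bigvee_{l<0}(\mod A)[l]$. In the base case, write any $X\in\mod_{\rm p}A$ as $X_{\rm inj}\oplus X'$ where $X_{\rm inj}$ is a maximal injective summand; then $\mb{S}^{-1}(X_{\rm inj})\in\add A$ and $\mb{S}^{-1}(X')=\tau^{-1}(X')[-1]\in(\mod A)[-1]$, so $\mb{S}^{-1}(X)\in\mc{C}^{-}$. For the inductive step I verify that $\mc{C}^{-}$ is stable under $\mb{S}^{-1}$: an indecomposable projective $P$ is sent to $\add A$ if $P$ is also injective and to $(\mod A)[-1]$ otherwise, while for any $M\in\mod A$ and $l\leq -1$ the object $\mb{S}^{-1}(M)[l]$ lies in $\add(A)[l]\vee(\mod A)[l-1]\subset\bigvee_{l'<0}(\mod A)[l']$.

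The second inclusion is proved by a dual induction, using that $\mb{S}$ sends projectives in degree zero to injectives in the same degree and non-projective modules to modules shifted by one; the class $\add(\kD A)\vee\bigvee_{l>1}(\mod A)[l]$ is shown to be stable under $\mb{S}$ by the same bookkeeping. The only real subtlety is correctly tracking, at each iteration, the \textquotedblleft degenerate\textquotedblright{} summands that are sent by $\mb{S}^{\pm 1}$ into a single degree (injectives under $\mb{S}^{-1}$, projectives under $\mb{S}$) versus the \textquotedblleft generic\textquotedblright{} summands that get shifted by $\pm 1$; once this bookkeeping is straight and the stable subcategories are fixed, the induction closes mechanically.
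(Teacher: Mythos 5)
Your proof of the \emph{first} inclusion is sound and fills in what the paper leaves to the reader: after splitting $\mb{S}(X)\cong\nu(X)\oplus\tau(X)[1]$ (formality in $\ms{D}^{\rm b}$ of a hereditary algebra), you observe that $\Hom_A(\mod_{\rm p}A,A)=0$ so $\mb{S}$ acts as $\tau[1]$ on $\mod_{\rm p}A$, and your induction showing $\mc{C}^-:=\add A\vee\bigvee_{l<0}(\mod A)[l]$ is stable under $\mb{S}^{-1}$ is correct. (A small remark: you don't need the trichotomy or any ``directional vanishing'' for $\Hom(\mod_{\rm p}A,A)=0$ — over a hereditary algebra, the image of any map from an indecomposable non-projective module to a projective is a projective submodule, hence a direct summand of the source, hence $0$; this works for representation-finite $A$ too.)

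The second inclusion is where there is a genuine gap. You assert that $\mc{C}^+:=\add(\kD A)\vee\bigvee_{l>1}(\mod A)[l]$ is stable under $\mb{S}$ ``by the same bookkeeping,'' but it is not: an injective module $I$ without projective summands is sent to $\tau(I)[1]\in(\mod A)[1]$, which is not in $\mc{C}^+$. In fact the inclusion as stated in the lemma is false. Take $P$ an indecomposable projective non-injective $A$-module and $X:=\tau^{-1}(P)\in\mod_{\rm p}A$ (this exists and is non-projective since $A$ is representation infinite). Then $\mb{S}(X)=\tau(X)[1]=P[1]$, hence $\mb{S}^{2}(X)=\mb{S}(P)[1]=\nu(P)[1]$, a nonzero injective concentrated in cohomological degree $-1$. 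Objects of $\add(\kD A)\vee\bigvee_{l>1}(\mod A)[l]$ have cohomology only in degree $0$ or in degrees $\le -2$, so $\mb{S}^{2}(X)\notin\mc{C}^+$. Tracking the degrees correctly gives $\mb{S}^{j}(\mod_{\rm p}A)\subset\add(\kD A)[1]\vee\bigvee_{l>1}(\mod A)[l]$ for $j>1$ — the $\add(\kD A)$ term needs a shift, which looks like a typo in the paper. Fortunately this does not affect Theorem~\ref{thm-rep-derived}: the vanishing $\Hom(\mb{S}^{i}X,\mb{S}^{j}Y)=0$ for $j-i>1$ follows from Serre duality and the \emph{first} inclusion alone, since $\Hom(\mb{S}^{i}X,\mb{S}^{j}Y)\cong\kD\Hom(Y,\mb{S}^{1-(j-i)}X)$ with $1-(j-i)<0$, and $\Hom(Y,A)=0=\Hom(Y,(\mod A)[l])$ for $l<0$. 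You should not claim the second inclusion closes ``mechanically''; it does not, and the clean fix is either to add the $[1]$ shift or to drop the second inclusion and dualize.
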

\begin{proof}
The assertions come from Proposition \ref{prop-trichotomy}.
\end{proof}
The first theorem of this section is the following.
Put $\mb{S}_{1}:=\mb{S}\circ[-1]$.
Note that $H^{0}(\mb{S}_{1}(M))\simeq \tau(M)$ and $H^{0}(\mb{S}^{-1}_{1}(M))\simeq \tau^{-1}(M)$ hold for any $M\in\mod A$.
\begin{theorem}\label{thm-rep-derived}
The functor $\Phi : \ms{R}(\mod_{\rm p}A)\to \ms{D}^{\rm b}(\mod A)$ is an equivalence of additive categories.
\end{theorem}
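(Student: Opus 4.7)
The strategy is to verify that $\Phi$ is fully faithful and essentially surjective.

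For fully faithfulness, the cases $i = j$ and $j = i+1$ are essentially built into the definition of $\Phi$: in the first case the map on Hom-spaces is the canonical identification $\Hom_{\mod A}(X, Y) = \Hom_{\ms{D}^{\rm b}(\mod A)}(X, Y)$ coming from the embedding $\mod A \hookrightarrow \ms{D}^{\rm b}(\mod A)$, and in the second it is the Serre duality isomorphism
\[
\Hom_{\ms{D}^{\rm b}(\mod A)}(\mb{S}^{i}X,\, \mb{S}^{i+1}Y) \simeq \Hom_{\ms{D}^{\rm b}(\mod A)}(X, \mb{S} Y) \simeq \kD\Hom_{\ms{D}^{\rm b}(\mod A)}(Y, X).
\]
The remaining ``else'' case $|j - i| \geq 2$ reduces, by $\mb{S}$-equivariance, to proving $\Hom_{\ms{D}^{\rm b}(\mod A)}(X, \mb{S}^{n}Y) = 0$ for $X, Y \in \mod_{\rm p}A$ and $|n| \geq 2$. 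For $n \geq 2$, Lemma \ref{SC0} pins down the shape of $\mb{S}^{n}Y$: its indecomposable summands are either injective modules concentrated in some positive degree, in which case the relevant Hom is an $\Ext^{l}(X, I)$ with $l \geq 1$ that vanishes since $I$ is injective, or arbitrary modules concentrated in degree $\geq 2$, in which case the Hom is an $\Ext^{\geq 2}$ that vanishes because $A$ is hereditary. The case $n \leq -2$ is dual, via the Serre duality isomorphism swapping the roles of $X$ and $Y$.

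For essential surjectivity, Proposition \ref{prop-trichotomy} decomposes $\ms{D}^{\rm b}(\mod A) = \bigvee_{l\in\mb{Z}}(\mod A)[l]$, so it suffices to represent each indecomposable $M[l]$ (with $M$ indecomposable in $\mod A$ and $l \in \mb{Z}$) as $\mb{S}^{i}X$ with $X \in \mod_{\rm p}A$ indecomposable and $i \in \mb{Z}$. The plan is to walk along the $\mb{S}$-orbit of $M[l]$ until a degree-$0$ representative without projective summand is reached. When $M$ is regular this is automatic: $X := \tau^{-l}M$ is regular, hence non-projective and non-injective, so $X \in \mod_{\rm p}A$ and $\mb{S}^{l}X \simeq M[l]$, using the identity $\mb{S}(N) = \tau N[1]$ on indecomposable non-projective $N$. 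When $M$ is preprojective or preinjective, one uses that a connected representation infinite hereditary algebra has no projective-injective indecomposable, which keeps the preprojective and preinjective components distinct and infinite; an explicit walk along the $\mb{S}$-orbit, combining $\mb{S}(N) = \tau N[1]$ on non-projective $N$ with $\mb{S}(P) = \nu P$ (the Nakayama translate, with no degree shift) on projective $P$, then produces the required $(X, i)$. The main obstacle is exactly this preprojective/preinjective case analysis: because $\mb{S}$ alternates between $\tau[1]$ and $\nu$ without shift, the degree along an $\mb{S}$-orbit jumps irregularly whenever the orbit passes through a projective or injective module, and one must track carefully when the orbit re-enters $\mod_{\rm p}A$ at degree $0$.
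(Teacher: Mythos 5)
Your overall strategy coincides with the paper's: verify full faithfulness using the Serre-duality isomorphisms built into the definition of $\Phi$ together with the vanishing from Lemma \ref{SC0}, then prove essential surjectivity by the trichotomy of Proposition \ref{prop-trichotomy}. The full-faithfulness discussion is fine modulo a small slip (the ``else'' case is $j-i\leq -1$ or $j-i\geq 2$, not $|j-i|\geq 2$; but $j-i=-1$ reduces by one more Serre duality to $j-i=2$, so nothing breaks), and the regular case of essential surjectivity is complete and identical to the paper's.

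The genuine gap is precisely where you flag it: the preprojective and preinjective cases of essential surjectivity are described (``an explicit walk along the $\mb{S}$-orbit \ldots then produces the required $(X,i)$''; ``one must track carefully'') but never carried out, and this case analysis is in fact the bulk of the paper's proof. The paper's resolution is worth spelling out because it is not merely bookkeeping: writing a preprojective indecomposable as $M=\mb{S}_1^{-i}(P)=\tau^{-i}P$ with $P$ projective and $i\geq 0$, one splits on the sign of $i+l$. If $i+l>0$ then $\mb{S}_1^{-(i+l)}(P)=\tau^{-(i+l)}P$ is a non-projective preprojective module and maps under $\Phi$ to $M[l]$; if $i+l\leq 0$ the orbit has crossed the projective $P$, and one instead uses $\mb{S}_1^{-(i+l)}(\mb{S}(P))=\tau^{-(i+l)}(\nu P)$, a preinjective module (non-projective because a representation-infinite hereditary algebra has no projective-injective indecomposable), again mapping to $M[l]$. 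The preinjective case is dual with $i-l$. Your sketch correctly identifies the two regimes and the key non-existence of projective-injectives, but without this explicit split by sign your proposal does not yet constitute a proof of density.
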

\begin{proof}
By the definition, $\Phi$ is fully faithful.
We show that $\Phi$ is dense.
Let $X$ be an indecomposable object of $\ms{D}^{\rm b}(\mod A)$.
By Proposition \ref{prop-trichotomy}, there exist an indecomposable $A$-module $M$ and an integer $l$ such that $X\simeq M[l]$.

Assume that $M$ is a preprojective module.
There exist an indecomposable projective $A$-module $P$ and $i\geq 0$ such that $M\simeq \mb{S}_{1}^{-i}(P)$.
If $i+l>0$, then we have $\mb{S}_{1}^{-(i+l)}(P)\in\mod_{\rm p}A$ and
\begin{align*}
\Phi(\mb{S}_{1}^{-(i+l)}(P),-l)&=\mb{S}^{l}(\mb{S}_{1}^{-(i+l)}(P))\\
&=\mb{S}_{1}^{-i}(P)[l].
\end{align*}
If $i+l\leq 0$, then we have $\mb{S}_{1}^{-(i+l)}(\mb{S}(P))\in\mod_{\rm p}A$ and
\begin{align*}
\Phi(\mb{S}_{1}^{-(i+l)}(\mb{S}(P)),-l+1)&=\mb{S}^{l-1}(\mb{S}_{1}^{-(i+l)}(\mb{S}(P)))\\
&=\mb{S}_{1}^{-i}(P)[l].
\end{align*}

Next assume that $M$ is a preinjective module.
There exist an indecomposable injective $A$-module $I$ and $i\geq 0$ such that $M\simeq \mb{S}_{1}^{i}(I)$.
If $i-l\geq 0$, then we have $\mb{S}_{1}^{i-l}(I)\in\mod_{\rm p}A$ and
\begin{align*}
\Phi(\mb{S}_{1}^{i-l}(I),-l)&=\mb{S}^{l}(\mb{S}_{1}^{i-l}(I))\\
&=\mb{S}_{1}^{i}(I)[l].
\end{align*}
If $i-l<0$, then we have $\mb{S}_{1}^{i-l}(\mb{S}^{-1}(I))\in\mod_{\rm p}A$ and
\begin{align*}
\Phi(\mb{S}_{1}^{i-l}(\mb{S}^{-1}(I)),-l-1)&=\mb{S}^{l+1}(\mb{S}_{1}^{i-l}(\mb{S}^{-1}(I)))\\
&=\mb{S}_{1}^{i}(I)[l].
\end{align*}

Assume that $M$ is a regular module.
Then we have $\mb{S}_{1}^{-l}(M)\in\mc{R}\subset\mod_{\rm p}A$ and $\Phi(\mb{S}_{1}^{-l}(M),-l)=\mb{S}^{l}(\mb{S}_{1}^{-l}(M))=M[l]$ holds.
Therefore the functor $\Phi : \ms{R}(\mod_{\rm p}A)\to \mc{D}$ is dense.
\end{proof}
Theorem \ref{thm-rep-derived} is an analog of the well known equivalence $\ms{D}^{\rm b}(\mc{H})\simeq\ms{Rep}\,\mc{H}$ for a hereditary abelian category $\mc{H}$ \cite[Theorem 3.1]{Lenzing}.
But they are quite different, since the definitions of $\ms{Rep}\,\mc{H}$ and $\ms{R}(\smod A)$ are quite different.

We recall the following proposition.
\begin{proposition}\cite[Propositions 6.2, 10.2]{AR74}\label{ar74-proposition-6.2-10.2}
Let $\mc{A}$ be a dualizing $k$-variety and $\mc{B}:=\mod\mc{A}$. Let $\mc{P}$ be the full subcategory of $\mc{B}$ consisting of the projective modules.
Then the following statements hold.
\begin{itemize}
\item[{\rm (a)}]
$\mc{B}/[\mc{P}]$ is a dualizing $k$-variety.
\item[{\rm (b)}]
Assume that the global dimension of $\mod\mc{A}$ is at most $n$,
then the global dimension of $\mod(\mc{B}/[\mc{P}])$ is at most $3n-1$.
\end{itemize}
\end{proposition}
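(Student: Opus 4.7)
The plan is to verify the three defining conditions of a dualizing $k$-variety for $\U{\mc{B}}:=\mc{B}/[\mc{P}]$ and then bound the global dimension of its module category. For part (a), Hom-finiteness of $\U{\mc{B}}$ is immediate, each $\U{\mc{B}}(X,Y)$ being a quotient of the finite-dimensional $\mc{B}(X,Y)$. Idempotent-completeness is inherited: $\mc{B}=\mod\mc{A}$ is Krull-Schmidt by Proposition \ref{ex-Krull-Schmidt}, whence its quotient by an ideal remains Krull-Schmidt and so idempotent-complete by Proposition \ref{prop-Krull-Schmidt}.

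The core of (a) is the duality $\kD\colon\mod_{1}\U{\mc{B}}\to\mod_{1}\U{\mc{B}}^{\rm op}$. First I would show that $\U{\mc{B}}$ (and dually $\U{\mc{B}}^{\rm op}$) has weak kernels: given $\U{f}\colon Y\to Z$ in $\U{\mc{B}}$, lift to $f\colon Y\to Z$ in $\mc{B}$, take a projective cover $\pi\colon P\to Z$ in $\mc{B}$ (which exists by Lemma \ref{lem-properties-dualizing} (c)), and let $K$ be the kernel in $\mc{B}$ of $(f,\pi)\colon Y\oplus P\to Z$. The composite $K\hookrightarrow Y\oplus P\twoheadrightarrow Y$ is then a weak kernel of $\U{f}$ in $\U{\mc{B}}$, because any $g\colon W\to Y$ with $\U{fg}=0$ factors $fg$ through some projective and, since $\pi$ is epi from a projective, this factorization pushes through $\pi$, producing a lift to $K$. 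Given $F\in\mod_{1}\U{\mc{B}}$ presented by $\U{\mc{B}}(-,Y_{1})\to\U{\mc{B}}(-,Y_{0})\to F\to 0$, one lifts to a diagram in $\mod\mc{B}$ using the short exact sequences $0\to L_{i}\to\mc{B}(-,Y_{i})\to\U{\mc{B}}(-,Y_{i})\to 0$, where $L_{i}$ is the subfunctor of morphisms factoring through projectives (finitely generated because projective covers exist in $\mc{B}$). The duality on $\mod\mc{B}$, available because $\mc{B}=\mod\mc{A}$ is itself a dualizing $k$-variety by Lemma \ref{lem-properties-dualizing} (b), then produces a finite presentation of $\kD F$ over $\U{\mc{B}}^{\rm op}$, after identifying the dual of $\U{\mc{B}}(-,Y)$ via an Auslander-Reiten-type calculation.

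For (b), the plan is to translate a projective resolution of $F\in\mod\U{\mc{B}}$ into successive weak kernels in $\U{\mc{B}}$, each of which, lifted to $\mc{B}$, is the kernel of a morphism extended by a projective cover. The hypothesis $\gl\mc{B}\leq n$ forces the $n$-th iterated kernel in $\mc{B}$ to become projective, hence to vanish in $\U{\mc{B}}$. The factor of three appears because each step in $\mod\U{\mc{B}}$ absorbs three kinds of cost in $\mc{B}$: presenting the quotient $\U{\mc{B}}(-,Y)$ by the representable $\mc{B}(-,Y)$, handling the kernel of its projective cover, and advancing one step of the projective resolution in $\mc{B}$.

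The main obstacle, in both parts, lies in (b): obtaining the precise constant $3n-1$ rather than a crude $O(n)$ bound. Tracking which $\mc{B}$-step can be shared across multiple $\U{\mc{B}}$-steps, and avoiding double-counting of the terminating projective cover at each homological degree, is where most of the technical effort is concentrated.
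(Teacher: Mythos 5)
The paper does not prove this proposition; it is cited verbatim from Auslander--Reiten \cite[Propositions~6.2, 10.2]{AR74}, so there is no internal proof to compare against. Evaluated on its own terms, your sketch is pointing in the right general direction but falls short in two places, one of which you yourself concede.

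For part (a), the preliminaries are fine: Hom-finiteness and idempotent completeness of $\mc{B}/[\mc{P}]$ are correctly handled, and the weak-kernel construction via $K=\ker\bigl((f,\pi)\colon Y\oplus P\to Z\bigr)$ with $\pi$ a projective cover is the standard and correct device, as is your observation that the subfunctor $L_{i}\subset\mc{B}(-,Y_{i})$ of maps factoring through a projective is finitely generated (it is the image of $\mc{B}(-,\pi)$). But the actual content of the duality claim is buried in the phrase ``after identifying the dual of $\U{\mc{B}}(-,Y)$ via an Auslander--Reiten-type calculation.'' That calculation \emph{is} the proof: one has to show that $\kD\U{\mc{B}}(-,Y)$, viewed as a $\U{\mc{B}}^{\rm op}$-module, is finitely presented, and this is exactly the Auslander--Reiten formula computing $\kD\underline{\Hom}(-,Y)$ in terms of $\Ext^{1}$ or the translate $\tau$. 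Establishing that this object has a two-term resolution by representables over $\U{\mc{B}}^{\rm op}$ (and not merely that it is some abstract functor) is where all the work lies, and your sketch does not carry it out; it is precisely the step that distinguishes the stable-category result from the much easier statement that $\mod\mc{A}$ itself is a dualizing variety.

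For part (b), the gap is more serious and you acknowledge it explicitly: you offer a heuristic count (``three kinds of cost'') for why the constant should be $3n-1$, but you do not actually construct, from a projective resolution of length $n$ in $\mod\mc{B}$, a projective resolution of length $3n-1$ in $\mod\U{\mc{B}}$, nor do you show that the $n$-th iterated kernel becoming projective in $\mc{B}$ forces termination at the stated degree rather than somewhat later. As written, part (b) is a statement of intent rather than a proof, and since the bound $3n-1$ is the only quantitative content of the claim, the argument cannot be considered complete without it. To close both gaps you would essentially have to reproduce the arguments of \cite[\S\S 6, 10]{AR74}, which is presumably why the paper simply cites them.
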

Then we apply Corollary \ref{happel-thm-dualizing} to $\smod A$.
\begin{theorem}\label{thm-hered-stable-derived}
Let $A$ be a representation infinite hereditary algebra.
Then we have the following triangle equivalences
\begin{align*}
\smod\ms{D}^{\rm b}(\mod A)\simeq
\smod\ms{R}(\smod A)\simeq
\ms{D}^{\rm b}(\mod(\smod A)).
\end{align*}
\end{theorem}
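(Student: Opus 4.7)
The plan is to establish the two triangle equivalences separately and chain them. The first, $\smod\ms{D}^{\rm b}(\mod A) \simeq \smod\ms{R}(\smod A)$, will come from transferring Theorem \ref{thm-rep-derived} to module categories; the second, $\smod\ms{R}(\smod A) \simeq \ms{D}^{\rm b}(\mod(\smod A))$, will be Corollary \ref{happel-thm-dualizing} applied to $\mc{A}=\smod A$.

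For the first equivalence, I would first observe that since $A$ is hereditary the canonical functor $\mod_{\rm p}A \to \smod A$ is an equivalence of additive categories. Density and fullness are clear; faithfulness holds because if $f : M \to N$ with $M \in \mod_{\rm p}A$ factors as $M \xto{g} P \xto{h} N$ through a projective $P$, then $\Im(g)$ is a projective submodule of $P$ by hereditariness, so the surjection $M \twoheadrightarrow \Im(g)$ onto a projective splits, and $M \in \mod_{\rm p}A$ forces $\Im(g) = 0$. Combining this with Theorem \ref{thm-rep-derived} yields an additive equivalence $\ms{R}(\smod A) \simeq \ms{D}^{\rm b}(\mod A)$, which induces an equivalence $\mod\ms{R}(\smod A) \simeq \mod\ms{D}^{\rm b}(\mod A)$ of finitely presented module categories sending representables to representables. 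Both are Frobenius abelian categories (the former by Theorem \ref{into-thm-repetitive-dualizng} once $\smod A$ is dualizing, the latter by the standard fact for module categories over triangulated categories), and the projective-injective objects on each side coincide with the representable functors, so the equivalence descends to a triangle equivalence of stable categories.

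For the second equivalence, I would verify the hypotheses of Corollary \ref{happel-thm-dualizing} for $\mc{A} := \smod A$. By Example \ref{example-dualizing}(b), $\proj A$ is a dualizing $k$-variety with $\mod(\proj A) \simeq \mod A$, so Proposition \ref{ar74-proposition-6.2-10.2}(a) shows that $\smod A$ is a dualizing $k$-variety. Since $A$ is hereditary, $\gl(\mod A) \leq 1$, and Proposition \ref{ar74-proposition-6.2-10.2}(b) gives $\gl(\mod(\smod A)) \leq 3\cdot 1-1 = 2$; the same bound for $\mod((\smod A)^{\rm op})$ follows from the $\kD$-duality $\mod(\smod A) \simeq \mod((\smod A)^{\rm op})^{\rm op}$ intrinsic to the dualizing $k$-variety structure on $\smod A$ (sending a projective resolution to an injective resolution, hence transferring global dimension). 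Every object of $\mod(\smod A)$ and of $\mod((\smod A)^{\rm op})$ thus has finite projective dimension, and Corollary \ref{happel-thm-dualizing} delivers $\smod\ms{R}(\smod A) \simeq \ms{D}^{\rm b}(\mod(\smod A))$. Concatenation with the first equivalence completes the proof.

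I do not anticipate a substantial obstacle, since the heavy lifting already sits in Theorem \ref{thm-rep-derived}, Theorem \ref{into-thm-repetitive-dualizng}, Corollary \ref{happel-thm-dualizing}, and Proposition \ref{ar74-proposition-6.2-10.2}. The only genuinely subtle points are the identification $\mod_{\rm p}A \simeq \smod A$, which uses hereditariness in an essential way, and confirming that the additive equivalence of Theorem \ref{thm-rep-derived} transports the Frobenius structure to descend to a triangle equivalence of stable categories, which is automatic because representable functors are sent to representable functors.
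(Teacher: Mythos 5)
Your proposal is correct and takes essentially the same route as the paper: you prove the first equivalence by combining the identification $\mod_{\rm p}A\simeq\smod A$ with Theorem \ref{thm-rep-derived} (and then transferring the additive equivalence to a triangle equivalence of stable module categories), and you prove the second by applying Corollary \ref{happel-thm-dualizing} to $\smod A$ after verifying its hypotheses via Example \ref{example-dualizing} and Proposition \ref{ar74-proposition-6.2-10.2}. The only differences are that you spell out details (faithfulness of $\mod_{\rm p}A\to\smod A$, the $\kD$-duality needed to bound $\gl\mod((\smod A)^{\rm op})$, the descent of the additive equivalence to stable categories) that the paper treats as immediate.
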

\begin{proof}
Since $A$ is hereditary, a canonical functor $\mod_{\rm p}A\to\smod A$ induces an equivalence $\mod_{\rm p}A\simeq\smod A$.
Therefore the first equivalence comes from Theorem \ref{thm-rep-derived}.
By Proposition \ref{ar74-proposition-6.2-10.2}, $\smod A$ is a dualizing $k$-variety such that the global dimension of $\mod(\smod A)$ is at most two.
Therefore we can apply Corollary \ref{happel-thm-dualizing} to the dualizing $k$-variety $\smod A$.
We have the second equivalence.
\end{proof}
We say that two dualizing $k$-varieties $\mc{A}$ and $\mc{A}^{\prime}$ are derived equivalent if the derived categories of $\mod\mc{A}$ and $\mod\mc{A}^{\prime}$ are triangle equivalent.
\begin{corollary}\label{cor}
Let $A,A^{\prime}$ be representation infinite hereditary algebras.
If $A$ and $A^{\prime}$ are derived equivalent,
then $\smod A$ and $\smod A^{\prime}$ are derived equivalent.
\end{corollary}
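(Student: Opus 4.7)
The plan is to assemble three triangle equivalences. By Theorem \ref{thm-hered-stable-derived} applied to $A$ and $A^{\prime}$ we have
$$\ms{D}^{\rm b}(\mod(\smod A))\simeq \smod\ms{D}^{\rm b}(\mod A),\qquad \ms{D}^{\rm b}(\mod(\smod A^{\prime}))\simeq \smod\ms{D}^{\rm b}(\mod A^{\prime}),$$
so the task reduces to showing that the given derived equivalence between $A$ and $A^{\prime}$ induces a triangle equivalence $\smod\ms{D}^{\rm b}(\mod A)\simeq \smod\ms{D}^{\rm b}(\mod A^{\prime})$.

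Set $\mc{T}:=\ms{D}^{\rm b}(\mod A)$ and $\mc{T}^{\prime}:=\ms{D}^{\rm b}(\mod A^{\prime})$, and let $F:\mc{T}\xto{\sim}\mc{T}^{\prime}$ be the given triangle equivalence, with quasi-inverse $G$. Precomposition with $G$ gives an exact equivalence of abelian categories $(-)\circ G:\Mod\mc{T}\xto{\sim}\Mod\mc{T}^{\prime}$ which, by Yoneda's lemma, carries the representable functor $\mc{T}(-,X)$ to $\mc{T}^{\prime}(-,FX)$. Consequently, it restricts to equivalences $\proj\mc{T}\xto{\sim}\proj\mc{T}^{\prime}$ and, since membership in $\mod\mc{T}$ is characterised by the existence of a projective resolution, to an equivalence $\mod\mc{T}\xto{\sim}\mod\mc{T}^{\prime}$.

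Since $A$ and $A^{\prime}$ are hereditary, $\mc{T}$ and $\mc{T}^{\prime}$ are $\Hom$-finite, idempotent complete $k$-linear categories equipped with Serre functors, and their morphism spaces fit into triangles, so they have weak kernels; hence by Proposition \ref{dualizing-Serre-Frobenius} they are dualizing $k$-varieties satisfying $\mod\mc{T}=\ms{GP}\mc{T}$ and $\mod\mc{T}^{\prime}=\ms{GP}\mc{T}^{\prime}$. By Proposition \ref{GP-Frobenius}, both categories are Frobenius, with projective--injective objects precisely the finitely generated projective functors. Since the equivalence $\mod\mc{T}\xto{\sim}\mod\mc{T}^{\prime}$ matches $\proj\mc{T}$ with $\proj\mc{T}^{\prime}$, it is an exact equivalence of Frobenius categories and therefore descends to a triangle equivalence $\smod\mc{T}\xto{\sim}\smod\mc{T}^{\prime}$.

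Chaining the three triangle equivalences yields $\ms{D}^{\rm b}(\mod(\smod A))\simeq\ms{D}^{\rm b}(\mod(\smod A^{\prime}))$, which is exactly the derived equivalence of $\smod A$ and $\smod A^{\prime}$ required. No step presents a genuine obstacle; the mild point of attention is verifying that the construction $\mc{T}\mapsto\mod\mc{T}$, equipped with its Frobenius structure, is functorial in triangle equivalences of $\mc{T}$, but this is immediate once one notes that the projective--injective objects are characterised intrinsically as the representable functors, which are preserved by precomposition with a quasi-inverse.
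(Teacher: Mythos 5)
Your proof is correct and is exactly the argument the paper leaves implicit: the corollary is stated without proof because, once Theorem \ref{thm-hered-stable-derived} is in hand, the only remaining step is the functoriality of $\mc{T}\mapsto\smod\mc{T}$ under triangle equivalences, which you verify carefully via precomposition with a quasi-inverse and the intrinsic characterisation of the projective--injective objects. Your detour through Proposition \ref{dualizing-Serre-Frobenius} to establish the Frobenius structure on $\mod\mc{T}$ is slightly more than strictly necessary (the paper already cites \cite{Buchweitz, Orlov} for $\smod\mc{T}\simeq\ms{D}_{\rm sg}(\mc{T})$, which presupposes this Frobenius structure), but it stays entirely within the tools developed in the paper and is perfectly sound.
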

\begin{remark}
If $A$ is a representation finite hereditary algebra, then Theorems \ref{thm-rep-derived}, \ref{thm-hered-stable-derived} and Corollary \ref{cor} were shown by \cite{IO}.
\end{remark}
\section*{Acknowledgements}
The author is supported by Grant-in-Aid for JSPS Fellowships 15J02465.
He would like to thank his supervisor Osamu Iyama for many supports and helpful comments.

\end{document}